\documentclass[11pt,a4paper,reqno,english]{amsart}
\title[]{Uniform resolvent estimates for magnetic operators}
% b_f_pre
% \PassOptionsToPackage{style=draft}{biblatex}%
% \PassOptionsToPackage{papersize={14cm,20cm},margin={3mm,5mm}}{geometry}
% \usepackage[inline]{showlabels}\showlabels{ref}\showlabels{bibitem}%
% \renewcommand{\showlabelsetlabel}[1]%
% {\showlabelfont\textcolor{magenta}{#1}}%
\usepackage{%
    amssymb%
    ,graphicx%
    ,mathrsfs%
    ,eucal%
    ,enumitem%
    ,geometry%
    ,esint}
\usepackage[%
    pdfauthor={Piero D'Ancona}%
    ,colorlinks=true%
    ,linkcolor=magenta%
    ,citecolor=cyan%
    ,urlcolor=blue%
    ,hyperfootnotes=false%
]{hyperref}

%%% >>> DEFINITIONS
% \DeclareMathOperator{\supp}{supp}

\newcommand{\bra}[1]{\langle #1 \rangle}
\newcommand{\one}[1]{\mathbf{1}_{#1}}

%%% >>> prodotto scalare piu' carino: bigdot

%%% >>> math dentro titoli section - per hyperref
\newcommand{\pdfmath}[1]{\texorpdfstring{$#1$}{}}

%%% >>> THEOREMS
\numberwithin{equation}{section}
\newtheorem{theorem}{Theorem}[section]
\newtheorem{corollary}[theorem]{Corollary}
\newtheorem{lemma}[theorem]{Lemma}
\newtheorem{proposition}[theorem]{Proposition}
\theoremstyle{remark}
\newtheorem{remark}[theorem]{Remark}

\theoremstyle{definition}
\newtheorem{definition}[theorem]{Definition}
\newtheorem{example}[theorem]{Example}

%%% >>> DATA - \date{} to omit date
\date{\today}

%%% >>> AUTORI
\author[P.~D'Ancona]{Piero D'Ancona}
\address{Piero D'Ancona:
Dipartimento di Matematica\\
Sapienza Universit\`{a} di Roma\\
Piazzale A.~Moro 2\\
00185 Roma\\
Italy}
\email{dancona@mat.uniroma1.it}
\author[Z.~Yin]{Zhiqing Yin}
\address{Zhiqing Yin:
Department of Mathematics,
Beijing Institute of Technology,
Beijing 100081\\
and
Dipartimento di Matematica\\
Sapienza Universit\`{a} di Roma\\
Piazzale A.~Moro 2\\
00185 Roma\\
Italy}
\email{zhiqingyin@bit.edu.cn, zhiqing.yin@uniroma1.it}

%\urladdr{www.mat.uniroma1.it/people/dancona}
% aggiungi indirizzo

%%% >>> RINGRAZIAMENTI
\thanks{%
The authors are partially supported by the MIUR PRIN project 2020XB3EFL, ``Hamiltonian and Dispersive PDEs'', 
by the Progetto Ricerca Scientifica 2024
``Wave dynamics in heterogeneous media'' of Sapienza University,
and by the Gruppo Nazionale per l'Analisi Matematica, 
la Probabilit\`{a} e le loro Applicazioni (GNAMPA)}%
%%% >>> MATH.SUBJ.CLASSIF.--> http://www.ams.org/msc/
\subjclass[2020]{%
35J10, 35P25, 42B37}
%%% >>> KEYWORDS
\keywords{%
Uniform resolvent estimates,
magnetic Schr\"{o}dinger operators,
Spectral measure}

\begin{document}

\begin{abstract}
  We prove Kenig--Ruiz--Sogge type uniform resolvent estimates
  for selfadjoint magnetic Schr\"{o}dinger operators
  $H=(i\partial+A(x))^2+V(x)$ on $\mathbb{R}^{n}$,
  $n\ge3$. Under suitable decay assumptions on the
  electric and magnetic potentials, and excluding a threshold
  resonance at zero, we show that for all
  $z \in \mathbb{C}\setminus[0,+\infty)$,
  \begin{equation*}
    \|(H-z)^{-1}\phi\|_{L^{q}}\lesssim|z|^{\theta(p,q)}
    (1+|z|^{\gamma})
    \|\phi\|_{L^{p}}
  \end{equation*}
  throughout the full free resolvent range
  $(\frac1p,\frac1q)\in\Delta(n)$, where
  $\theta(p,q)=\frac n2(\frac1p-\frac1q)-1$.
  Here $\gamma=\frac 12\frac{n-1}{n+1}$ under the basic
  magnetic decay hypothesis, or
  $\gamma=\frac{n-1}{4n}$ under a different decay assumption
  on $A(x)$; for the second case we use a weak endpoint estimate
  of Frank--Simon type
  \begin{equation*}
    \|R_{0}(z)\phi\|
      _{L^{\frac{2n}{n-1},\infty}_{r}L^{2}_{\omega}}
    \lesssim
    |z|^{-\frac12}
    \|\phi\|_{L^{\frac{2n}{n+1},1}_{r}L^{2}_{\omega}}.
  \end{equation*}
  The result extends the
  known electromagnetic estimates from fixed frequency
  and a smaller exponent region to all frequencies and the
  full Kenig--Ruiz--Sogge range. We also prove a variant with
  weaker local assumptions in a smaller range $\Delta_1(n)$.
  As applications, we obtain $L^p-L^{p'}$ restriction type
  estimates for the density of the spectral measure of magnetic
  Schr\"{o}dinger operators, and an eigenvalue enclosure result 
  for complex scalar perturbations.
\end{abstract}

\maketitle

%%% >>> INDEX (toc)
%\tableofcontents

% e_f_pre  <<<<<<<<< PREAMBOLO

\section{Introduction}\label{sec:intr}

Uniform resolvent estimates are a quantitative form of the limiting
absorption principle in suitable scales of Banach spaces. 
They represent a meeting point of harmonic analysis and spectral 
theory:
the same estimates underlie unique continuation, restriction and
spectral measure bounds, spectral multiplier theorems, dispersive
estimates, and eigenvalue bounds for nonselfadjoint
perturbations.  The model case is the free resolvent on
$\mathbb{R}^{n}$, $n\ge1$,
\begin{equation*}
  R_{0}(z)=(-\Delta-z)^{-1},
\end{equation*}
which is defined as a bounded operator on $L^{2}(\mathbb{R}^{n})$
for $z\not\in\sigma(-\Delta)=[0,\infty)$;
when approaching the spectrum, the bound degenerates as
$\|R_{0}(z)\|_{L^{2}\to L^{2}}\simeq d(z,\sigma(-\Delta))^{-1}$.
The point of the Kenig--Ruiz--Sogge theory is this $L^{2}$ loss
can be removed by changing the source and target spaces
to $L^{p}$--$L^{q}$ spaces, obtaining bounds which remain
uniform up to
the continuous spectrum.  For magnetic Schr\"{o}dinger operators
of the form
$H=(i\partial+A)^{2}+V$ this question is especially delicate,
due to the presence of first order terms
$2iA\cdot\partial$.  The main purpose of this
paper is to recover, for magnetic Hamiltonians, the free
Kenig--Ruiz--Sogge exponent range as far as the first order
terms allow.

Resolvent estimates in Lebesgue spaces were first investigated in
\cite{KenigRuizSogge87-a},
\cite{KatoYajima89-a}
and completed in
\cite{CarberySoria88-a},
\cite{RuizVega93-a},
\cite{Gutierrez04-a},
\cite{FrankSimon17-a},
\cite{RenXiZhang18-a},
\cite{KwonLee20-a}.
In the basic formulation, they take the form
\begin{equation}\label{eq:URER0}
  \|R_{0}(z)\phi\|_{L^{q}}\lesssim
  |z|^{\frac 12(\frac np-\frac nq)-1}
  \|\phi\|_{L^{p}}
\end{equation}
for all $n\ge2$ and $(\frac 1p,\frac 1q)$ belonging to
the set $\Delta(n)\subset[0,1]^{2}$, defined as
\begin{equation*}
  \textstyle
  \Delta(n)=
  \left\{(\frac1p,\frac1q)\in [0,1]^2:
  \frac{2}{n+1}\leq\frac1p-\frac1q\leq\frac2n,\
  \frac1p>\frac{n+1}{2n},\ \frac1q<\frac{n-1}{2n}
  \right\}
  \setminus\{(1,0)\}.
\end{equation*}
In the picture below, $\Delta(n)$ is the quadrilateral
$ABB'A'$, including the open segments $BB'$, $AA'$ but with
the closed segments $AB$, $B'A'$ removed
(in dimension $2$ the set $\Delta(2)$ degenerates to the 
pentagon $BXZX'B'$). The estimate is valid also along the
sides $AB$ and $A'B'$, but in the weak version
$L^{p,1}\to L^{q,\infty}$.
\begin{figure}[h] % h/ere, t/op, b/ottom, p/age, H=float
  \centering
  \includegraphics[width=13.3cm]{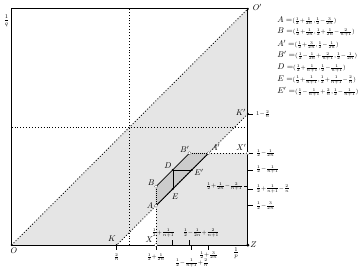} 
\end{figure}

For later use we introduce a second set
\begin{equation*}
  \textstyle
  \Delta_{1}(n)=
  \{(\frac 1p,\frac 1q)\in[0,1]^{2}:
  \frac1p-\frac1q\le \frac 2n,\quad
  \frac 1p\ge \frac 12+\frac{1}{n+1},\quad
  \frac 1q\le \frac 12-\frac{1}{n+1}
  \}=DEE'.
\end{equation*}
Note that the points $A= (\frac{1}{p_{A}},\frac{1}{q_{A}})$
and $A'= (\frac{1}{p_{A'}},\frac{1}{q_{A'}})$ are dual to
each other, i.e.~$p_{A'}=q_{A}'$, $q_{A'}=p_{A}'$;
likewise for $B,B'$ and $E,E'$.
In Section \ref{sec:free_case}, we discuss these estimates at
length and give some minor improvements at endpoints.  The
important fact is that $\Delta(n)$ is the
free exponent geometry against which the perturbative problem
should be measured.

A natural question is what part of the theory for the free 
case survives under lower order perturbations.  For scalar
electric potentials this problem is well developed.
Focusing on Kenig--Ruiz--Sogge type results,
in \cite{Mizutani20-b} estimate \eqref{eq:URER0} was extended
in dimensions $n\ge3$ to Schr\"{o}dinger operators of the form
\begin{equation*}
  H=-\Delta+V(x),
\end{equation*}
provided $V(x)$ is in the space 
$L^{n/2,\infty}_{0}(\mathbb{R}^{n})$, 
defined as the closure of
$C_{c}^{\infty}(\mathbb{R}^{n})$ in the Lorentz norm
$L^{n/2,\infty}$.  This result is a natural benchmark for
large electric perturbations: it recovers the free uniform
Sobolev range under a scale critical assumption and derives
applications to Strichartz estimates, spectral multiplier
theorems, and eigenvalue bounds. See the references in 
\cite{Mizutani20-b} for
earlier scalar potential results.

Another important
theory was developed by Ionescu and Schlag
\cite{IonescuSchlag06-a},
who prove an Agmon--Kato--Kuroda
theorem for a broad class of admissible perturbations; in
particular their class includes first order differential
perturbations of the form
$a\cdot\nabla-\nabla\cdot\overline{a}$.
The limiting absorption principle in \cite{IonescuSchlag06-a}
is formulated in a different Banach scale, adapted to scattering
and absence of singular spectrum, rather than as a
Kenig--Ruiz--Sogge $L^p-L^q$ estimate.
Nevertheless, this is a central comparison point,
showing that Stein--Tomas methods
can be used to treat first order perturbations.

There is also a substantial geometric branch of the same problem.
Uniform Sobolev estimates have been proved for nontrapping
asymptotically conic metrics \cite{GuillarmouHassell14-a}, and
on compact Riemannian manifolds the corresponding resolvent
problem is connected with spectral cluster estimates and the
distribution of eigenvalues, see for instance
\cite{DosSantosFerreiraKenigSalo14-a},
\cite{BourgainShaoSoggeYao15-a},
\cite{ShaoYao14-a},
\cite{FrankSchimmer17-a}.
These results are parallel to the Euclidean setting
considered here, but do not replace it; compactness, spectral
clusters, and metric geometry introduce a different set of
obstructions from those produced by electromagnetic first order
terms in $\mathbb{R}^{n}$.

For selfadjoint
magnetic operators
\begin{equation}\label{eq:oper}
  H=(i \partial+A(x))^{2}+V(x)
\end{equation}
with real valued electromagnetic potentials $A,V$, the
Kenig--Ruiz--Sogge $L^p-L^q$ problem is much less developed.
In this direction, the closest predecessor in
the $L^p$--$L^q$ framework is due to Garcia.  
In \cite{Garcia15-a} (see also \cite{Garcia11-a}),
the operator \eqref{eq:oper} is
considered in dimension $n\ge3$ under
the assumptions: $\partial \cdot A=0$ and
for some $\delta>0$
\begin{equation}\label{eq:AVgarc15}
  |V(x)|\lesssim
    |x|^{-\frac 32-\frac{1}{n+1}+\delta}+|x|^{-1-\delta},
  \qquad
  |A(x)|\lesssim\min\{
    |x|^{-\frac 12-\frac{1}{n+1}+\delta},
    |x|^{-1-\delta}
  \}
\end{equation}
while for the magnetic field
$B(x)=[\partial_{j}A_{k}-\partial_{k}A_{j}]$
the assumptions are
\begin{equation}\label{eq:Bgarc15}
  |x^{T}B(x)|\lesssim|x|^{-\delta}
  \quad \text{if}\ |x|\ge 1,
  \qquad
  |B(x)|\lesssim{|x|}^{-2+\delta}
  \quad \text{if}\ |x|\le 1.
\end{equation}
Then $R(z)=(H-z)^{-1}$ satisfies
\begin{equation}\label{eq:gar15}
  \textstyle
  \|R(1\pm i \epsilon)\phi\|_{L^{q}}\lesssim
  \|\phi\|_{L^{p}}
  \quad\text{provided}\quad
  (\frac 1p,\frac 1q)\in \Delta_{1}(n),
  \quad
  n\ge3.
\end{equation}
Compared with \eqref{eq:URER0}, we see that
both the frequency $z$ and the range of $p,q$
are restricted.
In dimension $n=2$, the Aharonov--Bohm operator
\begin{equation*}
  H_{AB}=(i\partial+A(x))^{2},
  \qquad
  A(x)=\alpha
  \left(-\frac{x_{2}}{|x|^{2}},\frac{x_{1}}{|x|^{2}}\right),
  \qquad
  \alpha\in \mathbb{R}
\end{equation*}
was studied in
\cite{FanelliZhangZheng23-a},
where the uniform resolvent estimate was proved
for the same range of parameters as in the free case.
Since $H_{AB}$ has the same scaling as $-\Delta$, the estimate
is valid for all frequencies $z$ as for \eqref{eq:URER0}.
See also \cite{CueninKenig17-a} for some partial results in the
case of unbounded potentials.  These results leave open the
following Euclidean magnetic analogue of the scalar potential
theory: can one prove resolvent estimates for \eqref{eq:oper}
throughout the free Kenig--Ruiz--Sogge range, 
with explicit dependence on the frequency?

Under the assumptions below, our result recovers the full
free region $\Delta(n)$ including the weak endpoint bounds,
and in this sense it is sharp at
the level of exponent geometry.  We do not claim that either the
decay and regularity hypotheses on $A,V$, or the additional
high frequency loss in the estimate, are optimal.  In particular,
the theorem below removes both the fixed frequency restriction
and the smaller exponent region in \cite{Garcia15-a}, at the
price of the additional high frequency factor in \eqref{eq:UREH}.

We stress that
uniform resolvent estimates are useful beyond the mapping
property itself, connecting elliptic resolvent analysis with
restriction type bounds for the density of the spectral measure,
spectral multiplier and dispersive estimates, and eigenvalue
bounds or spectral enclosures for nonselfadjoint perturbations;
see for instance
\cite{FrankSimon17-a,HuangYaoZheng18-a,Mizutani20-b}.  In the
present paper we record two representative consequences:
spectral measure estimates in the full range allowed by our
resolvent theorem, and a spectral enclosure for complex scalar
perturbations.

\bigskip
\textbf{Assumption (H)}.
Let $n\ge3$, $\delta>0$, $\mu>1$,
$w(x)=\bra{\log|x|}^{\mu}\bra{x}^{\delta}$.
The operator $H$ in \eqref{eq:oper}
is selfadjoint with domain $H^{2}(\mathbb{R}^{n})$, 
nonnegative, and
\begin{equation}\label{eq:VAass}
  w(x)|x|^{2}
  (V+i \partial \cdot A)
  \in L^{\infty},
  \quad
  w(x)x^{T}B\in L^{\infty},
  \quad
  w(x)|x| A\in L^{\infty}\cap\dot H^{1/2}_{2n}
\end{equation}
where
$B=[\partial_{j}A_{k}-\partial_{k}A_{j}]$
is the magnetic field. Moreover, 0 is not a resonance for $H$.

\bigskip

\begin{definition}[Resonance]\label{def:reson}
  $0$ is a \emph{resonance} for $H$ if
  $\exists v\in H^{2}_{loc}(\mathbb{R}^{n}\setminus0)
    \cap H^{1}_{loc}$,
  $v \neq0$ distributional solution of $Hv=0$, with the properties
  \begin{equation}\label{eq:resona}
    |x|^{\frac n2-2-\sigma}v\in L^{2}
    \quad\text{and}\quad
    |x|^{\frac n2-1-\sigma}\partial v\in L^{2}
    \qquad \forall \sigma\in(0,\sigma_{0}).
  \end{equation}
  for some $\sigma_{0}>0$.
  Then $v$ is called a \emph{resonant state}
  at 0 for $H$.
  (If $n\ge5$ this condition reduces to 0
  being an eigenvalue of $H$).
\end{definition}

Our main result is the following:

\begin{theorem}\label{the:2}
  Let $n\ge3$.
  Assume the operator $H=(i \partial+A(x))^{2}+V(x)$ satisfies
  \textbf{(H)}, and in addition
  \begin{equation}\label{eq:assA2}
    \textstyle
    \one{|x|\ge1}|x|^{1/2}A\in L^{s}
    \quad\text{for some}\quad
    s<\frac{2n(n+1)}{3n-1}.
  \end{equation}
  Then for all $(\frac 1p,\frac 1q)\in\Delta(n)$
  the following estimate holds, with
  $\theta(p,q)=\frac n2(\frac{1}{p}-\frac 1q)-1$:
  \begin{equation}\label{eq:UREH}
    \|(H-z)^{-1}\phi\|_{L^{q}}\lesssim
    |z|^{\theta(p,q)}
      (1+|z|^{\frac 12 \frac{n-1}{n+1}})
    \|\phi\|_{L^{p}},
    \qquad
    z\in \mathbb{C}\setminus[0,+\infty).
  \end{equation}
  Moreover, for the same values of $z$ and for
  $(\frac 1p,\frac 1q)\in AB$, we have the weak type estimate
  \begin{equation}\label{eq:UREweak}
    \|R(z)\phi\|_{L^{q,\infty}}\lesssim
    |z|^{\theta(p_{B},q)}
      (1+|z|^{\frac 12 \frac{n-1}{n+1}})
    \|\phi\|_{L^{p_{B},1}}
  \end{equation}
  and a corresponding dual estimate if
  $(\frac 1p,\frac 1q)\in A'B'$.
  If, instead of \eqref{eq:assA2}, one assumes
  \begin{equation}\label{eq:assA2FS}
    \one{|x|\ge1}|x|^{1+\frac{n-1}{2n}+\delta}A
    \in L^{\infty}
    \qquad\text{for some}\quad \delta>0,
  \end{equation}
  then the same conclusions \eqref{eq:UREH} and
  \eqref{eq:UREweak} hold with the factor
  $1+|z|^{\frac 12\frac{n-1}{n+1}}$ replaced by
  \begin{equation*}
    1+|z|^{\frac{n-1}{4n}}.
  \end{equation*}
\end{theorem}

\begin{example}[]\label{exa:assH}
  Assume $V,A$ satisfy the pointwise estimates, for some
  $\delta>0$,
  \begin{equation}\label{eq:assdec}
    |V(x)|+|\partial A|\lesssim 
    \frac{1}{|x|^{2+\delta}+|x|^{2-\delta}},
    \qquad
    |A(x)|+|\widehat{x}^{T}B(x)|\lesssim
    \frac{1}{|x|^{1+\delta}+|x|^{1-\delta}}
  \end{equation}
  where $\widehat{x}=\frac{x}{|x|}$,
  and $H\ge0$ with no resonance at 0. Then Assumption
  \textbf{(H)} is satisfied. Assume in addition that
  \begin{equation}\label{eq:addiass}
    |A(x)|\lesssim
    |x|^{-1-\frac{n-1}{2n}-\delta}
    \quad\text{for}\quad |x|\ge1.
  \end{equation}
  Then \eqref{eq:assA2FS} holds and
  Theorem \ref{the:2} applies.  Hence
  $R(z)$ satisfies \eqref{eq:UREH} for the entire set of indices
  $\Delta(n)$ and all frequencies $z$, with
  $1+|z|^{\frac 12\frac{n-1}{n+1}}$ replaced by
  $1+|z|^{\frac{n-1}{4n}}$.
\end{example}

\begin{remark}[The Frank--Simon alternative]\label{rem:FSref}
  The two alternatives \eqref{eq:assA2} and \eqref{eq:assA2FS}
  enter only in the estimate of the term
  $R_{0}a\cdot\partial R_{1}$ in the proof of
  Theorem \ref{the:2}.  In pointwise terms, \eqref{eq:assA2}
  corresponds at infinity to the model decay
  \begin{equation*}
    |A(x)|\lesssim |x|^{-\frac{2n}{n+1}-\delta},
  \end{equation*}
  while the natural threshold is $|A(x)|\lesssim|x|^{-1-}$.
  For a better treatment of this frequency localized term,
  we prove in Theorem \ref{the:FSendpointa} the following
  weak endpoint Frank--Simon estimate in mixed radial--angular
  norms
  \begin{equation*}
    \|R_{0}(z)\phi\|
      _{L^{\frac{2n}{n-1},\infty}_{r}L^{2}_{\omega}}
    \lesssim
    |z|^{-\frac12}
    \|\phi\|_{L^{\frac{2n}{n+1},1}_{r}L^{2}_{\omega}}
  \end{equation*}
  for all $n\ge2$, $z\in \mathbb{C}\setminus(0,+\infty)$,
  which may be of independent interest.
  This allows us to improve the above pointwise decay to
  \eqref{eq:assA2FS}, that is to say
  \begin{equation*}
    |A(x)|\lesssim 
    |x|^{-\frac{3n-1}{2n}-\delta},
    \qquad |x|\ge1.
  \end{equation*}
  Under this replacement no other 
  part of the proof changes, and in addition
  the high-frequency loss reduces to
  $|z|^{\frac{n-1}{4n}}$ instead of
  $|z|^{\frac 12\frac{n-1}{n+1}}$.

  If we drop the additional decay \eqref{eq:assA2}
  or \eqref{eq:assA2FS}, the same argument gives
  the estimate
  \begin{equation}\label{eq:scaleinv}
    \|R(z)\phi\|_{L^{q,\infty}}\lesssim
    |z|^{\theta(p_{B},q)}
    \|\phi\|_{L^{p_{B},1}_{r}L^{2}_{\omega}}
  \end{equation}
  which is weaker in terms of the norms,
  but scale invariant in terms of the frequency.
\end{remark}

Compared with the earlier estimate \eqref{eq:gar15} of
\cite{Garcia15-a}, Theorem \ref{the:2} improves the picture in
three directions.  First, the estimate is valid for all
frequencies $z$, with an explicit dependence on $z$.  Second, it
recovers the free Kenig--Ruiz--Sogge exponent region
$(\frac 1p,\frac 1q)\in\Delta(n)$.  Third, it includes the weak
endpoint estimates proved in \cite{RenXiZhang18-a} for the
unperturbed resolvent, hence the theorem is sharp in the sense of
the exponent range.  

The assumptions on the electromagnetic
potentials, and the necessity of an additional
high-frequency factor in \eqref{eq:UREH}
are separate questions.
It is still unclear what are the optimal decay and
regularity assumptions on $A,V$ required for the estimate to
hold. In \cite{Garcia15-a}, the decay assumption 
\eqref{eq:addiass} is replaced by a less singular behaviour 
near 0, see \eqref{eq:AVgarc15}, provided the set of indices is
restricted to $\Delta_{1}(n)=DEE'$ and the frequency
is restricted to $|z|=1$. 
We can extend this result to all frequencies as follows.

\begin{theorem}\label{the:1}
  Let $n\ge3$.
  Assume the operator $H=(i \partial+A(x))^{2}+V(x)$
  is selfadjoint nonnegative, 0 is not a resonance,
  $A,V$ satisfy \eqref{eq:assdec}, 
  and in addition, for some $\delta>0$,
  \begin{equation}\label{eq:assdec2}
    |A(x)|+|x||V(x)|\lesssim |x|^{-\frac 12-\frac{1}{n+1}+\delta}
    \quad\text{for}\quad |x|\le1.
  \end{equation}
  Then for all $(\frac 1p,\frac 1q)\in \Delta_{1}(n)$
  the following estimate holds:
  \begin{equation}\label{eq:garcia}
    \|R(z)\phi\|_{L^{q}}\lesssim
    |z|^{\theta(p,q)}(1+|z|^{-\frac 12})
    \|\phi\|_{L^{p}},
    \qquad
    z\in \mathbb{C}\setminus[0,\infty).
  \end{equation}
\end{theorem}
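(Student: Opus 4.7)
\emph{Proof plan.} The goal is to extend García's theorem \cite{Garcia15-a} — which gives \eqref{eq:garcia} at $|z|=1$ under the hypotheses \eqref{eq:AVgarc15}, \eqref{eq:Bgarc15} — to all $z\in\mathbb{C}\setminus[0,\infty)$ with $|\Im z|\le 1$. A key observation is that \eqref{eq:assdec} and \eqref{eq:assdec2} are designed to imply García's hypotheses: the near-origin exponent $-\frac{1}{2}-\frac{1}{n+1}+\delta$ in \eqref{eq:assdec2} matches exactly García's near-origin requirement on $A$, while \eqref{eq:assdec} supplies the required $|x|^{-1-\delta}$ decay at infinity together with the associated bounds on $V$ and $\widehat{x}^{T}B$. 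Consequently \eqref{eq:garcia} holds at any $|z_0|=1$ with $|\Im z_0|\le 1$ as the starting point.

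For general $z$, the plan is to combine scaling with a perturbative argument. Set $\lambda=|z|^{1/2}$ and consider the rescaled operator $H_\lambda=(i\partial+A_\lambda)^2+V_\lambda$ with $A_\lambda(y)=\lambda^{-1}A(y/\lambda)$ and $V_\lambda(y)=\lambda^{-2}V(y/\lambda)$. The dilation $u(x)=v(\lambda x)$ conjugates $H-z$ to $\lambda^{2}(H_\lambda-z/\lambda^{2})$, and the $L^{p}$-$L^{q}$ change-of-variables identity yields
\begin{equation*}
\|R(z)\|_{L^{p}\to L^{q}}=|z|^{\theta(p,q)}\,\|R_{H_\lambda}(z/\lambda^{2})\|_{L^{p}\to L^{q}}.
\end{equation*}
For $|z|\ge 1$, using the explicit exponents in \eqref{eq:assdec} and \eqref{eq:assdec2}, one verifies that the rescaled potentials $A_\lambda$, $V_\lambda$, $B_\lambda$ still satisfy García's hypotheses with constants uniformly bounded in $\lambda\ge 1$ — the matching of the $\delta$-gaps in the two decay regimes is exactly what makes this possible. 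Applying García's theorem to $H_\lambda$ at the frequency $z/\lambda^{2}$ of modulus $1$ (whose imaginary part is $\le\lambda^{-2}\le 1$) then gives $\|R_{H_\lambda}(z/\lambda^{2})\|_{L^{p}\to L^{q}}\lesssim 1$, producing the $|z|^{\theta(p,q)}$ contribution in \eqref{eq:garcia}.

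For $|z|\le 1$, the rescaled potentials develop a near-origin singularity not covered by García's hypotheses, so the scaling argument breaks down. Instead one combines García's estimate at a reference frequency $|z_1|=1$ with the resolvent identity $R(z)=R(z_1)+(z-z_1)R(z)R(z_1)$, iterated into a Neumann series closed using the weighted $L^{2}$ limiting absorption estimates that follow from the decay assumptions together with the non-resonance hypothesis at $0$; careful tracking of the $|z|$-dependence in this expansion produces the extra $|z|^{-1/2}$ factor in \eqref{eq:garcia}. The principal technical obstacle is the uniform verification in the high-frequency regime that García's hypotheses survive the scaling, which requires matching the $\delta$-gaps of \eqref{eq:assdec} and \eqref{eq:assdec2} and carefully accounting for the rescaled behavior of $A_\lambda$ in each radial region; any weaker near-origin decay than \eqref{eq:assdec2} would introduce a divergent power of $\lambda$ and invalidate the argument. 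The low-frequency perturbative expansion is the second delicate step, requiring quantitative control of the resolvent at the bottom of the spectrum.
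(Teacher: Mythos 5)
The approach you propose is fundamentally different from the paper's, and it has a genuine gap in the high-frequency regime that invalidates the scaling step.

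You want to apply García's theorem to the rescaled operator $H_\lambda=(i\partial+A_\lambda)^2+V_\lambda$ with $A_\lambda(y)=\lambda^{-1}A(y/\lambda)$, $\lambda=|z|^{1/2}\ge 1$, claiming that $A_\lambda$ satisfies García's hypotheses \eqref{eq:AVgarc15} uniformly in $\lambda$. This is false. At the transition radius $|y|=\lambda$ (i.e.~$|y/\lambda|=1$) the original assumption only gives $|A(y/\lambda)|\lesssim 1$, hence $|A_\lambda(y)|\lesssim\lambda^{-1}$, while García's far-field hypothesis at $|y|=\lambda\ge 1$ requires $|A_\lambda(y)|\lesssim|y|^{-1-\delta'}=\lambda^{-1-\delta'}$ for some $\delta'>0$. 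The implied constant therefore grows like $\lambda^{\delta'}$, so García's theorem does not yield a bound uniform in $\lambda$, and the $|z|^{\theta(p,q)}$ conclusion you claim for $|z|\ge1$ does not follow. (This is not an accident of your bookkeeping: $\delta'=0$ is the only exponent compatible with uniformity, and García needs $\delta'>0$. You also cannot invoke García verbatim at $|z|=1$ because his theorem assumes $\partial\cdot A=0$, which Theorem \ref{the:1} does not.) The low-frequency part of your plan is also unsupported: the Neumann series built from $R(z)=R(z_1)+(z-z_1)R(z)R(z_1)$ has no reason to converge in $L^p\to L^q$ operator norm when $|z-z_1|$ is of order $1$, and composing $R(z_1)$ with itself does not map $L^p$ back into $L^p$; it is unclear how the $|z|^{-1/2}$ factor would emerge from this expansion.

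The paper's proof does not rescale the perturbed operator at all. It fixes $H$ and works directly with the Lippmann--Schwinger--type decomposition $R=R_0-R(a\cdot\partial+b)R_0$ (with $a=2iA$, $b=i\partial\cdot A+|A|^2+V$), splits the free resolvent $R_0=R_1+R_2$ (low/high frequency), and controls each piece via the weighted dyadic $L^2$ limiting-absorption bound of Lemma \ref{lem:resolvestR} (from \cite{DAncona20}) and the free estimates of Section \ref{sec:free_case}. A key intermediate step is Lemma \ref{lem:gutR}, which proves the analogue of \eqref{eq:weiR0} for the perturbed resolvent. Scaling is only applied to $R_0$, $R_1$, $R_2$, whose symbols scale trivially; the explicit powers $|z|^{\mu_1}$, $|z|^{\mu_2}$ and the extra factor $1+|z|^{-1/2}$ emerge from tracking the frequency dependence in the free-resolvent estimates, not from a Neumann expansion in $z$. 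You would need to replace the scaling-of-$H$ step by a direct perturbative argument of this kind to close the proof.
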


\begin{remark}[]\label{rem:potV1}
  In Theorem \ref{the:1} the potential $V(x)$
  decays as $|x|^{-2-\delta}$ for large $x$,
  while in the result of \cite{Garcia15-a} a slower
  decay $|V(x)|\lesssim |x|^{-1-\delta}$ is admitted.
  It is not difficult to extend our
  Theorem \ref{the:1} to cover this case and prove the 
  following estimate: for every $\epsilon_{0}>0$
  there exists a continuous function $C(\epsilon_{0},z)$
  such that 
  \begin{equation}\label{eq:garcia2}
    \textstyle
    \|\widetilde{R}(z)\phi\|_{L^{q}}\le
    C(\epsilon_{0},z)
    \|\phi\|_{L^{p}},
    \qquad
    |z|\ge \epsilon_{0},
    \quad
    |\Im z|\le1,
    \qquad
    (\frac 1p,\frac 1q)\in \Delta_{1}(n).
  \end{equation}
  In this case, the assumption that 0 is not a resonance
  can be dropped.
  We sketch a proof of this extension in 
  Section \ref{sec:rem}.
\end{remark}

As previously mentioned,
uniform resolvent estimates are useful well beyond the estimate
itself.  In the free and scalar potential settings they are tied
to unique continuation, spectral multiplier theorems,
Bochner--Riesz estimates, Strichartz estimates, smoothing and
local energy decay, and eigenvalue bounds for nonselfadjoint
perturbations; see for instance
\cite{KenigRuizSogge87-a},
\cite{HuangYaoZheng18-a},
\cite{FrankSimon17-a},
\cite{Mizutani20-b}.
In the present magnetic setting we focus on the first of these
consequences that follows directly from the resolvent theorem:
a restriction type estimate for the spectral measure
$dE_{H}(\lambda)=E'_{H}(\lambda)d \lambda$
of the selfadjoint operator $H$.
Denoting by $E'_{H}(\lambda)$ its density,
for the Euclidean Laplacian we have the bound
\begin{equation}\label{eq:restrfree}
  \textstyle
  \|E'_{-\Delta}(\lambda)\phi\|_{L^{p'}}\leq 
  C \lambda^{\theta(p,p')}\|\phi\|_{L^{p}},
  \qquad
  \frac 12+\frac{1}{n+1}\le\frac 1p\le1. 
\end{equation}
Since $E'_{-\Delta}$ can be written in terms of the
restriction operator on the sphere $R_{r}$ as
$E'_{-\Delta}(\lambda^{2})=
  (2\pi)^{-n}\lambda^{n-1}R^{*}_{\lambda} R_{\lambda}$,
by the standard $TT^{*}$ method,
estimate \eqref{eq:restrfree} is equivalent to the
Tomas--Stein restriction theorem for the sphere.
The spectral measure $E'_{-\Delta+V}(\lambda)$ was studied in
\cite{GoldbergSchlag04-a}, \cite{HuangYaoZheng18-a},
\cite{Mizutani20-b}. In Section \ref{sec:restr} we prove
an estimate of the form
\begin{equation*}
  \textstyle
  \|E'_H(\lambda)\phi\|_{L^{p'}}
  \leq C (\lambda)\|\phi\|_{L^{p}},
  \qquad
  \frac 12+\frac{1}{n+1}\le\frac 1p\le \frac 12+\frac3{2n}
\end{equation*}
for the operator \eqref{eq:oper}. To our knowledge,
this is the first result of this type for a magnetic
Schr\"{o}dinger operator.

To complete the Introduction, we show a standard application
of our estimates to the localization of eigenvalues for a
nonselfadjoint perturbation of $H$:

\begin{corollary}[Spectral enclosure]\label{cor:eigen-enclosure}
  Assume the hypotheses of Theorem \ref{the:2}, and set
  $\gamma=\frac12\frac{n-1}{n+1}$.  If \eqref{eq:assA2} is
  replaced by the Frank--Simon alternative \eqref{eq:assA2FS},
  set instead $\gamma=\frac{n-1}{4n}$.  Let
  $W\in L^{r}(\mathbb{R}^{n})$ be complex valued, with
  \begin{equation*}
    \frac n2\le r\le \frac{n+1}{2},
  \end{equation*}
  and let $H_{W}=H+W$ be the corresponding form perturbation.
  If $z\in \mathbb{C}\setminus[0,+\infty)$ is an eigenvalue of
  $H_{W}$, then
  \begin{equation}\label{eq:eigen-enclosure}
    \frac{|z|^{1-\frac n{2r}}}{1+|z|^{\gamma}}
    \lesssim
    \|W\|_{L^{r}}.
  \end{equation}
\end{corollary}

\begin{proof}
  This is the standard Birman--Schwinger argument; see, for
  instance, \cite{FrankSimon17-a,DAnconaFanelliKrejcirikSchiavone22-a}.
  Let
  \begin{equation*}
    p=\frac{2r}{r+1},
    \qquad
    p'=\frac{2r}{r-1}.
  \end{equation*}
  Then $(1/p,1/p')\in\Delta(n)$.  Write
  $W=U|W|$ with $|U|\le1$.  If $z$ is an eigenvalue of
  $H_{W}$, then $-1$ is an eigenvalue of
  \begin{equation*}
    K(z)=|W|^{1/2}(H-z)^{-1}U|W|^{1/2}
  \end{equation*}
  on $L^{2}$, and hence $1\le\|K(z)\|_{L^{2}\to L^{2}}$.
  By H\"{o}lder,
  \begin{equation*}
    \|U|W|^{1/2}f\|_{L^{p}}
    \le
    \|W\|_{L^{r}}^{1/2}\|f\|_{L^{2}},
    \qquad
    \||W|^{1/2}g\|_{L^{2}}
    \le
    \|W\|_{L^{r}}^{1/2}\|g\|_{L^{p'}}.
  \end{equation*}
  Applying \eqref{eq:UREH} with $q=p'$ gives
  \begin{equation*}
    1\lesssim
    \|W\|_{L^{r}}
    |z|^{\theta(p,p')}(1+|z|^{\gamma}).
  \end{equation*}
  Since $\theta(p,p')=\frac n{2r}-1$, this is exactly
  \eqref{eq:eigen-enclosure}.
\end{proof}

It is possible to give different and more precise conditions
on the eigenvalues. For example, using
the scale invariant estimate \eqref{eq:scaleinv} instead of
Theorem \ref{the:2}, one obtains that all eigenvalues of $H_{W}$
are confined in a disk of the form
\begin{equation*}
  |z|
  \lesssim
  \|W\|_{L^{n,1}_rL^\infty_\omega}\,
  \|W\|_{L^{n,1}}.
\end{equation*}
We shall pursue this line of research elsewhere.

Let us finally describe the mechanism of the proof.  We use two
families of free estimates.  The first is the Lebesgue space
Kenig--Ruiz--Sogge theory described above.  More precisely, the
free resolvent is decomposed as $R_{0}=R_{1}+R_{2}$, where
$R_{1}$ contains the singular Fourier restriction part and
$R_{2}$ is controlled by Sobolev embedding.  The second input is
the scale invariant Agmon--H\"{o}rmander theory in dyadic norms.
For $C_{j}=\{x\in \mathbb{R}^{n}:2^{j-1}\le|x|\le2^{j}\}$, set
\begin{equation}\label{eq:dyad1}
  \textstyle
  \|\phi\|_{\ell^{\infty}L^{2}}=
  \sup_{j\in \mathbb{Z}}\|\phi\|_{L^{2}(C_{j})},
  \qquad
  \|\phi\|_{\ell^{1}L^{2}}=
  \sum_{j\in \mathbb{Z}}\|\phi\|_{L^{2}(C_{j})}.
\end{equation}
A sharp free estimate of this kind is
\begin{equation}\label{eq:sharpAH}
  |z|^{\frac 12}
  \||x|^{-\frac12}R_{0}(z)\phi\|_{\ell^{\infty}L^{2}}+
  \||x|^{-\frac12}\partial R_{0}(z)\phi\|_{\ell^{\infty}L^{2}}
  \lesssim
  \||x|^{\frac12}\phi\|_{\ell^{1}L^{2}},
\end{equation}
with an additional surface term
$\sup_{R>0}\||x|^{-1}R_{0}(z)\phi\|_{L^{2}(|x|=R)}$
when $n\ge3$; see e.g.~\cite{CardosoPopovVodev04-a},
\cite{MarzuolaMetcalfeTataru08-a}, \cite{DAncona20}.
The perturbed resolvent is expanded around $R_{0}$ by writing
$H=-\Delta+a\cdot\partial+b$, with $a=2iA$ and
$b=i\partial\cdot A+|A|^{2}+V$.  The zero order terms are
handled by weighted dyadic bounds, while the first order terms
are controlled by combining the free restriction estimates with
the magnetic Agmon--H\"{o}rmander estimate from \cite{DAncona20}.

\section{Estimates for the free resolvent}\label{sec:free_case}

Here we collect, and marginally improve, a few known estimates
from \cite{KenigRuizSogge87-a}, \cite{RuizVega93-a},
\cite{Gutierrez04-a}, \cite{DAncona20}
for the free resolvent
$R_{0}(z)=(-\Delta-z)^{-1}$ which are needed in the following.
We summarize the available estimates for $R_{0}$ in 
Section \ref{sub:boun_full_reso}.

We shall make constant use of the scaling property of the free
resolvent
\begin{equation}\label{eq:scaR0}
  \textstyle
  R_{0}(z)=
  |z|^{-1}S_{\sqrt{|z|}}R_{0}(\frac{z}{|z|})S_{1/\sqrt{|z|}},
  \qquad
  z\in \rho(-\Delta)=
  \mathbb{C}\setminus[0,\infty)
\end{equation}
where $S_{t}u(x)=u(tx)$ denotes the scaling operator.
We fix a radial cutoff $\psi(\xi)\in C_{c}^{\infty}(\mathbb{R}^{n})$
with 
\begin{equation*}
  \one{|\xi|\le3/2}\le \psi(\xi)\le\one{|\xi|\le2}
\end{equation*}
and write $\psi^{c}=1-\psi$.
Then for $|z|=1$ we define the truncated operators
\begin{equation}\label{eq:R1R2}
  R_{1}(z)=\psi(D)R_{0}(z),\qquad
  R_{2}(z)=\psi^{c}(D)R_{0}(z).
\end{equation}
while for arbitrary $z\in \rho(-\Delta)$ we define by scaling
\begin{equation*}
  \textstyle
  R_{j}(z)=
  |z|^{-1}S_{\sqrt{|z|}}R_{j}(\frac{z}{|z|})S_{1/\sqrt{|z|}},
  \qquad
  j=1,2.
\end{equation*}
Note that if $R_{j}:L^{p,s}\to L^{q,r}$ ($j=0,1,2$) is bounded 
between any two Lorentz spaces with
norm $C$ for $|z|=1$, $z\neq1$, from the scaling relation we get
for all $z\in \rho(-\Delta)$
\begin{equation}\label{eq:scaLpLq}
  \textstyle
  \|R_{j}(z)\phi\|_{L^{q,r}}\le C
  |z|^{\theta(p,q)}\|\phi\|_{L^{p,s}}
  \qquad
  \theta(p,q)=\frac n2(\frac{1}{p}-\frac{1}{q})-1.
\end{equation}

The two pieces $R_{1},R_{2}$ satisfy different estimates;
we think that the splitting $R_{0}=R_{1}+R_{2}$
gives a more clear picture of why some estimates
are failing, and at which points.
Namely, $R_{1}$ is bounded for indices in the pentagon
$BB'X'ZX$, while $R_{2}\simeq \bra{D}^{-2}$ is bounded
in the region $OO'HK$, essentially equivalent to a Sobolev
embedding with a loss of $2$ derivatives.
Uniform resolvent estimates for $R_{0}$ are possible only 
where the two areas overlap, as represented by 
the darker area in the picture.

\subsection{Estimates for \pdfmath{R_{1}}}\label{sec:estiR1}

We begin by a simple Bernstein type estimate in Lorentz 
spaces for a multiplier operator $\chi(D)$ with a 
well behaved symbol.

\begin{lemma}\label{lem:bernslor}
  Assume 
  $\widehat{\chi}(\xi)\in L^{1}\cap L^{\infty}(\mathbb{R}^{n})$.
  Then $\chi(D)$ satisfies
  \begin{equation}\label{eq:bernst}
    \|\chi(D)\phi\|_{L^{q,r}}\lesssim
    \|\widehat{\chi}\|_{L^{1}\cap L^{\infty}}
    \|\phi\|_{L^{p,s}}
  \end{equation}
  in the following cases:
  \begin{enumerate}[label=(\roman*)]
    \item $1<q<p<\infty$, $r,s\in(0,\infty]$
    \item $p=q\in(1,\infty)$, $r=s\in(0,\infty]$
    or $p=q=r=s=\infty$
    or $p=q=r=s=1$
    \item $q=r=\infty$, $p\in(1,\infty)$, $s\in(0,\infty]$
    \item $p=s=1$, $q\in(1,\infty)$, $r\in(0,\infty]$
  \end{enumerate}
\end{lemma}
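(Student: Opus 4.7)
I start by identifying $\chi(D)$ with convolution by the kernel $K=(2\pi)^{-n}\widehat\chi(-\cdot)$, so that $\|K\|_{L^{p}}=(2\pi)^{-n}\|\widehat\chi\|_{L^{p}}$ for all $p$, and in particular $K\in L^{1}\cap L^{\infty}$. Chebyshev's inequality gives $|\{|K|>t\}|\le \|K\|_{L^{1}}/t$, which combined with the trivial $K^{*}(t)\le \|K\|_{L^{\infty}}$ yields the pointwise rearrangement bound
\begin{equation*}
  K^{*}(t)\le \min\bigl(\|K\|_{L^{\infty}},\,\|K\|_{L^{1}}/t\bigr).
\end{equation*}
A direct computation of $\|K\|_{L^{a,b}}=\bigl(\int_{0}^{\infty}(t^{1/a}K^{*}(t))^{b}\,dt/t\bigr)^{1/b}$ then shows that $K\in L^{a,b}$ for every $a\in(1,\infty)$ and every $b\in(0,\infty]$, with norm controlled by $\|\widehat\chi\|_{L^{1}\cap L^{\infty}}$. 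After this observation the lemma reduces, case by case, to appropriate forms of the Young / Hölder convolution inequalities in Lorentz spaces.

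For case (i) --- which I read as $1<p<q<\infty$, since the opposite ordering $q<p$ is incompatible with convolution by an $L^{1}$ kernel --- I apply O'Neil's convolution inequality. With $a\in(1,\infty)$ chosen so that $\tfrac{1}{a}=1-(\tfrac{1}{p}-\tfrac{1}{q})$, O'Neil gives
\begin{equation*}
  \|K\ast \phi\|_{L^{q,r}}\lesssim \|K\|_{L^{a,r_{1}}}\|\phi\|_{L^{p,s}}
  \quad\text{whenever}\quad \tfrac{1}{r}\le\tfrac{1}{r_{1}}+\tfrac{1}{s}.
\end{equation*}
For arbitrary $r,s\in(0,\infty]$ I pick $r_{1}$ small enough to satisfy this constraint, and the first paragraph controls $\|K\|_{L^{a,r_{1}}}$ by $\|\widehat\chi\|_{L^{1}\cap L^{\infty}}$, which closes the case.

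The remaining three cases are more direct. In case (ii), for $1<p<\infty$ Young's inequality with $K\in L^{1}$ gives $\|K\ast \phi\|_{L^{p,s}}\le \|K\|_{L^{1}}\|\phi\|_{L^{p,s}}$, while the two scalar endpoint sub-cases are just the standard $L^{1}\to L^{1}$ and $L^{\infty}\to L^{\infty}$ bounds for convolution by $L^{1}$. In case (iii), Hölder in Lorentz form yields
\begin{equation*}
  |K\ast \phi(x)|\le \|K\|_{L^{p',s'}}\|\phi\|_{L^{p,s}}
\end{equation*}
uniformly in $x$, with $s'$ chosen so that $\tfrac{1}{s}+\tfrac{1}{s'}\ge 1$, and $K\in L^{p',s'}$ by the first paragraph since $1<p'<\infty$. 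In case (iv), translation invariance of $L^{q,r}$ together with Minkowski's inequality give $\|K\ast \phi\|_{L^{q,r}}\le \|K\|_{L^{q,r}}\|\phi\|_{L^{1}}$, and $K\in L^{q,r}$ for every $q\in(1,\infty)$, $r\in(0,\infty]$.

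The main technical point is invoking the Lorentz form of Young's / Hölder's inequality with sufficient freedom in the secondary exponents to reach arbitrary $r,s$; this is precisely what the rearrangement bound of the first paragraph provides, by placing $K$ simultaneously in every non-endpoint Lorentz class $L^{a,b}$. Once this observation is available, the case analysis above is routine.
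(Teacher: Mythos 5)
Your approach is genuinely different from the paper's, and it is worth comparing the two. The paper proves \eqref{eq:bernst} by first using Young's inequality to get $L^{1}\to L^{1}$, $L^{\infty}\to L^{\infty}$ and $L^{1}\to L^{\infty}$ bounds, then complex interpolation to reach $L^{p}\to L^{q}$ for all $1\le p\le q\le\infty$, and finally \emph{real interpolation} (first varying the target exponent with source fixed, then varying the source with the Lorentz target fixed) to upgrade to the Lorentz scale; each of (i)--(iv) is then a particular interpolation pair. You instead place the kernel $K=\check\chi$ directly in every non-endpoint Lorentz class $L^{a,b}$ via the rearrangement bound $K^{*}(t)\le\min(\|K\|_{\infty},\|K\|_{1}/t)$, and then invoke O'Neil's convolution inequality, the Lorentz form of H\"older, Young, and Minkowski case by case. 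Your route is more self-contained in that it makes the role of the kernel's Lorentz regularity explicit; the paper's route avoids O'Neil entirely and relies only on the classical interpolation functors, which fits better with how the rest of Section~\ref{sec:free_case} (e.g.\ Lemma~\ref{lem:berndyad}, Corollary~\ref{cor:betterlor}) is organized. You are also right that case (i) must be read as $1<p<q<\infty$; the paper's proof establishes exactly that range, so the ordering in the statement is a typo.

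One step in your argument needs repair. In case (iv) you invoke Minkowski's integral inequality to write $\|K\ast\phi\|_{L^{q,r}}\le\int|\phi(y)|\,\|K(\cdot-y)\|_{L^{q,r}}\,dy$. For $q\in(1,\infty)$ and $r\ge1$ this is legitimate because $L^{q,r}$ admits the equivalent norm built from $f^{**}$, hence is a Banach space and Minkowski applies. But for $r\in(0,1)$ the space $L^{q,r}$ is only $r$-normed, and Minkowski's integral inequality in this form is not available; you would get the reversed $\ell^{r}$-type bound $(\sum|c_{k}|^{r})^{1/r}\ge\|\phi\|_{L^{1}}$, which goes the wrong way. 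The conclusion is still true for $r<1$, and the cleanest fixes are either to run the paper's interpolation argument for this one sub-case (interpolating $L^{1}\to L^{q_{0}}$ against $L^{1}\to L^{q_{1}}$ with $(\cdot,\cdot)_{\theta,r}$, which legitimately produces $L^{q,r}$ for all $r\in(0,\infty]$), or to use the O'Neil pointwise bound $(K\ast\phi)^{**}(t)\le t\,K^{**}(t)\phi^{**}(t)+\int_{t}^{\infty}K^{*}\phi^{*}\lesssim K^{**}(t)\|\phi\|_{L^{1}}$ and integrate directly, noting that $t^{1/q}K^{**}(t)\in L^{r}(dt/t)$ for every $q\in(1,\infty)$ and $r\in(0,\infty]$ from the same rearrangement estimate on $K$ (the $\log$ loss in $K^{**}$ at large $t$ is harmless since $1/q-1<0$). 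With that patch, your argument is complete.
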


\begin{proof}%[Proof of ...]
  We can write $\chi(D)\phi=\check \chi* \phi$ as a convolution
  with the inverse Fourier transform of $\chi$; by Young's
  inequality this gives for all $p\in[1,\infty]$
  \begin{equation*}
    \|\chi(D)\phi\|_{L^{p}}\le
    \|\widehat{\chi}\|_{L^{1}}\|\phi\|_{L^{p}},
    \qquad
    \|\chi(D)\phi\|_{L^{\infty}}\le
    \|\widehat{\chi}\|_{L^{\infty}}\|\phi\|_{L^{1}}
  \end{equation*}
  and by complex interpolation we get
  $\|\chi(D)\phi\|_{L^{q}}\le
    \|\widehat{\chi}\|_{L^{1}\cap L^{\infty}}\|\phi\|_{L^{p}}$
  for all $1\le p\le q\le \infty$.
  Keeping $p$ fixed and applying real interpolation between
  $L^{p}\to L^{q_{0}}$, $L^{p}\to L^{q_{1}}$ for some
  $1\le p\le q_{0}< q_{1}\le\infty$ we get boundedness
  $L^{p}\to L^{q,r}$ for all $1\le p<q<\infty$ and $r\in(0,\infty]$;
  then interpolating
  $L^{p_{0}}\to L^{q,r}$ and $L^{p_{1}}\to L^{q,r}$ 
  for some $1\le p_{0}<p_{1}<q<\infty$ we get (i).
  To prove (ii) we interpolate between
  $L^{p_{0}}\to L^{p_{0}}$ and
  $L^{p_{1}}\to L^{p_{1}}$ or apply directly the
  standard estimates with $p_{0}=1$ and $p_{0}=\infty$.
  Case (iii) follows interpolating
  $L^{p_{0}}\to L^{\infty}$ and
  $L^{p_{1}}\to L^{\infty}$ for arbitrary $p_{j}$,
  and case (iv) interpolating
  $L^{1}\to L^{q_{0}}$ and
  $L^{1}\to L^{q_{1}}$ for arbitrary $q_{j}$.
\end{proof}

Note that $R_{1}(z)$ has symbol $(|\xi|^{2}-z)^{-1}\psi(\xi)$,
which is uniformly bounded with all derivatives provided
$|z|=1$ and $dist(z,\sigma(-\Delta))\ge c>0$;
thus $R_{1}(z)$ satisfies all the
estimates of the previous Lemma, and the problem is only
to show that the operator norm remains bounded as $z$ approaches
the positive real axis.
In the following estimates, the power of $|z|$ is always
$\frac{|\alpha|}{2}+\theta(p,q)$ as dictated by scaling.

\begin{theorem}\label{the:estR1}
  For $n\ge2$, $\alpha\in \mathbb{N}^{n}_{0}$ the following
  estimates hold.
  At point $B$ we have:
  \begin{equation}\label{eq:estR1aweak}
    \textstyle
    \|\partial^{\alpha}R_{1}(z)\phi\|_{L^{q_{B},\infty}}
    \lesssim
    |z|^{\frac{|\alpha|}{2}-\frac{1}{n+1}}
      \|\phi\|_{L^{p_{B},1}},
    \quad
    \frac 1{p_{B}}= \frac 12+\frac{1}{2n},
    \quad
    \frac 1{q_{B}}=\frac 12+\frac{1}{2n}-\frac{2}{n+1}.
  \end{equation}
  and a similar $L^{q_{B}',1}\to L^{p_{B}',\infty}$ estimate
  at the dual point $B'$. At point $X$ we have:
  \begin{equation}\label{eq:estR1aweakX}
    \textstyle
    \|\partial^{\alpha}R_{1}(z)\phi\|_{L^{\infty}}
    \lesssim
    |z|^{\frac{|\alpha|}{2}+\frac{n-3}{4}}\|\phi\|_{L^{p_{B},1}}
  \end{equation}
  and at $X'$ we have the dual estimate
  $L^{1}\to L^{p_{B}',\infty}$.
  Along the open line $BX$ we have
  \begin{equation}\label{eq:estR1aweakBX}
    \textstyle
    \|\partial^{\alpha}R_{1}(z)\phi\|_{L^{q,1}}
    \lesssim
    |z|^{\frac{|\alpha|}{2}+\frac{n-3}{4}-\frac n{2q}}
    \|\phi\|_{L^{p_{B},1}},
    \qquad
    0<\frac 1q< \frac{1}{q_{B}}
  \end{equation}
  and the dual estimate $L^{q',\infty}\to L^{p_{B}',\infty}$
  along the open line $B'X'$,
  while at $Z$ we have
  \begin{equation}\label{eq:estR1Z}
    \|\partial^{\alpha}R_{1}(z)\phi\|_{L^{\infty}}
    \lesssim
    |z|^{\frac{|\alpha|}{2}+\frac{n}{2}-1}
    \|\phi\|_{L^{1}}.
  \end{equation}
  Along the open line $BB'$ we have for all $r\in(0,\infty]$
  \begin{equation}\label{eq:estR1BBp}
    \textstyle
    \|\partial^{\alpha}R_{1}(z)\phi\|_{L^{q,r}}
    \lesssim
    |z|^{\frac{|\alpha|}{2}-\frac{1}{n+1}}
    \|\phi\|_{L^{p,r}},
    \qquad
    \frac 1p-\frac 1q=\frac{2}{n+1},
    \quad
    \frac 12-\frac{1}{2n}<\frac 1p<
    \frac 12-\frac{1}{2n}+\frac{2}{n+1}.
  \end{equation}
  Finally we have
  \begin{equation}\label{eq:estR1a}
    \textstyle
    \|\partial^{\alpha}R_{1}(z)\phi\|_{L^{q,1}}
    \lesssim
    |z|^{\frac{|\alpha|}{2}+\theta(p,q)}\|\phi\|_{L^{p,\infty}}
  \end{equation}
  in the open pentagon $BB'X'ZX$, i.e.
  provided $p,q\in(1,\infty)$ satisfy
  \begin{equation}\label{eq:estR1apq}
    \textstyle
    \frac 1p-\frac 1q> \frac{2}{n+1},
    \qquad
    \frac 1p>\frac{1}{p_{B}}=\frac 12+\frac{1}{2n},
    \qquad
    \frac 1q< \frac{1}{p_{B}'}=\frac 12-\frac{1}{2n}.
  \end{equation}
\end{theorem}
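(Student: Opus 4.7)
The plan is to reduce everything to the normalized case $|z|=1$ using the scaling identity \eqref{eq:scaR0} together with \eqref{eq:scaLpLq}: the prefactor $|z|^{|\alpha|/2+\theta(p,q)}$ appearing in every estimate of the theorem is exactly the one dictated by scaling, so it suffices to prove the $|z|=1$ version with bounds uniform in $z\in \mathbb{C}\setminus[0,\infty)$, $|z|=1$. On this circle the symbol $m_{z}^{\alpha}(\xi)=(i\xi)^{\alpha}\psi(\xi)(|\xi|^{2}-z)^{-1}$ is singular only in a tubular neighborhood of the unit sphere $\mathbb{S}^{n-1}$, and the derivatives $\partial^{\alpha}$ are harmless since $|\xi|\le 2$ on the support of $\psi$. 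I would split $R_{1}(z)=R_{1}^{\rm smooth}(z)+R_{1}^{\rm sing}(z)$ by means of a further cutoff $\chi(|\xi|^{2}-\Re z)$ isolating an $O(1)$ neighborhood of the sphere, and analyze the two pieces separately.

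For $R_{1}^{\rm smooth}(z)$ the symbol is a compactly supported $C^{\infty}$ function, uniformly bounded with all derivatives as $z$ varies, so its Fourier transform lies in $L^{1}\cap L^{\infty}$ uniformly. Lemma \ref{lem:bernslor} then gives all the required Lorentz mapping properties; in particular the corner estimate at $Z$ in \eqref{eq:estR1Z} and the $L^{\infty}$ estimates at $X$, $X'$ follow immediately from items (ii)--(iv) of the lemma, because at these extremal points the gain of two derivatives accounts exactly for the Sobolev-like factor $|\xi|^{2}$ in the symbol.

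The main work lies with $R_{1}^{\rm sing}(z)$. Here the strategy is to perform a dyadic Littlewood--Paley-type decomposition $R_{1}^{\rm sing}(z)=\sum_{k\ge0}T_{k}(z)$ where $T_{k}(z)$ has symbol supported in the shell $\{||\xi|^{2}-\Re z|\sim 2^{-k}\}$; on that shell the symbol has size $\sim 2^{k}$, while the shell has thickness $\sim 2^{-k}$ around $\mathbb{S}^{n-1}$. Each $T_{k}(z)$ is then a Fourier multiplier localized to a $2^{-k}$-neighborhood of a compact hypersurface with nonvanishing Gaussian curvature, and a standard $TT^{\ast}$ argument reduces its boundedness on $L^{p_{B},1}\to L^{q_{B},\infty}$ at the Stein--Tomas endpoint $B$ to the sharp Lorentz version of the Stein--Tomas restriction theorem (Bak--Seeger) applied to each parallel sphere in the shell, followed by Fubini in the radial direction. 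The gain $2^{-k}$ from the radial measure cancels the loss $2^{k}$ from the symbol, so the series $\sum_{k}\|T_{k}(z)\|$ converges \emph{uniformly in $z$}. This is the step I expect to be the main obstacle: one needs the restriction estimate in the Lorentz-endpoint form $L^{p_{B},1}\to L^{q_{B},\infty}$, and one needs the summation over $k$ to be truly uniform in $\Im z$, which requires a careful almost-orthogonality argument and care with the PV versus $\delta$-contributions coming from $(|\xi|^{2}-z)^{-1}$ as $\Im z\to0$.

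With \eqref{eq:estR1aweak} in hand at $B$ and \eqref{eq:estR1aweakX} at $X$ (the latter obtained by Cauchy--Schwarz in the radial variable on each shell, exploiting the $L^{2}(\mathbb{S}^{n-1})$ norm of the Fourier transform and summing the geometric series $\sum_{k}2^{-k/2}2^{k(n-3)/4}$), the open-segment estimate \eqref{eq:estR1aweakBX} along $BX$ follows from real interpolation between these two Lorentz endpoints. The line-$BB'$ estimate \eqref{eq:estR1BBp} is symmetric with respect to duality $R_{1}(z)^{\ast}=R_{1}(\bar z)$ and follows by interpolating the $B$ bound with its $B'$ dual along the direction $\frac 1p-\frac 1q=\frac{2}{n+1}$. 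Finally, the strong-type bound \eqref{eq:estR1a} in the open pentagon $BB'X'ZX$ comes by real interpolation with parameters $(\infty,1)$ between the weak-type bounds on the five sides (the sides $BX$, $B'X'$, $XZ$, $X'Z$, $BB'$), using that $L^{p,\infty}\to L^{q,1}$ strong-type bounds are produced by real interpolation between appropriately positioned weak endpoints with strictly different exponents. The primed sides and points are handled throughout by taking adjoints in $z\mapsto \bar z$.
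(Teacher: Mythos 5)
Your route to the endpoint estimate at $B'$ (dyadic shell decomposition around $\{|\xi|^{2}=\Re z\}$, $TT^{*}$, the Lorentz-sharp Stein--Tomas estimate of Bak--Seeger, almost-orthogonality to sum) is, in essence, a re-derivation of formula (40) in \cite{Gutierrez04-a}, which the paper simply cites. That part is a valid, if more laborious, alternative. The paper then does something you miss entirely, and which collapses the rest of the theorem to a triviality: since $\psi$ is compactly supported one may insert a fattened Schwartz multiplier $\chi_{1}$ with $\chi_{1}\psi=\psi$, so that $R_{1}(z)=\chi_{1}(D)R_{1}(z)$ identically. Because $\widehat{\chi_{1}}\in L^{1}\cap L^{\infty}$, Lemma \ref{lem:bernslor} lets $\chi_{1}(D)$ pass \emph{for free} from $L^{q_{B},\infty}$ to any $L^{q,1}$ with $1<q<q_{B}$, to $L^{\infty}$, etc.; in particular \eqref{eq:estR1aweakX}, \eqref{eq:estR1aweakBX} and \eqref{eq:estR1Z} all follow from \eqref{eq:estR1aweak} and the Bernstein lemma in one line each, with no further appeal to the geometry of the sphere. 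Your attempt to prove these points from scratch does not close.

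Concretely, two of your steps have genuine gaps. First, at $X$ you propose Cauchy--Schwarz in the radial variable on each shell, ``summing the geometric series $\sum_{k}2^{-k/2}2^{k(n-3)/4}$''; but for $|z|=1$ no factor $2^{k(n-3)/4}$ should be present (that is the scaling power, already accounted for when you reduced to $|z|=1$), and the ratio you would actually need, $\sum_{k}2^{k(n-5)/4}$, diverges for $n\ge5$ --- it is not a summable geometric series in the relevant range of dimensions. Moreover the ``$L^{2}(\mathbb{S}^{n-1})$ norm of the Fourier transform'' that you invoke is only controlled by $\|\phi\|_{L^{p}}$ up to the Stein--Tomas exponent $p_{ST}=\tfrac{2(n+1)}{n+3}$, whereas $p_{B}=\tfrac{2n}{n+1}>p_{ST}$, so no restriction-theorem slicing argument will close at $X$. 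Second, you say the open-segment bound along $BX$ follows by ``real interpolation between these two Lorentz endpoints''; but interpolating the two weak endpoints $L^{p_{B},1}\to L^{q_{B},\infty}$ (at $B$) and $L^{p_{B},1}\to L^{\infty}$ (at $X$) with the source fixed produces $L^{p_{B},1}\to L^{q,r}$ with $r$ determined by the second interpolation parameter, and at best gives $r=\infty$ or $r$ between the two --- not the stated $L^{q,1}$. The paper gets $L^{q,1}$ precisely because Lemma \ref{lem:bernslor}--(i) sends $L^{q_{B},\infty}$ into $L^{q,1}$ for any $q<q_{B}$, a genuine gain of Lorentz exponent that interpolation alone cannot produce. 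The pentagon argument via ``real interpolation with parameters $(\infty,1)$ between the weak bounds on the five sides'' is also too vague; the paper's route (interpolate $Z$ with a boundary point to get $L^{p,r}\to L^{q,r}$, then interpolate two such estimates with the same $q$ to get $L^{p,\infty}\to L^{q,1}$) is the careful two-step version you should aim for.
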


\begin{proof}%[Proof of ...]
  Since all norms are scaling invariant, by the scaling
  argument \eqref{eq:scaLpLq} it is sufficient to prove the
  claims for $|z|=1$. Moreover, recalling \eqref{eq:R1R2},
  we can pick a test function $\chi_{1}$ with
  $\chi_{1}\psi=\psi$ and write
  \begin{equation}\label{eq:decomppaR}
    \partial^{\alpha}R_{1}(z)=
    \partial^{\alpha}\chi_{1}(D)\psi(D) R_{0}(z)=
    \chi(D)R_{1}(z),
    \qquad
    \chi(\xi):=(i\xi)^{\alpha}\chi_{1}(\xi).
  \end{equation}
  Thus by Lemma \ref{lem:bernslor} we see that it is sufficient
  to consider the case $\alpha=0$ in the proof. Now, from 
  formula (40) in \cite{Gutierrez04-a} we get the estimate
  \begin{equation}\label{eq:gutR1}
    \|R_{0}(1+i \epsilon)\phi\|_{L^{p_{B}',\infty}}
    \lesssim
    \|\phi\|_{L^{q_{B}',1}}.
  \end{equation}
  which is the estimate at point $B'$.
  Since $R_{1}=\psi(D)R_{0}$, by Lemma \ref{lem:bernslor}
  the same bound is satisfied by $R_{1}(1+i \epsilon)$,
  and by the previous elementary argument it is satisfied by
  $R_{1}(z)$ for all $|z|=1$.
  This proves claim \eqref{eq:estR1aweak} so that
  point $B'$ is covered, and point $B$ follows by duality.

  The other claims follow by interpolation, duality and
  Lemma \ref{lem:bernslor}.
  Writing $R_{1}(z)=\chi_{1}(D)R_{1}(z)$ and
  using Lemma \ref{lem:bernslor}--(iii) we get
  \eqref{eq:estR1aweakX} at point $X$, while using 
  (i) and writing
  \begin{equation*}
    \|R_{1}\phi\|_{L^{q,1}}=
    \|\chi_{1}(D)R_{1}\phi\|_{L^{q,1}}\lesssim
    \|R_{1}\phi\|_{L^{q_{B},\infty}}\lesssim
    \|\phi\|_{L^{p_{B},1}}
  \end{equation*}
  we obtain \eqref{eq:estR1aweakBX} along the line $BX$.
  By duality this gives the estimate along $B'X'$:
  \begin{equation*}
    \textstyle
    \|R_{1}\phi\|_{L^{p_{B}',\infty}}\lesssim
    \|\phi\|_{L^{q',\infty}},
    \qquad
    0<\frac 1q<\frac{1}{q_{B}}.
  \end{equation*}
  Interpolating between the points $B$, $B'$ we get
  \eqref{eq:estR1BBp}.
  Finally, consider the open pentagon $BB'X'ZX$.
  By real interpolation between $Z$ and any point of the
  boundary $XBB'X'$ we get an
  $L^{p,r}\to L^{q,r}$ estimate, for arbitrary $r$.
  Then by interpolating between two estimates
  $L^{p_{0},r}\to L^{q,r}$ and $L^{p_{1},r}\to L^{q,r}$
  with the same $q$ we get the
  $L^{p,s}\to L^{q,r}$ estimate \eqref{eq:estR1apq}.
\end{proof}

For the second estimate we need a weighted version of
\eqref{eq:bernst}.

\begin{lemma}\label{lem:weighber}
  Let $\chi(\xi)\in \mathscr{S}$ and
  $a>0$, $1\le p\le q<\frac na$. Then we have
  \begin{equation}\label{eq:weighber}
    \||x|^{-a}\chi(D)\phi\|_{L^{q}}\lesssim
    \||x|^{-a}\phi\|_{L^{p}}.
  \end{equation}
  Moreover $|x|^{-a}\chi(D)|x|^{a}:L^{p}\to L^{q}(\Omega)$ is compact
  provided $q>1$ and $\Omega$ is bounded.
\end{lemma}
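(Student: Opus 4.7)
My plan is to reduce the first estimate to the norm bound $T := |x|^{-a}\chi(D)|x|^a : L^p \to L^q$, whose integral kernel is $K(x,y) = |x|^{-a}|y|^a\check\chi(x-y)$ with $\check\chi\in\mathscr S$. I would split $K$ according to the ratio $|y|/|x|$ into three regions and exploit a different structural feature in each: the near-diagonal region $\{|y|/2 \le |x|\le 2|y|\}$, the inner region $\{|y|<|x|/2\}$, and the outer region $\{|y|>2|x|\}$. On the near-diagonal piece $|x|^{-a}|y|^a$ is bounded, so $|K(x,y)|\lesssim|\check\chi(x-y)|$ and this piece reduces to convolution with a Schwartz function, bounded $L^p\to L^q$ for $p\le q$ by Young's inequality. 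On the inner piece, $|x-y|\ge|x|/2$ gives $|\check\chi(x-y)|\lesssim_N(1+|x|)^{-N}$; Hölder applied to the locally bounded weight $|y|^a\mathbf 1_{\{|y|<|x|/2\}}\in L^{p'}_y$ (whose norm is $\lesssim |x|^{a+n/p'}$) then dominates the inner contribution by a Schwartz-type function of $x$ times $\|\phi\|_{L^p}$, which lies in every $L^q$.

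The decisive piece is the outer region, which is where the hypothesis $q<n/a$ enters. Here $|x-y|\ge|y|/2$ upgrades the decay to $|\check\chi(x-y)|\lesssim_N(1+|y|)^{-N}$, giving
\[
|T_{\mathrm{out}}\phi(x)| \lesssim |x|^{-a}\int_{|y|>2|x|}\Psi(y)|\phi(y)|\,dy, \qquad \Psi(y) := |y|^a(1+|y|)^{-N}.
\]
For $N$ sufficiently large, $\Psi\in L^{p'}(\mathbb R^n)$, and Hölder produces $|T_{\mathrm{out}}\phi(x)|\lesssim|x|^{-a}\|\phi\|_{L^p}$ for $|x|\le 1$; the condition $q<n/a$ is exactly what makes $|x|^{-a}$ sit in $L^q$ of the unit ball. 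For $|x|>1$, a localized Hölder estimate gives $\|\Psi\mathbf 1_{\{|y|>2|x|\}}\|_{L^{p'}}\lesssim |x|^{a-N+n/p'}$, producing decay of the form $|x|^{-N+n/p'}$, which is $L^q$-integrable at infinity for $N$ large. This balancing between the weight blow-up at the origin and the $L^q$-integrability of $|x|^{-a}$ is the main technical obstacle of the proof; one must resist the temptation to estimate $F(2|x|)$ by $F(0)$, since the natural Hölder bound on $F(r)$ actually blows up as $r\to 0$.

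For the compactness, I truncate by choosing $\eta\in C_c^\infty(\mathbb R^n)$ equal to one near $0$, setting $\check\chi_R := \check\chi\cdot\eta(\cdot/R)$ and $T_R := |x|^{-a}\chi_R(D)|x|^a$. Because $\check\chi_R$ is compactly supported, $|T_R\phi(x)|\le C|x|^{-a}\|\phi\|_{L^p}$ uniformly for $x\in\Omega$, with $|x|^{-a}\in L^q(\Omega)$ by $q<n/a$; moreover, for any $\phi_n\rightharpoonup\phi$ weakly in $L^p$ (when $p>1$) one has $\chi_R(D)(|y|^a\phi_n)(x)\to \chi_R(D)(|y|^a\phi)(x)$ pointwise, and dominated convergence yields $T_R\phi_n\to T_R\phi$ in $L^q(\Omega)$; the case $p=1$ is handled analogously via the Kolmogorov--Riesz criterion using uniform continuity of the smooth kernel. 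Hence each $T_R$ is compact. Applying the first part of the lemma to $\chi-\chi_R$, whose inverse transform is supported in $|z|\ge cR$ and satisfies $|\check\chi(z)-\check\chi_R(z)|\lesssim R^{-K}(1+|z|)^{-N}$ for arbitrary $K,N$, yields $\|T-T_R\|_{L^p\to L^q}\to 0$. Thus $T$ is a norm limit of compact operators, hence compact.
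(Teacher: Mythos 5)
Your proof of the boundedness \eqref{eq:weighber} is correct and follows essentially the paper's route: write $T=|x|^{-a}\chi(D)|x|^a$ as an integral operator with kernel $|y|^a|x|^{-a}\check\chi(x-y)$ and split according to the ratio $|y|/|x|$. The only difference is that the paper uses two regions rather than three: on $\{|y|\le 2|x|\}$ the ratio $|y|^a/|x|^a$ is already bounded, so $|K(x,y)|\lesssim\langle x-y\rangle^{-N}$ and a single application of Young's inequality gives boundedness $L^p\to L^q$ for all $p\le q$; your separate H\"older treatment of the inner region $\{|y|<|x|/2\}$ is therefore a correct but superfluous refinement. On the outer region $\{|y|>2|x|\}$ the paper extracts the decay in $x$ directly from $\langle x-y\rangle^{-2N}\lesssim\langle x\rangle^{-N}\langle y\rangle^{-N}$, obtaining the pointwise majorant $|x|^{-a}\langle x\rangle^{-N}\langle y\rangle^{-N}$; your two-case analysis in $|x|\le 1$ versus $|x|>1$ reaches the same conclusion with slightly more bookkeeping. (Incidentally, your parenthetical remark that $F(r)=\|\Psi\mathbf 1_{|y|>r}\|_{L^{p'}}$ ``blows up as $r\to0$'' is not quite right --- $F$ is decreasing with $F(0)<\infty$; what goes wrong with $F(2|x|)\le F(0)$ is only the lack of decay at $|x|\to\infty$, which is exactly what you then repair.)

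Your compactness argument is a genuinely different route from the paper's. The paper observes that after localizing to $\Omega$ the kernel admits the bound $\mathbf 1_\Omega(x)|K(x,y)|\lesssim \mathbf 1_\Omega(x)\,|x|^{-a}\,\langle y\rangle^{-(N-a)}$, a tensor product of an $L^q_x$ function (thanks to $q<n/a$ and the boundedness of $\Omega$) with an $L^{p'}_y$ function, so $K$ lies in the Hille--Tamarkin mixed Lebesgue class and compactness $L^p\to L^q(\Omega)$ follows from general theory in a single stroke. You instead truncate $\check\chi$ at scale $R$, prove each truncated operator is compact (weak-to-strong continuity plus dominated convergence for $p>1$, Kolmogorov--Riesz for $p=1$), and then show $\|T-T_R\|_{L^p\to L^q(\Omega)}\to0$ by feeding the Schwartz seminorms of $\check\chi-\check\chi_R$ back into the first part of the lemma. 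Both approaches are sound. The paper's is shorter and avoids the separate $p=1$ discussion; your approximation-by-compacts argument is more self-contained and makes the mechanism explicit, at the cost of having to justify the weak-limit step and of leaving the $p=1$ case to a sketch.
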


\begin{proof}%[Proof of ...]
  Estimate \eqref{eq:weighber} is equivalent to the boundedness
  $T:L^{p}\to L^{q}$ of the operator
  $T \phi=|x|^{-a}\chi(D)|x|^{a}\phi$,
  which is an integral operator
  \begin{equation*}
    \textstyle
    T \phi(x)=\int K(x,y) \phi(y)dy,
    \qquad
    K(x,y)=\frac{|y|^{a}}{|x|^{a}}\check \chi(x-y)
  \end{equation*}
  where $\check\chi\in \mathscr{S}$ so that
  $|\check\chi(x)|\lesssim_{N} \bra{x}^{-N}$ for all $N$.
  We split the kernel $K$ as
  \begin{equation*}
    K_{1}(x,y)=K(x,y)\one{|y|\le2|x|},\qquad
    K_{2}(x,y)=K(x,y)\one{|y|>2|x|}
  \end{equation*}
  and call $T=T_{1}+T_{2}$ the corresponding splitting of $T$.
  Since $|K_{1}(x,y)|\lesssim\bra{x-y}^{-N}$ for all
  integer $N$, the operator $T_{1}$ is bounded
  $L^{p}\to L^{q}$ for all $1\le p\le q\le \infty$.
  For the second kernel $K_{2}$, since $|y|\ge 2|x|$
  we have $\bra{x-y}\simeq \bra{y} \gtrsim \bra{x}$, so that
  \begin{equation*}
    \textstyle
    |K_{2}(x,y)|\lesssim
    \frac{|y|^{a}}{|x|^{a}\bra{x}^{N}\bra{y}^{N+a}}\lesssim
      \frac{1}{|x|^{a}\bra{x}^{N}}\frac{1}{\bra{y}^{N}}.
  \end{equation*}
  This implies 
  $|T_{2}\phi(x)|\lesssim \frac{1}{|x|^{a}\bra{x}^{N}}
    \int \frac{|\phi(y)|}{\bra{y}^{N}}dy$
  so that
  \begin{equation*}
    \textstyle
    |T_{2}\phi(x)|\lesssim 
      \frac{\|\phi\|_{L^{1}}}{|x|^{a}\bra{x}^{N}}
    \qquad\text{and}\qquad 
    |T_{2}\phi(x)|\lesssim 
      \frac{\|\phi\|_{L^{\infty}}}{|x|^{a}\bra{x}^{N}}
  \end{equation*}
  and as a consequence $T_{2}:L^{1}\to L^{q}$ and
  $T_{2}:L^{\infty}\to L^{q}$ for all $1\le q<\frac na$.
  Combining the two estimates we get the claim.
  Compactness follows from the remark that
  \begin{equation*}
    \textstyle
    \one{\Omega}(x)|K(x,y)|\lesssim
    \one{\Omega}(x)\frac{|y|^{a}}{|x|^{a}}\bra{x-y}^{-N}\lesssim
    \frac{\one{\Omega}(x)}{|x|^{a}\bra{y}^{N-a}}
    \in L^{p}_{x}L^{q'}_{y}
  \end{equation*}
  since $p<\frac na$, from the general properties of
  Hille--Tamarkin operators.
\end{proof}

We next prove a version of the previous weighted estimates
in the dyadic norms \eqref{eq:dyad1} and more generally
\begin{equation}\label{eq:dyad2}
  \textstyle
  \|v\|_{\ell^{p}(2^{-js})L^{q}}=
  \left\|2^{-js}\|v\|_{L^{q}(C_{j})}
  \right\|_{\ell^{p}_{j}}
  =
  \left(\sum_{j\in \mathbb{Z}}2^{-pjs}
  \|v\|_{L^{2}(C_{j})}^{p}\right)^{1/p},
  \qquad
  p\in[1,\infty)
\end{equation}
where $C_{j}=\{x:2^{j}\le|x|<2^{j+1}\}\subseteq \mathbb{R}^{n}$.

\begin{lemma}[]\label{lem:berndyad}
  Let $\chi(\xi)\in \mathscr{S}$. For all
  $a>0$, $r\in(0,\infty]$, $1\le p\le q <\frac na$ we have
  \begin{equation}\label{eq:berndyad}
    \||x|^{-a}\chi(D)\phi\|_{\ell^{r}L^{q}}\lesssim
    \||x|^{-a}\phi\|_{\ell^{r}L^{p}}.
  \end{equation}
  Moreover 
  $|x|^{-a}\one{\Omega}\chi(D)|x|^{a}:
    \ell^{r}L^{p}\to \ell^{r}L^{q}$ is
  compact if $q>1$ and $\Omega$ is bounded.
\end{lemma}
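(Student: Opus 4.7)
The plan is to mirror the proof of Lemma \ref{lem:weighber}, adapting each step to the dyadic scale. The estimate \eqref{eq:berndyad} is equivalent to the boundedness $T:\ell^{r}L^{p}\to \ell^{r}L^{q}$ of the integral operator $T\phi=|x|^{-a}\chi(D)(|x|^{a}\phi)$, whose kernel $K(x,y)=\frac{|y|^{a}}{|x|^{a}}\check\chi(x-y)$ I would split as $K=K_{1}+K_{2}$ exactly as in Lemma \ref{lem:weighber}: the pointwise bounds
$$
|K_{1}(x,y)|\lesssim\tfrac{|y|^{a}}{|x|^{a}}\bra{x-y}^{-N},
\qquad
|K_{2}(x,y)|\lesssim\frac{1}{|x|^{a}\bra{x}^{N}}\cdot\frac{1}{\bra{y}^{N}}
$$
hold for every $N$, with $K_{1}$ supported in $\{|y|\le 2|x|\}$ and $K_{2}$ in $\{|y|>2|x|\}$.

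I would decompose $\phi=\sum_{k\in \mathbb{Z}}\phi_{k}$ with $\phi_{k}=\phi\one{C_{k}}$ and estimate $\|T_{i}\phi_{k}\|_{L^{q}(C_{j})}$ on each shell-to-shell pair to obtain a matrix inequality $\|T\phi\|_{L^{q}(C_{j})}\le\sum_{k}M_{jk}\|\phi_{k}\|_{L^{p}(C_{k})}$. For the near-diagonal block $|j-k|\le 2$, Young's inequality applied to convolution with $\check\chi\in\mathscr{S}$ together with the bound $|y|^{a}/|x|^{a}\lesssim 1$ give $M_{jk}\lesssim 1$, and only finitely many such entries appear at each $j$. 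For $k\le j-3$, $K_{1}$ alone contributes and produces the gain $2^{(k-j)a}$ from the weight ratio, combined with $\bra{x-y}^{-N}\simeq 2^{-Nj}$ whenever $j\ge0$. For $k\ge j+3$, $K_{2}$ takes over, and the bound $\||x|^{-a}\bra{x}^{-N}\|_{L^{q}(C_{j})}\simeq 2^{j(n/q-a)}$ paired with the $\bra{y}^{-N}$ decay integrated over $C_{k}$ provides the needed off-diagonal smallness.

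The next step is to verify that $M_{jk}$ satisfies the Schur condition $\sup_{j}\sum_{k}M_{jk}+\sup_{k}\sum_{j}M_{jk}<\infty$. This requires careful exponent bookkeeping, and is the main obstacle: one must check that the polynomial factors $2^{jn/q}$, $2^{kn/p'}$, $2^{\pm ja}$ coming from shell volumes and the weights are defeated by the Schwartz decay and the $|y|^{a}/|x|^{a}$ gain. The key algebraic point is that $n/p'+n/q-a>0$, which under the hypotheses $p\le q$ and $p\ge 1$ is equivalent \emph{precisely} to $q<n/a$; this is the unique place where that hypothesis enters. Once the Schur bound is established, the conclusion $\|\{M_{jk}a_{k}\}\|_{\ell^{r}_{j}}\lesssim\|\{a_{k}\}\|_{\ell^{r}_{k}}$ follows from Schur's test for $r\in[1,\infty]$ and from the quasi-subadditivity $\|\cdot\|_{\ell^{r}}^{r}\le\sum|\cdot|^{r}$ for $r\in(0,1)$, completing \eqref{eq:berndyad}.

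For the compactness claim, since $\Omega$ is bounded, only finitely many shells $C_{j_{1}},\dots,C_{j_{L}}$ intersect $\Omega$, so the $\ell^{r}L^{q}$ norm on the output reduces to a finite sum of $L^{q}(C_{j_{i}})$ norms. For each $j_{i}$ the restricted kernel $K(x,y)\one{\Omega}(x)$ lies in $L^{p}_{x}L^{q'}_{y}$ (with the $\bra{y}^{-N}$ decay guaranteeing summability across the shells $C_{k}$), so by the Hille--Tamarkin criterion each component operator $\ell^{r}L^{p}\to L^{q}(C_{j_{i}})$ is compact when $q>1$. A finite direct sum of compact operators is compact, yielding the claim.
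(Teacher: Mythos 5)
Your proposal is correct in outline but takes a genuinely different route from the paper. The paper's proof is a two-line real-interpolation argument: it observes that Lemma \ref{lem:weighber} is exactly the boundedness $\ell^{p}(2^{-ja})L^{p}\to\ell^{q}(2^{-ja})L^{q}$, writes that estimate at two nearby exponents $a\pm\epsilon$, and applies the Bergh--L\"{o}fstr\"{o}m formula $(\ell^{q_{0}}(2^{-ja_{0}})L^{p},\ell^{q_{1}}(2^{-ja_{1}})L^{p})_{\theta,r}\simeq\ell^{r}(2^{-ja})L^{p}$ to change the outer $\ell$-index to $r$; compactness then follows because interpolation of compact operators produces compact operators. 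Your shell-by-shell Schur-test argument is more elementary and self-contained, at the cost of substantially more exponent bookkeeping.

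Two points in your write-up deserve tightening. First, for $r\in(0,1)$ quasi-subadditivity reduces the claim to $\sup_{k}\sum_{j}M_{jk}^{r}<\infty$, which is \emph{strictly stronger} than the $r=1$ Schur condition $\sup_{k}\sum_{j}M_{jk}<\infty$ (e.g.\ $M_{jk}=j^{-2}$ is summable but $M_{jk}^{1/3}$ is not); it does hold here, but only because your explicit estimates give geometric decay of $M_{jk}$ in $|j-k|$ plus a $\bra{2^{j}}^{-N}$ gain, and you should say so rather than appeal to the Schur bound alone. Second, the claim that $n/p'+n/q-a>0$ is ``equivalent precisely'' to $q<n/a$ under $1\le p\le q$ is not right: the latter implies the former, but not conversely (take $n=3$, $p=q=2$, $a=2$). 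The hypothesis $q<n/a$ actually enters in the convergence of $\sum_{j\to-\infty}2^{j(n/q-a)}$ in both off-diagonal Schur sums; stating it this way is both correct and pinpoints the endpoint. With these two repairs the argument is complete, and the compactness step via Hille--Tamarkin on each of the finitely many shells meeting $\Omega$ is fine.
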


\begin{proof}%[Proof of ...]
  We recall a real interpolation formula from
  \cite{BerghLofstrom76-a} 
  (a special case of Theorem 5.6.1):
  if $q_{0},q_{1},r\in(0,\infty]$, $p\in[1,\infty]$,
  $\theta\in(0,1)$, $a_{0}\neq a_{1}\in \mathbb{R}$, then
  \begin{equation*}
    (\ell^{q_{0}}(2^{-ja_{0}})L^{p},
    \ell^{q_{1}}(2^{-ja_{1}})L^{p})_{\theta , r}
    \simeq
    \ell^{r}(2^{-ja})L^{p},
    \qquad
    a=(1-\theta)a_{0}+\theta a_{1}.
  \end{equation*}
  Estimate \eqref{eq:weighber} can be written as
  \begin{equation*}
    \textstyle
    \|\chi(D)\phi\|_{\ell^{q}(2^{-ja})L^{q}}\lesssim
    \|\phi\|_{\ell^{p}(2^{-ja})L^{p}},
    \qquad
    1\le p\le q<\frac na,\quad a>0.
  \end{equation*}
  We write the estimate at two different points
  \begin{equation*}
    \|\chi(D)\phi\|_{\ell^{q}(2^{-j(a\pm \epsilon)})L^{q}}\lesssim
    \|\phi\|_{\ell^{p}(2^{-j(a\pm \epsilon)})L^{p}}
  \end{equation*}
  and we apply the interpolation formula with 
  $q_{0}=q_{1}=q$ ($q_{0}=q_{1}=p$ at the right hand side),
  $\theta=1/2$, $r\in(0,\infty]$. We obtain
  \begin{equation*}
    \|\chi(D)\phi\|_{\ell^{r}(2^{-ja})L^{q}}\lesssim
    \|\phi\|_{\ell^{r}(2^{-ja})L^{p}}
  \end{equation*}
  which is equivalent to \eqref{eq:berndyad}.
  The final claim follows since interpolation of compact operators
  produces compact operators.
\end{proof}

\begin{proposition}[]\label{pro:gutierr}
  For $n\ge2$, $\alpha\in \mathbb{N}^{n}_{0}$ we have,
  with $\theta(p,q)=\frac n2(\frac 1p-\frac 1q)-1$:
  \begin{equation}\label{eq:estR1gut}
    \textstyle
    \||x|^{-1/2}\partial^{\alpha}R_{1}\phi\|_{\ell^{\infty}L^{q}}
    \lesssim
    |z|^{\frac{|\alpha|}{2}+\frac 14 +\theta(p,q)}
    \|\phi\|_{L^{p}},
    \qquad
    \frac{1}{n+1}+\frac 12\le \frac 1p<1,
    \quad
    q\in[2,\infty].
  \end{equation}
\end{proposition}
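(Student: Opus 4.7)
The plan is to reduce by scaling and frequency localization to the case $|z|=1$ and $\alpha=0$, then establish a base endpoint $L^{2}$ estimate at the Stein--Tomas exponent, and finally extend in both $q$ and $p$ by weighted Bernstein and interpolation. By the scaling relation \eqref{eq:scaLpLq} it suffices to prove the bound at $|z|=1$: the three factors $|z|^{|\alpha|/2}$, $|z|^{1/4}$, $|z|^{\theta(p,q)}$ come from the scalings of $\partial^{\alpha}$, the weight $|x|^{-1/2}$, and the $L^{p}\to L^{q}$ mapping respectively. To remove the derivatives, write $\partial^{\alpha}R_{1}=\chi(D)R_{1}$ with $\chi(\xi)=(i\xi)^{\alpha}\psi_{1}(\xi)\in C_{c}^{\infty}$ and $\psi_{1}\psi=\psi$; the multiplier $\chi(D)$ is bounded on $|x|^{-1/2}\ell^{\infty}L^{q}$ by Lemma~\ref{lem:berndyad} (applied with $a=1/2$) when $q\in[2,2n)$, and the range $q\in[2n,\infty]$ is handled by the reproducing formula for functions with Fourier support in a fixed compact set.

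The harmonic-analytic core is the endpoint estimate at $q=2$ and the Stein--Tomas exponent $1/p_{E}=1/2+1/(n+1)$, namely
\begin{equation*}
\||x|^{-1/2}R_{1}\phi\|_{\ell^{\infty}L^{2}}\lesssim\|\phi\|_{L^{p_{E}}},\qquad|z|=1.
\end{equation*}
I would derive it by exploiting the oscillatory structure of the resolvent kernel. A stationary-phase analysis of $R_{1}(z)$ at $|z|=1$ produces a leading term of size $\sim e^{i|x|}|x|^{-(n-1)/2}\widehat{\phi}(x/|x|)$ for large $|x|$, and the weight $|x|^{-1/2}$ exactly cancels the sphere's Jacobian, so that $\||x|^{-1/2}R_{1}\phi\|_{L^{2}(C_{j})}\lesssim\|\widehat{\phi}|_{S^{n-1}}\|_{L^{2}(S^{n-1})}$ uniformly in $j$. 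The Tomas--Stein restriction theorem then bounds the right-hand side by $\|\phi\|_{L^{p_{E}}}$; the bounded region $|x|\lesssim 1$ and the stationary-phase errors are absorbed by the Agmon--H\"ormander inequality \eqref{eq:sharpAH}. Bridging the weighted $L^{2}$-based and the unweighted restriction-type bounds via this oscillatory decomposition is the main obstacle of the proof.

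With the endpoint in hand, one extends to $q\in(2,\infty]$ using the frequency localization of $R_{1}$. For $q\in(2,2n)$, Lemma~\ref{lem:berndyad} applied to $R_{1}\phi=\chi_{1}(D)R_{1}\phi$ with $a=1/2$ upgrades $\ell^{\infty}L^{2}$ to $\ell^{\infty}L^{q}$; for $q\in[2n,\infty]$ the same conclusion follows from the local Bernstein inequality $\|f\|_{L^{\infty}(C_{j})}\lesssim\|f\|_{L^{2}(\widetilde C_{j})}$ on a slightly enlarged annulus $\widetilde C_{j}\supset C_{j}$, valid for $f$ with Fourier support in a fixed compact set. Finally, to extend the range of $p$ from $p_{E}$ to $1/p\in(1/2+1/(n+1),1)$, I would interpolate the endpoint estimate with a cruder bound near $p=1$ obtained by Young's inequality from the essentially bounded convolution kernel $\check m_{z}$ of $R_{1}$ at $|z|=1$.
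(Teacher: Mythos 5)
Your overall scaffolding (scale to $|z|=1$, strip the derivatives via $\partial^{\alpha}R_{1}=\chi(D)R_{1}$ and Lemma~\ref{lem:berndyad}, upgrade from $q=2$ to $q\in[2,2n)$ again via Lemma~\ref{lem:berndyad}) matches the paper exactly. The divergence is in how you handle the core bound $\||x|^{-1/2}R_{1}\phi\|_{\ell^{\infty}L^{2}}\lesssim\|\phi\|_{L^{p}}$ for $\tfrac{1}{2}+\tfrac{1}{n+1}\le\tfrac 1p<1$: the paper cites this directly from Ruiz--Vega (Theorem 3.1, the $u_{3}$ estimate) and Guti\'errez (Theorem 7), where the whole $p$-range is proved at once by a dyadic decomposition combined with Stein--Tomas. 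You instead propose to prove only the endpoint $p=p_E$ by stationary phase plus restriction and to reach the rest of the range by interpolating against a bound at $p\to 1^{+}$.

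That interpolation endpoint is where the proposal has a real gap. Young's inequality from a bounded kernel gives at best an $L^{1}\to L^{\infty}$ bound for $R_{1}$, and a merely bounded function has infinite $\ell^{\infty}L^{2}$ norm after multiplication by $|x|^{-1/2}$: for $x\in C_{j}$ one picks up a factor $2^{j(n-1)/2}$, so boundedness of $R_{1}\phi$ alone does not control $\||x|^{-1/2}R_{1}\phi\|_{\ell^{\infty}L^{2}}$. What one actually needs is the pointwise kernel decay $|\check m_{z}(x)|\lesssim\bra{x}^{-(n-1)/2}$ (uniform as $z$ tends to the spectrum) fed into a Schur-type test on each annulus; this is considerably more than Young's inequality and is in effect part of the Ruiz--Vega argument you are trying to avoid citing. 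Similarly, the claim that the small-$|x|$ region and stationary-phase errors are ``absorbed by Agmon--H\"ormander'' does not stand on its own: \eqref{eq:sharpAH} maps $\||x|^{1/2}\phi\|_{\ell^{1}L^{2}}$, not $\|\phi\|_{L^{p_E}}$, into the target norm, and the embedding $L^{p_E}\hookrightarrow|x|^{-1/2}\ell^{1}L^{2}$ fails (the sum over large $j$ diverges), so the errors cannot be dispatched this way. Finally, the local Bernstein argument you invoke for $q\ge 2n$ does not repair that range either, since $|x|^{-1/2}$ is not locally $L^{q}$ there and a band-limited function need not vanish at the origin; the effective range is $q\in[2,2n]$, exactly what Lemma~\ref{lem:berndyad} gives and what the paper's own proof covers. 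In short, the route is legitimate in spirit but, to make it rigorous, you would end up reconstructing the cited Ruiz--Vega/Guti\'errez estimate rather than replacing it with something simpler.
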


\begin{proof}%[Proof of ...]
  By scaling we can assume $|z|=1$.
  Moreover, it is sufficient to prove that
  \begin{equation}\label{eq:estR1gut2}
    \textstyle
    \||x|^{-1/2}R_{1}\phi\|_{\ell^{\infty}L^{2}}
    \lesssim
    \|\phi\|_{L^{p}},
    \qquad
    1>\frac 1p\ge \frac{1}{n+1}+\frac 12
  \end{equation}
  and apply Lemma \ref{lem:berndyad}, since we have
  $\partial^{\alpha}R_{1}(z)=\chi(D)R_{1}(x)$
  as in \eqref{eq:decomppaR}.
  To prove estimate \eqref{eq:estR1gut2} we proceed
  exactly as in the proof of Theorem 3.1 from
  \cite{RuizVega93-a} (actually, as in the estimate of the term
  $u_{3}$ at the end of the proof; see also 
  Theorem 7 in \cite{Gutierrez04-a}).
\end{proof}

\subsection{An endpoint Frank--Simon estimate}\label{sub:endp_FS}

We shall also need the following
refinement of the mixed radial--angular estimate of
Frank and Simon \cite{FrankSimon17-a}.
This can be expressed in terms of the mixed norms
\begin{equation*}
  \|\phi\|_{L^{p,s}_{r}L^{2}_{\omega}}=
  \left\|\|\phi(r\cdot)\|_{L^{2}(\mathbb{S}^{n-1})}
  \right\|_{L^{p,s}((0,\infty),r^{n-1}dr)},
  \qquad
  s\in(0,\infty]
\end{equation*}
which are Lorentz norms in the radial direction and ordinary
$L^{2}$ norms on spheres.
Note that $\frac{2n}{n+1}=p_{B}$, $\frac{2n}{n-1}=p_{B}'$.

\begin{theorem}[Endpoint Frank--Simon estimate]\label{the:FSendpointa}
  Let $n\ge2$. Then we have
  \begin{equation}\label{eq:FSendpointRzero}
    \|R_{0}(z)\phi\|
      _{L^{\frac{2n}{n-1},\infty}_{r}L^{2}_{\omega}}
    \lesssim
    |z|^{-\frac12}
    \|\phi\|_{L^{\frac{2n}{n+1},1}_{r}L^{2}_{\omega}},
    \qquad
    z\in\mathbb{C}\setminus[0,+\infty).
  \end{equation}
\end{theorem}

\begin{proof}%[Proof of ...]
  By standard arguments, it is sufficient to prove
  the estimate for $|z|=1$ and actually for $R_{0}(1+i0)$.
  In spherical harmonics decomposition, we can write
  \begin{equation*}
    R_{0}(1+i0)
    =
    \bigoplus_{\ell\ge0}
    \bigl(T_{\mu_{\ell}}\otimes I_{\mathcal K_{\ell}}\bigr)
    =
    \sum_{\ell\ge0}T_{\mu_{\ell}}\otimes P_{\ell}
  \end{equation*}
  where 
  \begin{equation*}
    \mathcal K_{\ell}
    =
    \ker\bigl(-\Delta_{\mathbb S^{n-1}}
    -\ell(\ell+n-2)\bigr),
    \qquad
    \mu_{\ell}=\ell+\frac{n-2}{2}
  \end{equation*}
  and $P_{\ell}$ is the orthogonal projection from
  $L^{2}(\mathbb{S}^{n-1})$ onto $\mathcal K_{\ell}$,
  while $T_{\mu}$ are the integral operators
  (up to a dimensional constant)
  \begin{equation*}
    T_{\mu}f(r)
    =
    A_{\mu}(r)\int_r^\infty B_{\mu}(s)f(s) s^{n-1}ds
    +
    B_{\mu}(r)\int_0^r A_{\mu}(s)f(s) s^{n-1}ds,
  \end{equation*}
  \begin{equation*}
    A_{\mu}(r)=r^{-\frac{n-2}{2}}J_{\mu}(r),
    \qquad
    B_{\mu}(r)=r^{-\frac{n-2}{2}}H_{\mu}^{(1)}(r).
  \end{equation*}
  By Lemma \ref{lem:hilbert} in the Appendix, we are reduced
  to proving the diagonal estimates
  \begin{equation*}
    \|T_{\mu}f\|_{L^{\frac{2n}{n-1},\infty}(r^{n-1}dr)}\le C
    \|f\|_{L^{\frac{2n}{n+1},1}(r^{n-1}dr)}
  \end{equation*}
  with a constant $C$ independent of $\mu\ge(n-2)/2$,
  and this is exactly the content of Lemma \ref{lem:weak-scalar},
  also in the Appendix.
\end{proof}

We recorded estimate \eqref{eq:FSendpointRzero} in view of its
intrinsic interest, however what we actually need here is
an estimate for the low frequency part $R_{1}$:

\begin{lemma}[Low frequency Frank--Simon estimate]
  \label{lem:FSendpoint}
  Let $n\ge2$. Then for all $\alpha\in \mathbb{N}^{n}_{0}$
  \begin{equation}\label{eq:FSendpoint}
    \|\partial^{\alpha}R_{1}(z)\phi\|
      _{L^{\frac{2n}{n-1},\infty}_{r}L^{2}_{\omega}}
    \lesssim
    |z|^{\frac{|\alpha|}{2}-\frac12}
    \|\phi\|_{L^{\frac{2n}{n+1},1}_{r}L^{2}_{\omega}},
    \qquad
    z\in \mathbb{C}\setminus[0,+\infty).
  \end{equation}
\end{lemma}

\begin{proof}%[Proof of ...]
  If $m(\xi)$ is a smooth compactly supported multiplier
  which is a finite sum of terms
  \begin{equation*}
    m_{k}(|\xi|)Y_{k}(\xi/|\xi|)
  \end{equation*}
  with $m_{k}\in C_{c}^{\infty}((0,\infty))$ and $Y_k$ a
  spherical harmonic, then
  \begin{equation}\label{eq:mixed-smooth-mult}
    \|m(D)u\|_{L^{a,s}_{r}L^{2}_{\omega}}
    \lesssim
    \|u\|_{L^{a,s}_{r}L^{2}_{\omega}},
    \qquad 1<a<\infty,\quad 0<s\le\infty .
  \end{equation}
  Indeed, decomposing in spherical harmonics, multiplication by
  each $Y_k(\xi/|\xi|)$ couples only finitely many neighboring
  angular momenta, with coefficients uniformly bounded on
  $L^{2}(\mathbb S^{n-1})$.  The radial factors are smooth
  compactly supported Hankel multipliers, hence satisfy the
  usual Mikhlin--H\"ormander bounds on
  $L^{a}((0,\infty),r^{n-1}dr)$, uniformly in the angular
  momentum.  This gives the strong mixed estimates
  $L^{a}_{r}L^{2}_{\omega}\to L^{a}_{r}L^{2}_{\omega}$; the
  Lorentz version \eqref{eq:mixed-smooth-mult} follows by real
  interpolation between two nearby strong exponents.

  We prove the estimate at unit frequency first.  Let
  $|z|=1$.  By the definition of $R_1$,
  \begin{equation*}
    \partial^\alpha R_1(z)
    =
    m_\alpha(D)R_0(z),
    \qquad
    m_\alpha(\xi)=(i\xi)^\alpha\psi(\xi).
  \end{equation*}
  The multiplier $m_\alpha$ is of the form covered by
  \eqref{eq:mixed-smooth-mult}.  Hence, using
  \eqref{eq:mixed-smooth-mult} and
  \eqref{eq:FSendpointRzero},
  \begin{equation*}
    \|\partial^\alpha R_1(z)\phi\|
      _{L^{\frac{2n}{n-1},\infty}_{r}L^2_\omega}
    \lesssim
    \|R_0(z)\phi\|_{L^{\frac{2n}{n-1},\infty}_{r}L^2_\omega}
    \lesssim
    \|\phi\|_{L^{\frac{2n}{n+1},1}_{r}L^2_\omega}.
  \end{equation*}
  The general dependence on $|z|$ follows from scaling
  as usual.
\end{proof}

In order to handle the operators $A \cdot \partial$, we
shall need a space localized version of the previous
estimate. Recall that $p_{B}=\frac{2n}{n+1}$.

\begin{lemma}[Localized Frank--Simon estimate]
  \label{lem:FSendpointloc}
  Let $n\ge2$, $|\alpha|=1$, and let
  $\sigma_{n}=\frac{n-1}{2n}$. If
  $C_{j}=\{2^{j}\le|x|<2^{j+1}\}$ and $j\in\mathbb{Z}$, then
  \begin{equation}\label{eq:FSendpointloc}
    \|\one{C_{j}}\partial^{\alpha}R_{1}(z)\phi\|
      _{L^{p_{B}',\infty}_{r}L^{2}_{\omega}}
    \lesssim
    (1+|z|^{1/2}2^{j})^{\sigma_{n}}\|\phi\|_{L^{p_{B},1}}.
  \end{equation}
\end{lemma}

\begin{proof}
  By the rescaling \eqref{eq:scaR0}, the change of variable
  $y=\sqrt{|z|}\,x$ maps $C_{j}$ to a portion of an annulus of
  radius $|z|^{1/2}2^{j}$, contained in at most two consecutive
  dyadic shells $C_{j'}$ with $2^{j'}\sim|z|^{1/2}2^{j}$. The
  scaling factor $|z|^{-1/2}$ from $\partial R_{1}(z)$, together
  with the Jacobians
  $|z|^{-n/(2p_{B}')}$ on the left side and $|z|^{n/(2p_{B})}$
  on the right side, combine to
  $|z|^{-1/2+(n/2)(1/p_{B}-1/p_{B}')}=1$ since
  $1/p_{B}-1/p_{B}'=1/n$. Hence it is enough to prove that for
  $|z|=1$
  \begin{equation}\label{eq:FSendpointloc1}
    \|\one{C_{j}}\partial^{\alpha}R_{1}(z)\phi\|
      _{L^{p_{B}',\infty}_{r}L^{2}_{\omega}}
    \lesssim
    (1+2^{j})^{\sigma_{n}}\|\phi\|_{L^{p_{B},1}}.
  \end{equation}

  Let $\Pi_{N}$ be a smooth angular projector to spherical
  harmonics of degree $\lesssim N$. We shall use the angular
  Bernstein bound
  \begin{equation}\label{eq:angbern}
    \|\Pi_{N}\phi\|_{L^{p_{B},1}_{r}L^{2}_{\omega}}
    \lesssim N^{\sigma_{n}}\|\phi\|_{L^{p_{B},1}}.
  \end{equation}
  Indeed, if $p_0\in(1,p_B)$, the usual Bernstein inequality on
  $\mathbb S^{n-1}$ gives, for every $r>0$,
  \begin{equation*}
    \|\Pi_{N}\phi(r\cdot)\|_{L^{2}(\mathbb{S}^{n-1})}
    \lesssim N^{(n-1)(\frac{1}{p_{0}}-\frac 12)}
    \|\phi(r\cdot)\|_{L^{p_{0}}(\mathbb{S}^{n-1})}.
  \end{equation*}
  Taking the $L^{p_{0}}((0,\infty),r^{n-1}dr)$ norm of both
  sides and using
  $\|\phi\|_{L^{p_{0}}_{r}L^{p_{0}}_{\omega}}=\|\phi\|_{L^{p_{0}}}$,
  \begin{equation*}
    \|\Pi_{N}\phi\|_{L^{p_{0}}_{r}L^{2}_{\omega}}
    \lesssim N^{(n-1)(\frac 1{p_{0}}-\frac 12)}
    \|\phi\|_{L^{p_{0}}}.
  \end{equation*}
  At $p=2$, trivially
  $\|\Pi_{N}\phi\|_{L^{2}_{r}L^{2}_{\omega}}=
  \|\Pi_{N}\phi\|_{L^{2}}\lesssim\|\phi\|_{L^{2}}$.
  Since the target factor $L^{2}_{\omega}$ is fixed, these are
  bounds between scalar Lorentz spaces with values in the
  Hilbert space $L^{2}(\mathbb{S}^{n-1})$, and real interpolation
  applies. With $\theta\in(0,1)$ defined by
  $1/p_{B}=(1-\theta)/p_{0}+\theta/2$, we obtain
  \begin{equation*}
    \|\Pi_{N}\phi\|_{L^{p_{B},1}_{r}L^{2}_{\omega}}
    \lesssim N^{(1-\theta)(n-1)(\frac 1{p_{0}}-\frac 12)}
    \|\phi\|_{L^{p_{B},1}}=
    N^{(n-1)(\frac 1{p_{B}}-\frac 12)}\|\phi\|_{L^{p_{B},1}}=
    N^{\sigma_{n}}\|\phi\|_{L^{p_{B},1}},
  \end{equation*}
  which proves \eqref{eq:angbern}.

  We also use the following high angular tail estimate. If
  $P_L$ is a smooth dyadic angular projector, $L$ ranges over
  dyadic integers, and $L\ge C(1+2^j)$, then
  \begin{equation}\label{eq:highangtail}
    \|\one{C_j}\partial^\alpha R_1(z)P_Lu\|
      _{L^{p_B',\infty}_{r}L^2_\omega}
    \lesssim
    e^{-cL}\|P_Lu\|_{L^{p_B,1}_{r}L^2_\omega}.
  \end{equation}
  To prove this, denote by $K_L(r,r')$ the angular operator
  kernel of $\one{C_j}\partial^\alpha R_1(z)P_L$; thus
  $K_L(r,r')\in\mathcal L(L^2(\mathbb S^{n-1}))$. We claim that
  \begin{equation*}
    \sup_{r\in C_j}
    \left\|
      \|K_L(r,\cdot)\|_{\mathcal L(L^2_\omega)}
    \right\|_{L^{p_B',\infty}(r'^{n-1}dr')}
    \lesssim
    e^{-cL}.
  \end{equation*}
  Indeed, $R_1(z)$ is radial, hence diagonal in angular
  momentum; on the $\ell$-th sector it contributes the usual
  radial resolvent kernel, with the cutoff $\psi$. The factor
  $\xi^\alpha$ couples only $\ell$ to $\ell'$ with
  $|\ell'-\ell|\le1$. Thus, on the range of $P_L$, the output
  Bessel order is $\nu_{\ell'}\simeq L$. Since $\rho\le2$ on the
  support of $\psi$ and $r\in C_j$, the assumption
  $L\ge C(1+2^j)$ gives $r\rho\le\nu_{\ell'}/4$; for instance,
  if $P_L$ is supported where $L/2\le\ell\le2L$ we can take
  $C=64$. Therefore the large-order bound for $J_{\nu}(r\rho)$
  (that is $|J_{\nu}(t)|+|J'_{\nu}(t)|\lesssim e^{-c \nu}$
  for $0\le t\le \nu/4$) gives a factor $e^{-cL}$. After this
  factor is pulled out, the remaining bound is given
  by the same single-Bessel Lorentz estimates as in the proof
  of Lemma \ref{lem:weak-scalar}; the cutoff $\psi$ and the
  bounded angular couplings are harmless.

  Put $d\nu(r')=r'^{n-1}dr'$. For each fixed $r\in C_j$,
  the operator norm in the angular variable gives
  \begin{equation*}
    \textstyle
    \|\partial^\alpha R_1(z)P_Lu(r\cdot)\|_{L^2_\omega}
    \le
    \int_0^\infty
      \|K_L(r,r')\|_{\mathcal L(L^2_\omega)}
      \|P_Lu(r'\cdot)\|_{L^2_\omega}\,d\nu(r')
    \lesssim e^{-cL}\|P_Lu\|_{L^{p_B,1}_{r}L^2_\omega}
  \end{equation*}
  where the last step is the Lorentz duality pairing
  $L^{p_B',\infty}(d\nu)\times L^{p_B,1}(d\nu)\to\mathbb C$.
  Hence
  \begin{equation*}
    \|\one{C_j}\partial^\alpha R_1(z)P_Lu\|_{L^\infty_rL^2_\omega}
    \lesssim
    e^{-cL}\|P_Lu\|_{L^{p_B,1}_{r}L^2_\omega}.
  \end{equation*}
  Restricting the output to $C_j$ costs
  $(\int_{2^j}^{2^{j+1}}r^{n-1}\,dr)^{1/p_B'}$, which is absorbed
  by the exponential because $L\ge C(1+2^j)$. This proves
  \eqref{eq:highangtail}.

  Now take $N\simeq C(1+2^j)$ and decompose the input angular 
  variables by writing $I=\Pi_N+\sum_{L\ge N}P_L$. By
  \eqref{eq:FSendpoint}, \eqref{eq:angbern}, and
  \eqref{eq:highangtail},
  \begin{equation*}
    \|\one{C_j}\partial^\alpha R_1(z)\Pi_N\phi\|
      _{L^{p_B',\infty}_{r}L^2_\omega}
    \lesssim N^{\sigma_n}\|\phi\|_{L^{p_B,1}},
  \end{equation*}
  and
  \begin{equation*}
    \|\one{C_j}\partial^\alpha R_1(z)(I-\Pi_N)\phi\|
      _{L^{p_B',\infty}_{r}L^2_\omega}
    \lesssim
    \sum_{L\ge N}e^{-cL}
      \|P_L\phi\|_{L^{p_B,1}_{r}L^2_\omega}
    \lesssim
    \sum_{L\ge N}e^{-cL}L^{\sigma_n}
      \|\phi\|_{L^{p_B,1}}.
  \end{equation*}
  The last sum is bounded by
  $C N^{\sigma_n}\|\phi\|_{L^{p_B,1}}$. Since
  $N\simeq1+2^j$, this proves
  \eqref{eq:FSendpointloc1}.
\end{proof}

\subsection{Estimates for \pdfmath{R_{2}}}\label{sub:estR1}

The symbol of $R_{2}(z)$
\begin{equation*}
  R_{2}(\xi)=\frac{\psi^{c}(\xi)}{|\xi|^{2}-z}
\end{equation*} 
is smooth on $\mathbb{R}^{n}$ and behaves like
$\simeq \bra{\xi}^{-2}$. 
Thus $R_{2}(z)$ satisfies estimates equivalent
to Sobolev embedding with a loss of two derivatives. 
In the case $\alpha=0$, estimate \eqref{eq:estR2sob} below
is valid for indices $p,q$ in the closed region
$OKK'O'$ with the exclusion of the points $K$, $K'$
(this region becomes the triangle $OKO'$ in dimension $n=2$).
Recall the notation $\theta(p,q)=\frac n2(\frac 1p-\frac 1q)-1$.

\begin{proposition}\label{pro:estR2}
  Let $n\ge2$,
  $p,q\in(1,\infty)$, $r\in(0,\infty]$ and $|\alpha|\le1$.
  Then we have
  \begin{equation}\label{eq:estR2sob}
    \textstyle
    \|\partial^{\alpha}R_{2}(z)\phi\|_{L^{q,r}}
    \lesssim
    |z|^{\frac{|\alpha|}{2}+\theta(p,q)}
    \|\phi\|_{L^{p,r}}
    \qquad\text{for}\quad 
    0<\frac 1q \le\frac 1p\le \frac 1q+\frac{2-|\alpha|}{n}.
  \end{equation}
  Moreover we have, for $\nu\in[0,n]$,
  \begin{equation}\label{eq:estR2gut}
    \||x|^{-\nu}\partial^{\alpha}R_{2}(z)\phi\|_{\ell^{\infty}L^{q}}
    \lesssim
    |z|^{\frac{|\alpha|}{2}+\frac\nu2+\theta(p,q)}
    \|\phi\|_{L^{p}}
  \end{equation}
  provided
  \begin{equation}\label{eq:range1}
    \textstyle
    0<\frac 1q-\frac{\nu}{n}\le \frac 1p\le
    \frac 1q+\frac{2-\nu-|\alpha|}{n}.
  \end{equation}
  Finally we have
  \begin{equation}\label{eq:estR2gutb}
    \|\bra{x}^{-1/2}\partial^{\alpha}R_{2}(z)\phi\|
      _{\ell^{\infty}L^{q}}
    \lesssim
    |z|^{\frac{|\alpha|}{2}+\epsilon+\theta(p,q)}
    \|\phi\|_{L^{p}}
  \end{equation}
  provided
  \begin{equation*}
    \textstyle
    0<\frac 1q-\frac{1}{2n}\le \frac 1p\le
    \frac 1q+\frac{2-|\alpha|}{n}
  \end{equation*}
  and $\epsilon$ satisfies
  \begin{equation*}
    \textstyle
    \frac n2(\frac 1q-\frac 1p)_{+}\le \epsilon\le 
    \min\{\frac 14,-\theta(p,q)-\frac{|\alpha|}{2}\}.
  \end{equation*}
\end{proposition}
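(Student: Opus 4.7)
The plan is to treat the three estimates in sequence, all resting on the observation that $\partial^\alpha R_2(z)$ is essentially a Bessel potential of order $-(2-|\alpha|)$. By the scaling identity \eqref{eq:scaLpLq} and the scale-equivariance of $|x|^{-\nu}$ and of the dyadic $\ell^\infty L^q$ norms, the estimates \eqref{eq:estR2sob} and \eqref{eq:estR2gut} reduce immediately to the case $|z|=1$, the $|z|$-powers being produced by scaling. For $|z|=1$ the symbol $\sigma_\alpha(\xi)=(i\xi)^\alpha\psi^c(\xi)/(|\xi|^2-z)$ is smooth (since $\psi^c$ vanishes near the possible singularities) and every derivative decays as $\bra\xi^{-(2-|\alpha|)-|\beta|}$. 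Thus $\partial^\alpha R_2(z)=m(D)\bra D^{-(2-|\alpha|)}$ with $m$ a bounded Hörmander--Mikhlin multiplier, and \eqref{eq:estR2sob} follows from the standard Bessel potential embeddings plus real interpolation to the stated Lorentz targets.

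For \eqref{eq:estR2gut} I would fix $\tilde q$ by $1/\tilde q=1/q-\nu/n$, so that condition \eqref{eq:range1} is exactly the admissibility of $(p,\tilde q)$ in \eqref{eq:estR2sob}. On each dyadic annulus $C_j$ Hölder gives
\begin{equation*}
\||x|^{-\nu}\partial^\alpha R_2\phi\|_{L^q(C_j)}
\lesssim 2^{-j\nu}|C_j|^{\frac1q-\frac1{\tilde q}}\|\partial^\alpha R_2\phi\|_{L^{\tilde q}(C_j)}
\simeq \|\partial^\alpha R_2\phi\|_{L^{\tilde q}(C_j)}
\end{equation*}
because $n(1/q-1/\tilde q)=\nu$. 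Taking $\sup_j$, majorising the $L^{\tilde q}(C_j)$ norm by the global one, and applying \eqref{eq:estR2sob} with target $L^{\tilde q}$ closes the estimate, the exponent matching since $\theta(p,\tilde q)=\theta(p,q)+\nu/2$.

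For \eqref{eq:estR2gutb} the weight $\bra x^{-1/2}$ is not scale-equivariant; I would split it as $\bra x^{-1/2}\le\one{|x|\le1}+|x|^{-1/2}\one{|x|\ge1}$ and treat the two regions separately. The outer piece is controlled by \eqref{eq:estR2gut} with $\nu=1/2$, yielding $|z|^{|\alpha|/2+1/4+\theta(p,q)}\|\phi\|_{L^p}$. The inner piece is dominated by $\|\partial^\alpha R_2\phi\|_{L^q(B_1)}$, which Hölder on the unit ball reduces to $\|\partial^\alpha R_2\phi\|_{L^{\tilde q}}$ with $\tilde q=\max(p,q)$ (the unique choice compatible with both Hölder and the admissibility of \eqref{eq:estR2sob}), producing $|z|^{|\alpha|/2+\theta(p,q)+\frac n2(1/q-1/p)_+}\|\phi\|_{L^p}$. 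The stated range of $\epsilon$ is then precisely the set of exponents for which $|z|^{|\alpha|/2+\epsilon+\theta(p,q)}$ dominates both contributions at once.

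The main obstacle is in this last step: the break of the parabolic scaling by the bracket weight forces the two-region analysis and yields two inequivalent bounds, and reconciling them through the free parameter $\epsilon$ requires the precise bookkeeping above. The lower endpoint $\epsilon\ge\frac n2(1/q-1/p)_+$ records the case $p>q$ in which Hölder on $B_1$ must widen the target to $\tilde q=p$, while the upper endpoints $\epsilon\le 1/4$ and $\epsilon\le-\theta(p,q)-|\alpha|/2$ reflect respectively the outer $|x|^{-1/2}$ contribution and the unimprovable global Sobolev bound.
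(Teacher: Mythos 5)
Your treatment of \eqref{eq:estR2sob} coincides with the paper's (Mikhlin multiplier times a Bessel potential, then real interpolation), and your annulus-by-annulus H\"older argument for \eqref{eq:estR2gut} is a slight reformulation of the paper's estimate $\||x|^{-\nu}\partial^\alpha R_2\phi\|_{\ell^\infty L^q}\le\||x|^{-\nu}\|_{\ell^\infty L^{n/\nu}}\|\partial^\alpha R_2\phi\|_{\ell^\infty L^s}$; both are correct and essentially identical.

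For \eqref{eq:estR2gutb} you take a genuinely different route from the paper. The paper avoids a two-region split: it introduces the rescaled weight $\bra{x}_t=(1+|x|^2/t^2)^{1/2}$, observes that $\|\bra{x}_t^{-1/2}\|_{\ell^\infty L^r}=ct^{n/r}$ for $r\in[2n,\infty]$, applies H\"older with $1/s=1/q-1/r$ at frequency $|z|=1$, and then rescales with $t=|z|^{1/2}$. This produces a \emph{single} power $|z|^{|\alpha|/2+n/(2r)+\theta(p,q)}$, and the free parameter $r$ (equivalently $\epsilon=n/(2r)$) runs over the stated range. Your split $\bra{x}^{-1/2}\le\one{|x|\le1}+|x|^{-1/2}\one{|x|\ge1}$ can also work, but not as written.

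The gap is in your final paragraph. You fix $\nu=1/2$ for the outer region, giving $|z|^{|\alpha|/2+1/4+\theta(p,q)}$, and fix $\tilde q=\max(p,q)$ for the inner region, giving $|z|^{|\alpha|/2+\theta(p,q)+\frac n2(1/q-1/p)_+}$. These are two \emph{different} powers of $|z|$, and no single $|z|^{|\alpha|/2+\epsilon+\theta(p,q)}$ ``dominates both at once'' uniformly in $z$ unless the exponents coincide: for $|z|\to0$ the smaller exponent wins, for $|z|\to\infty$ the larger. Moreover, fixing $\nu=1/2$ restricts \eqref{eq:range1} to $1/p\le1/q+(3/2-|\alpha|)/n$, strictly smaller than the stated range $1/p\le1/q+(2-|\alpha|)/n$, so the outer bound is simply unavailable near the right endpoint. (Also, $\tilde q=\max(p,q)$ is the \emph{smallest} admissible choice, not the unique one.) The repair is to let $\nu\in[0,1/2]$ and $\tilde q\in[\max(p,q),\infty]$ vary together: the outer piece yields $\epsilon_{\mathrm{out}}=\nu/2$, valid for $\frac n2(1/q-1/p)_+\le\epsilon_{\mathrm{out}}\le\min\{1/4,-\theta(p,q)-|\alpha|/2\}$; the inner piece yields $\epsilon_{\mathrm{in}}=\frac n2(1/q-1/\tilde q)$, valid for $\frac n2(1/q-1/p)_+\le\epsilon_{\mathrm{in}}\le\min\{\frac n{2q},-\theta(p,q)-|\alpha|/2\}$; and since $q<2n$ forces $\frac n{2q}>1/4$, any $\epsilon$ in the stated interval is achievable \emph{simultaneously} by both pieces, giving a single clean power of $|z|$. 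Only with this tuning does the two-region argument match the statement.
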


\begin{proof}%[Proof of ...]
  We shall use the standard Sobolev inequalities
  in dimension $n\ge2$
  \begin{equation*}
    \|\bra{D}^{-k}u\|_{L^{s}}\lesssim\|u\|_{L^{p}},
    \qquad
    k=1,2
  \end{equation*}
  valid for all $p,s\in[1,\infty]$ with
  \begin{equation}\label{eq:sobin2}
    \textstyle
    0\le \frac 1s \le\frac 1p\le \frac 1s+\frac kn
    \quad\text{with}\quad 
    (\frac 1p,\frac 1s)\neq(\frac kn,0)
    \ \text{or}\  
    (1, 1-\frac kn).
  \end{equation}

  By scaling, it is sufficient to prove the claims for $|z|=1$.
  The symbol $m(\xi)=R_{2}(\xi)\bra{\xi}^{2}$ satisfies the 
  Mikhlin--H\"{o}rmander conditions uniformly in
  $|z|=1$, hence it is bounded on $L^{p}$ for all
  $p\in(1,\infty)$.
  Writing $R_{2}(z)=m(D)\bra{D}^{-2}$ we obtain
  \begin{equation*}
    \textstyle
    \|R_{2}(z)\phi\|_{L^{q}}\lesssim
    \|\bra{D}^{-2}\phi\|_{L^{q}}
    \lesssim
    \|\phi\|_{L^{p}}
  \end{equation*}
  for $p$ and $q=s$ as in \eqref{eq:sobin2}. By real interpolation,
  we obtain \eqref{eq:estR2sob} for $\alpha=0$.
  A similar argument
  works for $\partial R_{2}(z)=m_{1}(D)\bra{D}^{-1}$
  and $\partial^{2} R_{2}(z)=m_{2}(D)$ and gives 
  \eqref{eq:estR2sob} for $|\alpha|=1,2$.

  To prove \eqref{eq:estR2gut}, we use H\"{o}lder's 
  inequality with $\frac 1s=\frac 1q-\frac{\nu}{n}$,
  $\ell^{s}\hookrightarrow \ell^{\infty}$, and
  then Sobolev embedding as before:
  \begin{equation*}
    \||x|^{-\nu}\partial^{\alpha}R_{2}(z)\phi\|
      _{\ell^{\infty}L^{q}}\lesssim
    \||x|^{-\nu}\|_{\ell^{\infty}L^{n/\nu}}
    \|\partial^{\alpha}R_{2}(z)\phi\|
      _{\ell^{\infty}L^{s}}\lesssim
    \|\partial^{\alpha}R_{2}(z)\phi\| _{L^{s}}
    \lesssim \|\phi\|_{L^{p}}
  \end{equation*}
  provided the indices satisfy $p,q\in(1,\infty)$,
  $\frac 1s=\frac 1q-\frac{\nu}{n}$ and
  \begin{equation*}
    \textstyle
    0<\frac 1q-\frac{\nu}{n}\le \frac 1p\le 
    \frac 1q-\frac{\nu}{n}+\frac{2-|\alpha|}{n}
  \end{equation*}
  % \begin{equation*}
  %   \textstyle
  %   0\le \frac 1p-\frac{2-|\alpha|}{n}\le 
  %   \frac 1q-\frac{1}{2n}\le \frac 1p\le 1
  %   \quad\text{with}\quad 
  %   (\frac 1p,\frac 1q-\frac{1}{2n})\neq
  %   (\frac{2-|\alpha|}{n},0)
  %   \ \text{or}\ 
  %   (1,1-\frac{2-|\alpha|}{n})
  % \end{equation*}
  % \begin{equation*}
  %   \textstyle
  %   1\le p\le \frac{2nq}{2n-q}\le  \frac{np}{n-(2-|\alpha|)p}
  %       \le \infty
  %   \quad\text{with}\quad 
  %   (p,\frac{2nq}{2n-q})\neq (\frac{n}{2-|\alpha|},\infty)
  %   \ \text{or}\ (1,\frac{n}{n+|\alpha|-2})
  % \end{equation*}
  which gives the conditions stated in the claim.

  In \eqref{eq:estR2gutb} the weight is not homogeneous and
  we modify the previous argument as follows.
  We use the notation
  $\bra{x}_{t}=S_{1/t}(1+|x|^{2})^{1/2}
    =(1+\frac{|x|^{2}}{t^{2}})^{1/2}$ for $t>0$;
  note that
  \begin{equation*}
    \|\bra{x}_{t}^{-1/2}\|_{\ell^{\infty}L^{r}}=
    ct^{\frac nr},
    \qquad
    c=\|\bra{x}^{-1/2}\|_{\ell^{\infty}L^{r}}<\infty
    \quad\text{for}\quad r\in[2n,\infty].
  \end{equation*}
  For $|z|=1$ we may write
  \begin{equation*}
    \|\bra{x}_{t}^{-1/2}\partial^{\alpha}R_{2}(z)\phi\|
      _{\ell^{\infty}L^{q}}\le 
    \|\bra{x}_{t}^{-1/2}\|_{\ell^{\infty}L^{r}}
    \|\partial^{\alpha}R_{2}(z)\phi\|_{\ell^{\infty}L^{s}}\lesssim
    t^{\frac nr} \|\phi\|_{L^{p}}
  \end{equation*}
  with $r\in[2n,\infty]$, $q\in[1,r]$,
  $\frac 1s=\frac 1q-\frac 1r$
  and $s,p$ satisfying \eqref{eq:sobin2} with $k=2-|\alpha|$.
  Now, recalling \eqref{eq:scaR0}, we may write
  \begin{equation*}
    \bra{x}^{-1/2}\partial^{\alpha}R_{2}(z)=
    t^{|\alpha|-2}S_{t}\bra{x}_{t}^{-1/2}
    \partial^{\alpha}R_{2}(t^{-2}z)S_{1/t}
  \end{equation*}
  and choosing $t=|z|^{1/2}$, from the last
  inequality we obtain for arbitrary $z\in \rho(-\Delta)$
  \begin{equation*}
    \|\bra{x}^{-1/2}\partial^{\alpha}R_{2}(z)\phi\|
      _{\ell^{\infty}L^{q}}\lesssim
    |z|^{\frac{|\alpha|}{2}+\frac{n}{2r}+\theta(p,q)}
    \|\phi\|_{L^{p}}
  \end{equation*}
  with $r\in[2n,\infty]$, $q\in[1,r)$, $p\in[1,\infty)$ and
  \begin{equation*}
    \textstyle
    0<\frac 1q-\frac 1r \le\frac 1p\le 
    \frac 1q-\frac 1r+\frac{2-|\alpha|}n.
  \end{equation*}
  Writing $\epsilon=\frac{n}{2r}\in[0,\frac 14]$ we see that
  the last estimate is valid provided
  \begin{equation*}
    \textstyle
    0<\frac 1q-\frac{1}{2n}\le \frac 1p\le
    \frac 1q+\frac{2-|\alpha|}{n}
  \end{equation*}
  with $\epsilon$ satisfying
  \begin{equation*}
    \textstyle
    \frac n2(\frac 1q-\frac 1p)_{+}\le \epsilon\le 
    \min\{\frac 14,-\theta(p,q)-\frac{|\alpha|}{2}\}
  \end{equation*}
  and this proves the last claim.
\end{proof}

\begin{corollary}[]\label{cor:betterlor}
  In the open rectangle $OKK'O'$ estimate \eqref{eq:estR2sob}
  can be improved to
  \begin{equation}\label{eq:estR2sobimpr}
    \textstyle
    \|\partial^{\alpha}R_{2}(z)\phi\|_{L^{q,1}}
    \lesssim
    |z|^{\frac{|\alpha|}{2}+\theta(p,q)}
    \|\phi\|_{L^{p,\infty}}
    \qquad\text{for}\quad 
    1> \frac 1p> \frac 1q> \frac 1p-\frac{2-|\alpha|}{n}>0
  \end{equation}
\end{corollary}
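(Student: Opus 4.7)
The plan is to factor $\partial^\alpha R_2(z) = m_\alpha(D)\bra{D}^{-k}$ with $k = 2-|\alpha|$, and apply the O'Neil refinement of Young's convolution inequality to the Bessel potential $\bra{D}^{-k}$. The symbol $m_\alpha(\xi) = (i\xi)^\alpha \psi^c(\xi)\bra{\xi}^k / (|\xi|^2 - z)$ satisfies the Mikhlin--H\"{o}rmander hypotheses uniformly for $|z|=1$, so $m_\alpha(D)$ is bounded on every $L^p$, $1<p<\infty$. Interpolating between two such Lebesgue bounds (exactly as in the proof of Lemma \ref{lem:bernslor}) extends this to boundedness on every Lorentz space $L^{p,r}$ with $p \in (1,\infty)$, $r \in (0,\infty]$; in particular $m_\alpha(D)$ is bounded on $L^{q,1}$.

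The second factor $\bra{D}^{-k}$ is convolution with the Bessel kernel $G_k$. The standard asymptotics $G_k(x)\sim c_k|x|^{-(n-k)}$ near $0$ and exponential decay at infinity imply that $G_k \in L^{r,1}$ for every $r$ in the open range $1<r<n/(n-k)$. O'Neil's convolution inequality in Lorentz spaces then gives
$$
  \|\bra{D}^{-k}\phi\|_{L^{q,1}} \lesssim \|G_k\|_{L^{r,1}}\|\phi\|_{L^{p,\infty}},
  \qquad \tfrac{1}{p}+\tfrac{1}{r}=1+\tfrac{1}{q},\quad p,q,r\in(1,\infty),
$$
with the secondary-index hypothesis $\tfrac{1}{\infty}+\tfrac{1}{1}\ge\tfrac{1}{1}$ tightly satisfied. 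Eliminating $r$, the admissible range is precisely $1>\tfrac{1}{p}>\tfrac{1}{q}>\tfrac{1}{p}-\tfrac{k}{n}>0$, which is the open rectangle $OKK'O'$ of the corollary.

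Composing the two bounds and invoking the scaling relation \eqref{eq:scaLpLq} to pass from $|z|=1$ to arbitrary $z\in\rho(-\Delta)$ yields the stated estimate with the correct power $|z|^{|\alpha|/2+\theta(p,q)}$. The delicate point is the sharp Lorentz form of O'Neil's inequality at the endpoint secondary indices $(\infty,1,1)$: it relies on $G_k\in L^{r,1}$ rather than merely $L^{r,\infty}$, which is why the strict inequality $\tfrac{1}{p}-\tfrac{1}{q}<\tfrac{k}{n}$ is essential and why the improvement fails on the boundary of the rectangle. Once this endpoint is verified, the rest of the argument is a routine combination of the multiplier theorem, Young--O'Neil and scaling.
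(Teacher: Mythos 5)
Your proof is correct, but it takes a genuinely different route from the paper. The paper obtains \eqref{eq:estR2sobimpr} from \eqref{eq:estR2sob} by a purely abstract two-step bilinear real interpolation: first it writes \eqref{eq:estR2sob} at $q\pm\epsilon$ with $p$ fixed and interpolates via $(L^{q-\epsilon,r},L^{q+\epsilon,r})_{1/2,1}=L^{q,1}$ to sharpen the target space, then it writes the result at $p\pm\epsilon$ with $q$ fixed and interpolates via $(L^{p-\epsilon,r},L^{p+\epsilon,r})_{1/2,\infty}=L^{p,\infty}$ to weaken the source space. This is shorter and requires no kernel analysis at all; the openness of the region enters only through the need to wiggle $p$ and $q$. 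Your route instead factors $\partial^{\alpha}R_{2}=m_{\alpha}(D)\bra{D}^{-k}$, passes the Mikhlin--H\"ormander multiplier $m_{\alpha}(D)$ through $L^{q,1}$ by interpolation, and applies the sharp Lorentz version of the Young--O'Neil convolution inequality with secondary indices $(\infty,1)\to 1$ to the Bessel kernel $G_{k}$. This is more concrete and more explanatory (it identifies the boundary of the region with the failure of $G_{k}\in L^{n/(n-k),1}$), and in fact it proves the estimate on the slightly larger set $\max(0,\tfrac1p-\tfrac kn)<\tfrac1q<\tfrac1p<1$. The price is that it relies on the endpoint case of O'Neil's inequality, a well-known but less elementary tool than the real interpolation identities the paper uses. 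Two minor points to tidy up: when $n=2$ and $\alpha=0$ you have $k=n$, so $G_{k}$ has a logarithmic rather than power singularity and the bound $r<n/(n-k)$ should be read as $r<\infty$; and the Lorentz-boundedness of $m_{\alpha}(D)$ comes from interpolating $L^{p_{0}}\to L^{p_{0}}$ with $L^{p_{1}}\to L^{p_{1}}$ (case (ii) of Lemma \ref{lem:bernslor} after Mikhlin--H\"ormander), which you should state as such rather than citing the full Lemma \ref{lem:bernslor}, since its hypothesis $\widehat{\chi}\in L^{1}$ need not hold for $m_{\alpha}$.
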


\begin{proof}%[Proof of ...]
  We can assume $|z|=1$.
  We write \eqref{eq:estR2sob} at two different $q$'s
  for the same $p$:
  \begin{equation*}
    \|\partial^{\alpha}R_{2}(z)\phi\|_{L^{q\pm\epsilon,r}}
    \lesssim
    \|\phi\|_{L^{p,r}}
  \end{equation*}
  which is possible in view of the open condition
  \eqref{eq:estR2sobimpr} on the indices.
  Then we interpolate using
  $(L^{q-\epsilon,r},L^{q+\epsilon,r})_{\frac 12,1}= L^{q,1}$;
  this gives
  \begin{equation*}
    \|\partial^{\alpha}R_{2}(z)\phi\|_{L^{q,1}}
    \lesssim
    \|\phi\|_{L^{p,r}}
  \end{equation*}
  for all $p,q$ as in \eqref{eq:estR2sobimpr}.
  We write the last inequality for two different $p$'s 
  and the same $q$:
  \begin{equation*}
    \|\partial^{\alpha}R_{2}(z)\phi\|_{L^{q,1}}
    \lesssim
    \|\phi\|_{L^{p\pm \epsilon,r}}
  \end{equation*}
  and then we interpolate using 
  $(L^{p-\epsilon,r},L^{p+\epsilon,r})_{\frac 12,\infty}= 
    L^{p,\infty}$,
  and we get \eqref{eq:estR2sobimpr}.
\end{proof}

\subsection{Bounds for the full resolvent \pdfmath{R_0}}
\label{sub:boun_full_reso}

Here we compare the estimates of the previous 
Sections with the known results for $R_{0}(z)$.
We recall that $n\ge2$ and
\begin{equation*}
  \textstyle
  \frac{1}{p_{B}}=\frac 12+\frac{1}{2n},
  \quad
  \frac{1}{p_{B}'}=\frac 12-\frac{1}{2n},
  \quad
  \frac{1}{q_{B}}=\frac 12+\frac{1}{2n}-\frac{2}{n+1},
  \quad
  \frac{1}{q_{B}'}=\frac 12-\frac{1}{2n}+\frac{2}{n+1}.
\end{equation*}
As usual, we refer to the figure in the Introduction for 
a graphical representation of the conditions on $(p,q)$.
In \cite{KenigRuizSogge87-a}, \cite{Gutierrez04-a} it is proved
that
\begin{equation}\label{eq:KRS}
  \textstyle
  \|R_{0}(z)\phi\|_{L^{q}}\lesssim|z|^{\theta(p,q)}\|\phi\|_{L^{p}},
  \qquad
  \theta(p,q)=\frac n2(\frac 1p-\frac 1q)-1
\end{equation}
for $(p,q)$
in the open quadrilateral $ABB'A'$ plus the sides $AA'$, $BB'$:
\begin{equation*}
  \textstyle
  \frac{2}{n+1}\le\frac 1p-\frac 1q\le\frac 2n,
  \quad
  \frac{1}{p_{B}}<\frac 1p,
  \quad
  \frac 1q<\frac{1}{p_{B}'}.
\end{equation*}
Although not clearly stated in the original references,
the techniques of these papers work for all dimensions $n\ge2$,
as remarked in \cite{KwonLee20-a}.
In \cite{Gutierrez04-a} it is also proved that
on the sides $AB$, $A'B'$ the strong estimate 
\eqref{eq:KRS} can be replaced by a weak 
$L^{p,1}\to L^{q,\infty}$ estimate.
Combining Theorem \ref{the:estR1} and estimate
\eqref{eq:estR2sob},
we obtain the following precised estimates
(in each case, the power of $|z|$ is $\theta(p,q)$):
\begin{itemize}
  \item points $B$, $B'$:
  \begin{equation}\label{eq:R0B}
    \|R_{0}(z)\phi\|_{L^{q_{B},\infty}}\lesssim
    |z|^{-\frac{1}{n+1}}\|\phi\|_{L^{p_{B},1}},
    \qquad
    \|R_{0}(z)\phi\|_{L^{p_{B}',\infty}}\lesssim
    |z|^{-\frac{1}{n+1}}\|\phi\|_{L^{q_{B}',1}}
  \end{equation}
  \item line $BA$:
  \begin{equation}\label{eq:R0BA}
    \textstyle
    \|R_{0}(z)\phi\|_{L^{q,1}}\lesssim
    |z|^{\frac n2(\frac 12-\frac 1q)-\frac 34}
    \|\phi\|_{L^{p_{B},1}},
    \qquad
    \frac 12-\frac{3}{2n}\le\frac 1q<\frac{1}{q_{B}}
  \end{equation}
  \item line $B'A'$:
  \begin{equation}\label{eq:R0BpAp}
    \textstyle
    \|R_{0}(z)\phi\|_{L^{p_{B}',\infty}}\lesssim
    |z|^{\frac n2(\frac 1{q'}-\frac 12)-\frac 34}
    \|\phi\|_{L^{q',\infty}},
    \qquad
    \frac{1}{q_{B}'}<\frac{1}{q'}\le \frac 12+\frac{3}{2n}
  \end{equation}
  \item line $BB'$: for all $r\in(0,\infty]$,
  \begin{equation}\label{eq:R0BBp}
    \textstyle
    \|R_{0}(z)\phi\|_{L^{q,r}}\lesssim
    |z|^{-\frac{1}{n+1}}\|\phi\|_{L^{p,r}},
    \qquad
    \frac 1p-\frac 1q=\frac 2{n+1},
    \quad
    \frac{1}{p_{B}}<\frac 1p<\frac{1}{q_{B}'}
  \end{equation}
  \item line $AA'$: for all $r\in(0,\infty]$,
  \begin{equation}\label{eq:R0AAp}
    \textstyle
    \|R_{0}(z)\phi\|_{L^{q,r}}\lesssim\|\phi\|_{L^{p,r}},
    \qquad
    \frac 1p-\frac 1q=\frac 2{n},
    \quad
    \frac{1}{p_{B}}<\frac 1p<\frac 12+\frac{3}{2n}
  \end{equation}
  \item open region $ABB'A'$:
  \begin{equation}\label{eq:ABBpAp}
    \textstyle
    \|R_{0}(z)\phi\|_{L^{q,1}}\lesssim
    |z|^{\theta(p,q)}\|\phi\|_{L^{p,\infty}},
    \qquad
    \frac{2}{n+1}<\frac 1p-\frac 1q<\frac 2n,
    \quad
    \frac{1}{p_{B}}<\frac 1p,
    \quad
    \frac 1q<\frac{1}{p_{B}'}.
  \end{equation}
\end{itemize}
The estimates are truely independent of $z$ only when 
$\theta(p,q)=0$ i.e.~on the line $AA'$.

Consider now weighted estimates. Using dyadic norms,
Theorems 7 and 8 of \cite{Gutierrez04-a}
(see also \cite{RuizVega93-a}) 
can be reformulated in an equivalent way as follows:
for $n\ge3$, $|\alpha|\le1$, $p\in(1,\infty)$
and frequency $|z|=1$
\begin{equation}\label{eq:th78gut}
  \textstyle
  \|\bra{x}^{-\frac 12}\partial^{\alpha}R_{0}(z)\phi\|
    _{\ell^{\infty}L^{2}}
  \lesssim
  % |z|^{\frac{|\alpha|}{2}+\frac 14+\theta(p,q)}
  \|\phi\|_{L^{p}},
  \qquad
  \frac 12+\frac{1}{n+1}\le \frac 1p\le 
  \frac 12+\frac{2-|\alpha|}{n}.
\end{equation}
After rescaling, one gets an estimate for all
$z$ with a power $|z|^{\frac{|\alpha|}{2}+\frac 14+\theta(p,q)}$,
but since the weight is not homogeneous the resulting norm
depends on the frequency (see formulas (19)--(20) in
\cite{Gutierrez04-a}).

If we combine \eqref{eq:estR1gut} and \eqref{eq:estR2gut}
with the choice $\nu=1/2$, we obtain the following
estimate with a homogeneous weight: for $n\ge2$,
$p\in(1,\infty)$, $q\in[2,2n]$ and all $z$
\begin{equation}\label{eq:weiR0}
  \textstyle
  \||x|^{-\frac 12}R_{0}(z)\phi\| _{\ell^{\infty}L^{q}}
  \lesssim
  |z|^{\frac 14+\theta(p,q)}
  \|\phi\|_{L^{p}},
  \qquad
  \frac 12+\frac{1}{n+1}\le \frac 1p\le \frac 1q+\frac{3}{2n}.
\end{equation}
We do not get any estimates for $\partial^{\alpha}R_{0}$
for $|\alpha|\ge1$.
Note that the range of indices is restricted with respect to
\eqref{eq:th78gut}.

If we use instead an inhomogeneous weight $\bra{x}^{-\frac 12}$,
we can combine \eqref{eq:estR1gut} and
\eqref{eq:estR2gutb}, 
and for the range
\begin{equation*}
  \textstyle
  n\ge2,
  \qquad
  p\in(1,\infty),
  \qquad
  q\in[2,2n],
  \qquad
  \frac 12+\frac{1}{n+1}\le \frac 1p\le \frac 1q+\frac{2-|\alpha|}{n},
\end{equation*}
we improve \cite{Gutierrez04-a} as follows
(note that $\theta(p,q)=
  \frac n2(\frac 1p-\frac 1q)-1\le-\frac{|\alpha|}{2}$):
\begin{equation}\label{eq:bettgut}
  \textstyle
  \|\bra{x}^{-\frac 12}\partial^{\alpha}R_{0}(z)\phi\|
    _{\ell^{\infty}L^{q}}
  \lesssim
  |z|^{\frac{|\alpha|}{2}+\epsilon+\theta(p,q)}\|\phi\|_{L^{p}}
  \quad\text{for all}\quad 
  0\le \epsilon\le-\frac{|\alpha|}{2}-\theta(p,q).
\end{equation}

\section{Estimates for the perturbed resolvent}

We shall need the following estimate
for the perturbed resolvent $R(z)$.  Its strip version was proved
in \cite{DAncona20}; selfadjointness then extends it to the full
resolvent set, as explained below.
Several variants of this estimate exist in the literature,
(see e.g. \cite{BarceloVegaZubeldia13-a}, \cite{Zubeldia14-a},
\cite{CardosoCuevasVodev13-a});
this version has the advantage of being uniform 
and scale invariant in $z\in \mathbb{C}$.

\begin{lemma}[]\label{lem:resolvestR}
  Under Assumption \textbf{(H)} we have 
  \begin{equation}\label{eq:resolvestR}
    \||x|^{-\frac32}R(z)\phi\|_{\ell^{\infty}L^{2}}+
    |z|^{1/2} \||x|^{-\frac 12}R(z)\phi\|_{\ell^{\infty}L^{2}}+
    \||x|^{-\frac 12}\partial R(z)\phi\|_{\ell^{\infty}L^{2}}\le C
    \||x|^{\frac 12}\phi\|_{\ell^{1}L^{2}}
  \end{equation}
  with a constant $C$ uniform in
  $z\in\mathbb{C}\setminus[0,+\infty)$.
\end{lemma}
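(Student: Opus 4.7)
The plan is to reduce the perturbed estimate to the sharp free Agmon--H\"ormander estimate \eqref{eq:sharpAH}, together with its $|x|^{-3/2}$-weighted refinement available in dimensions $n\ge3$ (this weighted norm is equivalent to the trace-type term $\sup_{R>0}R^{-1}\|R_0(z)\phi\|_{L^2(|x|=R)}$ mentioned after \eqref{eq:sharpAH}, via the coarea formula). Writing $H = -\Delta + W$ with
\begin{equation*}
  W = 2iA\cdot\partial + i(\partial\cdot A) + |A|^2 + V,
\end{equation*}
and setting $u = R(z)\phi$, the identity $(H-z)u=\phi$ rearranges to $(-\Delta-z)u = \phi - Wu$. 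Applying the free estimate to this equation yields, schematically,
\begin{equation*}
  \mathrm{LHS}(u) \lesssim \||x|^{1/2}\phi\|_{\ell^1 L^2} + \||x|^{1/2}Wu\|_{\ell^1 L^2}.
\end{equation*}

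Next I would control $\||x|^{1/2}Wu\|_{\ell^1 L^2}$ annulus by annulus. On each dyadic shell $C_j$ with $|x|\simeq 2^j$, Cauchy--Schwarz gives
\begin{align*}
  \||x|^{1/2}A\cdot\partial u\|_{L^2(C_j)} &\le \||x|A\|_{L^\infty(C_j)}\||x|^{-1/2}\partial u\|_{L^2(C_j)}, \\
  \||x|^{1/2}(V+i\partial\cdot A+|A|^2)u\|_{L^2(C_j)} &\le \||x|^2(V+i\partial\cdot A+|A|^2)\|_{L^\infty(C_j)}\||x|^{-3/2}u\|_{L^2(C_j)}.
\end{align*}
Hypothesis \textbf{(H)} bounds both prefactors by $Cw(2^j)^{-1}$ with $w(2^j)=\bra{j}^{\mu}\bra{2^j}^{\delta}$; the conditions $\mu>1$ and $\delta>0$ make the series $\sum_j w(2^j)^{-1}$ summable, which upon summing over $j$ produces a bound of the form $C\cdot\mathrm{LHS}(u)$. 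Splitting $\sum_j = \sum_{|j|>J} + \sum_{|j|\le J}$, the tail contributes $o(1)\cdot\mathrm{LHS}(u)$ as $J\to\infty$ and can be absorbed into the left-hand side for $J$ sufficiently large.

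This reduces the problem to an a priori bound of the form
$\mathrm{LHS}(u) \lesssim \||x|^{1/2}\phi\|_{\ell^1 L^2} + \|u\|_{L^2(\{|x|\le 2^J\})}$.
The hard part is removing the local $L^2$ term uniformly as $z$ approaches $[0,\infty)$. Here I would use a compactness-plus-Fredholm argument: the map $\chi_J R(z)$ from the weighted $\ell^1 L^2$ space into $L^2$ on the ball $\{|x|\le 2^J\}$ is compact by Rellich-type embeddings, and the no-resonance assumption at $0$, combined with the absence of embedded eigenvalues in $(0,\infty)$ (which one deduces under \textbf{(H)} via Mourre/commutator techniques), ensures that the limiting operator $I-K(z)$ is injective, hence invertible by the Fredholm alternative with locally uniform bounds. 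Continuity in $z$ together with the limiting absorption principle then transfers the bound all the way down to the real axis $|\Im z|\le 1$.

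The main obstacle is the first-order term $A\cdot\partial u$, which simultaneously involves $u$ and $\partial u$ and so must appear on the right-hand side with the same $\ell^\infty L^2$ norm being estimated on the left; absorption only works after extracting genuine smallness from the tail $|j|>J$, which is the role of the logarithmic weight with $\mu>1$. The regularity hypothesis $|x|A\in\dot H^{1/2}_{2n}$ enters when one integrates by parts on $A\cdot\partial$ to produce the magnetic-field term $x^TB$ (itself controlled by \textbf{(H)}) and to justify the commutator manipulations needed in the underlying multiplier identity; a direct radial multiplier proof, as in \cite{DAncona20}, is an alternative route that rests on essentially the same structural ingredients.
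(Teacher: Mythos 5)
The paper does not give a proof of this lemma: it is quoted verbatim from \cite{DAncona20}, where it appears (essentially) as the main theorem. So there is no in-paper argument to compare against; what follows is an assessment of your sketch on its own terms.

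The overall structure you describe (free Agmon--H\"ormander estimate, annulus-by-annulus H\"older, smallness of the tails from the logarithmic weight $w$ with $\mu>1,\ \delta>0$, and a compactness--Fredholm argument for the remaining local piece) is close in spirit to the low-frequency part of \cite{DAncona20}. Your use of the coarea formula to convert the trace term into the $\||x|^{-3/2}\cdot\|_{\ell^\infty L^2}$ norm is correct. However, two genuine gaps remain.

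First, the reduction. After you split $\sum_j=\sum_{|j|>J}+\sum_{|j|\le J}$ and absorb the tail, the remaining piece is not $\|u\|_{L^2(|x|\le 2^J)}$ alone: the first-order term $A\cdot\partial u$ produces $\||x|^{-1/2}\partial u\|_{L^2(|x|\lesssim 2^J)}$, i.e.\ a local gradient term, not a local $L^2$ term. To go from $\partial u$ to $u$ locally you would need an elliptic estimate for the equation $(H-z)u=\phi$, and the constants there are not uniform as $|\Re z|\to\infty$, precisely the regime you need.

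Second, and more seriously, the Fredholm/compactness step cannot by itself give an estimate \emph{uniform} in $z$ over the whole strip $|\Im z|\le 1$. Compactness of $K(z)=WR_0(z)$ on $\dot Y^*$, together with absence of resonances and eigenvalues, yields invertibility of $I-K(z)$ with a bound that is locally uniform in $z$ (e.g.\ on $|z|\le M$), but nothing forces $\|(I-K(z))^{-1}\|$ to stay bounded as $|z|\to\infty$. Indeed, \cite{DAncona20} proves the high-frequency estimate separately (Theorem 2.1 there) by a direct Morawetz/radial-multiplier identity applied to the \emph{perturbed} equation, not by bootstrapping off the free resolvent, and then uses the Fredholm argument (Lemma 3.1 there) only to handle bounded frequencies. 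You mention the multiplier proof parenthetically in your last sentence, but in your outline the high-frequency regime is left unaddressed, and the absorption you invoke cannot close it because the "main" sum $\sum_{|j|\le J}w(2^j)^{-1}$ is $O(1)$, not small, so the $A\cdot\partial u$ contribution on the left cannot be absorbed without a smallness hypothesis on $A$.

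In short: the low-frequency half of your sketch can be completed along the lines you indicate, but you need a separate argument for $|z|$ large, and that argument cannot be a perturbative bootstrap of \eqref{eq:sharpAH}; it must work directly with the operator $H$, as in \cite{DAncona20}. One minor remark: the absence of positive embedded eigenvalues under \textbf{(H)} is obtained in that framework via Carleman/unique-continuation arguments rather than Mourre theory, though either is plausible; the more substantive issues are the two gaps above.
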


\begin{proof}
  In \cite{DAncona20} estimate \eqref{eq:resolvestR} is proved
  under the additional restriction $|\Im z|\le1$,
  which can be easily removed using the selfadjointness of $H$;
  we explain how to do this. We restrict to the upper
  half plane, since the argument for the lower half plane is 
  identical.  Let
  $z=x+iy$ with $y>1$.
  For $f,g\in C_{c}^{\infty}(\mathbb{R}^{n})$, set
  \begin{equation*}
    F(\zeta)=\langle R(\zeta)f,g\rangle.
  \end{equation*}
  Then $G(w)=F(w+i)$ is analytic and bounded in $\Im w>0$.
  Indeed, by the spectral theorem,
  \begin{equation*}
    |G(w)|\le
    \|R(w+i)\|_{L^{2}\to L^{2}}\|f\|_{L^{2}}\|g\|_{L^{2}}
    \le
    \|f\|_{L^{2}}\|g\|_{L^{2}}.
  \end{equation*}
  Hence $G$ is represented by the Poisson integral of its boundary
  values.  With
  $P_{a}(s)=\pi^{-1}a(s^{2}+a^{2})^{-1}$, $a>0$, we get
  \begin{equation*}
    |F(x+iy)|
    \le
    \int_{\mathbb{R}}P_{y-1}(x-t)|F(t+i)|\,dt .
  \end{equation*}
  Applying the strip estimate on the line $\Im z=1$ and dualizing
  the dyadic norms gives, respectively,
  \begin{equation*}
    |\langle R(t+i)f,g\rangle|
    \lesssim
    \||x|^{\frac12}f\|_{\ell^{1}L^{2}}
    \||x|^{\frac32}g\|_{\ell^{1}L^{2}},
  \end{equation*}
  \begin{equation*}
    |\langle R(t+i)f,g\rangle|
    \lesssim
    |t+i|^{-\frac12}
    \||x|^{\frac12}f\|_{\ell^{1}L^{2}}
    \||x|^{\frac12}g\|_{\ell^{1}L^{2}}.
  \end{equation*}
  The elementary estimate
  \begin{equation*}
    \int_{\mathbb{R}}P_{y-1}(x-t)|t+i|^{-\frac12}\,dt
    \lesssim |x+iy|^{-\frac12},
    \qquad y>1,
  \end{equation*}
  follows by splitting the integral into
  $|t-x|\le y$ and $|t-x|>y$ after the elementary scaling
  $t=x+ys$.  Thus the first two terms in \eqref{eq:resolvestR} follow
  for $\Im z>1$.

  For the derivative term we test against
  $g=(g_{1},\ldots,g_{n})\in C_{c}^{\infty}(\mathbb{R}^{n};\mathbb{C}^{n})$
  and write
  \begin{equation*}
    \langle \partial R(\zeta)f,g\rangle
    =
    -\langle R(\zeta)f,\partial \cdot g\rangle .
  \end{equation*}
  Thus $\langle \partial R(\zeta)f,g\rangle$ is again a scalar
  resolvent matrix element.  The spectral theorem gives the
  required boundedness in the shifted half-plane, while the strip
  estimate gives
  \begin{equation*}
    |\langle \partial R(t+i)f,g\rangle|
    \lesssim
    \||x|^{\frac12}f\|_{\ell^{1}L^{2}}
    \||x|^{\frac12}g\|_{\ell^{1}L^{2}}.
  \end{equation*}
  The Poisson formula therefore gives the same bound at
  $z=x+iy$, $y>1$. The desired norm estimates follow
  by density and duality for the dyadic norms.
\end{proof}

\subsection{Proof of Theorem \ref{the:2}} \label{sub:the_2}

First of all, we notice that estimate \eqref{eq:UREH} follows
from \eqref{eq:UREweak} by duality and interpolation.
Thus it is sufficient to consider indices of the form
\begin{equation*}
  \textstyle
  (\frac 1p,\frac 1q)=(\frac{1}{p_{B}},\frac 1q)\in AB.
\end{equation*}
For convenience, we rewrite the operator $H$ as
\begin{equation}\label{eq:rewrH}
  H=-\Delta+ a\cdot \partial+b,
  \qquad
  a=2iA,\qquad
  b=i(\partial \cdot A)+|A|^{2}+V.
\end{equation}
Then the resolvent $R(z)$  can be decomposed as follows:
\begin{equation}\label{eq:decomp}
  R=R_{0}-R_{0}(a \cdot \partial+b)R_{0}
  +R_{0}(a \cdot \partial+b)R(a \cdot \partial+b)R_{0}.
\end{equation}
By expanding $H+\Delta=a \cdot \partial+b$ we obtain several
terms
\begin{equation}\label{eq:decI}
  R=R_{0}-I_{1}-I_{2}+
  II_{1}+II_{2}+II_{3}+II_{4}
\end{equation}
where
\begin{equation*}
  I_{1}=R_{0}a \cdot \partial R_{0},
  \qquad
  I_{2}=R_{0}bR_{0}
\end{equation*}
\begin{equation*}
  II_{1}=R_{0}a \cdot \partial R a \cdot \partial R_{0},
  \qquad
  II_{2}=R_{0}b R b R_{0}
\end{equation*}
\begin{equation*}
  II_{3}=R_{0}a \cdot \partial R b R_{0},
  \qquad
  II_{4}=R_{0}bRa \cdot \partial R_{0}.
\end{equation*}

To estimate the first term $I_{1}$, we split it as follows:
\begin{equation*}
  I_{1}=
  R_{0}a \cdot \partial R_{0}=
  R_{0}a \cdot \partial R_{1}+
  R_{0}a \cdot \partial R_{2}.
\end{equation*}
By the dual of estimate \eqref{eq:weiR0} we have
\begin{equation*}
  \|R_{0}a \cdot \partial R_{1}\phi\|_{L^{q,\infty}}
  \lesssim
  |z|^{\frac 14+\theta(q',2)}
  \||x|^{1/2}a \partial R_{1}\phi\|_{\ell^{1}L^{2}}
\end{equation*}
for all $q\in[q_{B},q_{A}]$ and actually in the larger
range $q\in[q_{B},q_{D}]$.
Then by H\"{o}lder
\begin{equation*}
  \textstyle
  \lesssim
  |z|^{\frac 14+\theta(q',2)}
  \||x|^{1/2}a\|_{\ell^{q_{B}'}L^{r,2}}
  \|\partial R_{1}\phi\|_{L^{q_{B},\infty}},
  \qquad
  \frac 1r=\frac{2}{n+1}-\frac{1}{2n}
\end{equation*}
and using \eqref{eq:estR1aweak} we obtain
\begin{equation*}
  \textstyle
  \|R_{0}a \cdot\partial R_{1}\phi\|_{L^{q,\infty}}
  \lesssim
  |z|^{\mu_{1}}
  \||x|^{1/2}a\|_{\ell^{q_{B}'}L^{r,2}}
  \|\phi\|_{L^{p_{B},1}},
  \qquad
  \mu_{1}=\frac n2(\frac 12-\frac 1q)-\frac 14-\frac{1}{n+1}.
\end{equation*}

Under the alternative assumption \eqref{eq:assA2FS} we estimate
the same term by using the endpoint Frank--Simon lemma.  Put
$\sigma_{n}=(n-1)/(2n)$ and $u=\partial R_{1}(z)\phi$.
We first record the dyadic estimate
\begin{equation}\label{eq:FSaR1}
  \||x|^{1/2}a\,u\|_{\ell^{1}L^{2}}
  \lesssim
  (1+|z|^{\sigma_{n}/2})\|\phi\|_{L^{p_{B},1}}.
\end{equation}
By the localized estimate \eqref{eq:FSendpointloc} in
Lemma \ref{lem:FSendpointloc},
\begin{equation*}
  \|\one{C_{j}}u\|_{L^{p_{B}',\infty}_{r}L^{2}_{\omega}}
  \lesssim
  (1+|z|^{1/2}2^{j})^{\sigma_{n}}\|\phi\|_{L^{p_{B},1}}.
\end{equation*}
Since $p_{B}'=2n/(n-1)$, on the radial interval corresponding to
$C_{j}$ we have
\begin{equation*}
  \|\one{C_{j}}u\|_{L^{2}}
  \lesssim
  2^{j/2}
  \|\one{C_{j}}u\|_{L^{p_{B}',\infty}_{r}L^{2}_{\omega}}.
\end{equation*}
Moreover \eqref{eq:assA2FS} gives, for $j\ge0$,
\begin{equation*}
  \||x|^{1/2}a\|_{L^{\infty}(C_{j})}
  \lesssim 2^{-j(1/2+\sigma_{n}+\varepsilon)},
\end{equation*}
while \textbf{(H)} gives, for $j<0$,
\begin{equation*}
  \||x|^{1/2}a\|_{L^{\infty}(C_{j})}
  \lesssim 2^{-j/2}\bra{j}^{-\mu}.
\end{equation*}
Combining these estimates,
\begin{equation*}
  \sum_{j\ge0}
  \|\one{C_{j}}|x|^{1/2}a u\|_{L^{2}}
  \lesssim
  \sum_{j\ge0}2^{-j(\sigma_{n}+\varepsilon)}
  (1+|z|^{1/2}2^{j})^{\sigma_{n}}
  \|\phi\|_{L^{p_{B},1}}
  \lesssim
  (1+|z|^{\sigma_{n}/2})\|\phi\|_{L^{p_{B},1}},
\end{equation*}
where we used $(1+t)^{\sigma_{n}}\lesssim1+t^{\sigma_{n}}$.
Similarly, since $\mu>1$,
\begin{equation*}
  \sum_{j<0}
  \|\one{C_{j}}|x|^{1/2}a u\|_{L^{2}}
  \lesssim
  \sum_{j<0}\bra{j}^{-\mu}(1+|z|^{1/2}2^{j})^{\sigma_{n}}
  \|\phi\|_{L^{p_{B},1}}
  \lesssim
  (1+|z|^{\sigma_{n}/2})\|\phi\|_{L^{p_{B},1}}.
\end{equation*}
This proves \eqref{eq:FSaR1}.
Hence the first piece of $I_{1}$ satisfies, under
\eqref{eq:assA2FS},
\begin{equation*}
  \|R_{0}a \cdot\partial R_{1}\phi\|_{L^{q,\infty}}
  \lesssim
  |z|^{\mu_{2}}(1+|z|^{\sigma_{n}/2})
  \|\phi\|_{L^{p_{B},1}},
  \qquad
  \mu_{2}=\frac n2(\frac 12-\frac 1q)-\frac 34.
\end{equation*}
For the second piece of $I_{1}$,
by \eqref{eq:R0BA} and H\"{o}lder we get
\begin{equation*}
  \|R_{0}a \cdot \partial R_{2}\phi\|_{L^{q,\infty}}
  \lesssim
  |z|^{\mu_{2}}
  \|a \cdot \partial R_{2}\phi\|_{L^{p_{B},1}}
  \lesssim
  |z|^{\mu_{2}}
  \||x|^{1/2}a\|_{\ell^{p_{B}}L^{2n,2}}
  \||x|^{-1/2}\partial R_{2}\phi\|_{\ell^{\infty}L^{2}}
\end{equation*}
with $\mu_{2}=\frac n2(\frac 12-\frac 1q)-\frac 34$,
and by \eqref{eq:estR2gut} we get
\begin{equation*}
  \textstyle
  \|R_{0}a \cdot \partial R_{2}\phi\|_{L^{q,\infty}}
  \lesssim
  |z|^{\mu_{2}}
  \||x|^{1/2}a\|_{\ell^{p_{B}}L^{2n,2}}
  \|\phi\|_{L^{p_{B}}},
  \qquad
  \mu_{2}=\frac n2(\frac 12-\frac 1q)-\frac 34=\theta(p_{B},q).
\end{equation*}
Under \eqref{eq:assA2}, summing up we get for $I_{1}$
\begin{equation}\label{eq:estI1}
  \textstyle
  \|I_{1}\phi\|_{L^{q,\infty}}
  \lesssim
  (|z|^{\mu_{1}}+|z|^{\mu_{2}})
  \left[
    \||x|^{1/2}a\|_{\ell^{q_{B}'}L^{r,2}}+
    \||x|^{1/2}a\|_{\ell^{p_{B}}L^{2n,2}}
  \right]
  \|\phi\|_{L^{p_{B},1}},
  \quad
  \frac 1r=\frac{2}{n+1}-\frac{1}{2n}.
\end{equation}
Term $I_{2}$: by \eqref{eq:R0BA} and H\"{o}lder we get
(recall $\mu_{2}=\theta(p_{B},q)$)
\begin{equation}\label{eq:estI2}
  \|I_{2}\phi\|_{L^{q,\infty}}\lesssim
  |z|^{\mu_{2}}
  \|bR_{0}\phi\|_{L^{p_{B},1}}\lesssim
  |z|^{\mu_{2}}
  \|b\|_{L^{\frac n2,1}}\|R_{0}\phi\|_{L^{q_{A},\infty}}\lesssim
  |z|^{\mu_{2}}
  \|b\|_{L^{\frac n2,1}}\|\phi\|_{L^{p_{B},1}}.
\end{equation}
Consider now the term $II_{1}$, at first for $|z|=1$.
Splitting $R_{0}=R_{1}+R_{2}$, we estimate as for the term $I_{1}$
\begin{equation*}
  \|R_{0}a \cdot \partial R a\cdot \partial R_{1}\phi\|_{L^{q,\infty}}
  \lesssim
  \|a \cdot \partial R a\cdot \partial R_{1}\phi\|_{L^{p_{B},1}}
  \lesssim
  \||x|^{1/2}a\|_{\ell^{p_{B}}L^{2n,2}}
  \||x|^{-1/2}\partial R a\cdot \partial R_{1}\phi\|
     _{\ell^{\infty}L^{2}}
\end{equation*}
and by the resolvent estimate \eqref{eq:resolvestR}
\begin{equation*}
  \lesssim
  \||x|^{1/2}a\|_{\ell^{p_{B}}L^{2n,2}}
  \||x|^{1/2}a \cdot \partial R_{1}\phi\|_{\ell^{1}L^{2}}.
\end{equation*}
We estimate the last term exactly as above:
\begin{equation*}
  \textstyle
  \||x|^{1/2}a \cdot \partial R_{1}\phi\|_{\ell^{1}L^{2}}
  \lesssim
  \||x|^{1/2}a\|_{\ell^{q_{B}'}L^{r,2}}
  \|\partial R_{1}\phi\|_{L^{q_{B},\infty}},
  \qquad
  \frac 1r=\frac{2}{n+1}-\frac{1}{2n}
\end{equation*}
and finally using \eqref{eq:estR1aweak}
\begin{equation*}
  \|R_{0}a \cdot \partial R a\cdot \partial R_{1}\phi\|_{L^{q,\infty}}
  \lesssim
  \||x|^{1/2}a\|_{\ell^{p_{B}}L^{2n,2}}
  \||x|^{1/2}a\|_{\ell^{q_{B}'}L^{r,2}}
  \|\phi\|_{L^{p_{B},1}}.
\end{equation*}
For the second piece of $II_{1}$ we have as above
\begin{equation*}
  \|R_{0}a \cdot \partial R a\cdot \partial R_{2}\phi\|_{L^{q,\infty}}
  \lesssim
  \||x|^{1/2}a\|_{\ell^{p_{B}}L^{2n,2}}
  \||x|^{1/2}a \cdot \partial R_{2}\phi\|_{\ell^{1}L^{2}}
\end{equation*}
and by H\"{o}lder
\begin{equation*}
  \lesssim
  \||x|^{1/2}a\|_{\ell^{p_{B}}L^{2n,2}}
  \||x|^{1/2}a\|_{\ell^{p_{B}}L^{2n}}
  \|\partial R_{2}\phi\|_{L^{\frac{2n}{n-1}}}.
\end{equation*}
Using \eqref{eq:estR2sob} we conclude
\begin{equation}\label{eq:estII12}
  \lesssim
  \||x|^{1/2}a\|_{\ell^{p_{B}}L^{2n,2}}
  \||x|^{1/2}a\|_{\ell^{p_{B}}L^{2n}}
  \|\phi\|_{L^{p_{B}}}.
\end{equation}
Summing up, writing $\frac 1r=\frac{2}{n+1}-\frac{1}{2n}$,
term $II_{1}$ can be estimated as
\begin{equation}\label{eq:estII1}
  \|II_{1}\phi\|_{L^{q,\infty}}\lesssim
    \||x|^{1/2}a\|_{\ell^{p_{B}}L^{2n,2}}
  \left[
    \||x|^{1/2}a\|_{\ell^{q_{B}'}L^{r,2}}
    +
    \||x|^{1/2}a\|_{\ell^{p_{B}}L^{2n}}
  \right]
  \|\phi\|_{L^{p_{B},1}}.
\end{equation}
If we consider instead arbitrary frequencies $z$, keeping track
of the powers as for the term $I_{1}$, we obtain an additional
factor $|z|^{\mu_{1}}+|z|^{\mu_{2}}$ as in \eqref{eq:estI1}.

We estimate $II_{2}$ for $|z|=1$:
using \eqref{eq:R0BA} and H\"{o}lder
\begin{equation*}
  \|R_{0}b RbR_{0}\phi\|_{L^{q,\infty}}
  \lesssim
  \|b RbR_{0}\phi\|_{L^{p_{B},1}}
  \lesssim
  \||x|^{3/2}b\|_{\ell^{p_{B}}L^{2n,2}}
  \||x|^{-3/2}RbR_{0}\phi\|_{\ell^{\infty}L^{2}}
\end{equation*}
and using \eqref{eq:resolvestR}
\begin{equation*}
  \lesssim
  \||x|^{3/2}b\|_{\ell^{p_{B}}L^{2n,2}}
  \||x|^{1/2}bR_{0}\phi\|_{\ell^{1}L^{2}}
  \lesssim
  \||x|^{3/2}b\|_{\ell^{p_{B}}L^{2n,2}}
  \||x|^{1/2}b\|_{\ell^{q_{A}'}L^{2n/3,2}}
  \|R_{0}\phi\|_{L^{q_{A},\infty}}
\end{equation*}
and by \eqref{eq:R0BA} we conclude
\begin{equation}\label{eq:estII2}
  \|II_{2}\phi\|_{L^{q,\infty}}
  \lesssim
  \||x|^{3/2}b\|_{\ell^{p_{B}}L^{2n,2}}
  \||x|^{1/2}b\|_{\ell^{q_{A}'}L^{2n/3,2}}
  \|\phi\|_{L^{p_{B},1}}.
\end{equation}
If we consider $|z|\neq1$, keeping track of the powers of $|z|$
we get a factor $|z|^{\mu_{2}}=|z|^{\theta(p_{B},q)}$.

For $II_{3}$, when $|z|=1$, we may write as for $II_{1}$
\begin{equation*}
  \|R_{0}a \cdot \partial R b R_{0}\|_{L^{q,\infty}}
  \lesssim
  \||x|^{1/2}a\|_{\ell^{p_{B}}L^{2n,2}}
  \||x|^{1/2} b R_{0}\phi\| _{\ell^{1}L^{2}}
\end{equation*}
and then computing as for $II_{2}$
\begin{equation*}
  \lesssim
  \||x|^{1/2}a\|_{\ell^{p_{B}}L^{2n,2}}
  \||x|^{1/2}b\|_{\ell^{q_{A}'}L^{2n/3,2}}
  \|\phi\|_{L^{p_{B},1}}
\end{equation*}
and summing up
\begin{equation}\label{eq:estII3}
  \|II_{3}\phi\|_{L^{q,\infty}}\lesssim
  \||x|^{1/2}a\|_{\ell^{p_{B}}L^{2n,2}}
  \||x|^{1/2}b\|_{\ell^{q_{A}'}L^{2n/3,2}}
  \|\phi\|_{L^{p_{B},1}}.
\end{equation}
For $|z|\neq1$ we obtain a factor $|z|^{\mu_{2}}$.

Finally, for $II_{4}$ with $|z|=1$ we write like for $II_{2}$
\begin{equation*}
  \|R_{0}bRa \cdot \partial R_{0}\|_{L^{q,\infty}}\lesssim
  % \|bRa \cdot \partial R_{0}\|_{L^{p_{B},1}}\lesssim
  \||x|^{3/2}b\|_{\ell^{p_{B}}L^{2n,2}}
  \||x|^{-3/2}Ra\cdot\partial R_{0}\phi\|_{\ell^{\infty}L^{2}}
\end{equation*}
and by \eqref{eq:resolvestR}
\begin{equation*}
  \lesssim
  \||x|^{3/2}b\|_{\ell^{p_{B}}L^{2n,2}}
  \||x|^{1/2}a \cdot \partial R_{0}\phi\|_{\ell^{1}L^{2}}
\end{equation*}
and proceeding as for the term $II_{1}$ we get
\begin{equation*}
  \lesssim
  \||x|^{3/2}b\|_{\ell^{p_{B}}L^{2n,2}}
  \left[
    \||x|^{1/2}a\|_{\ell^{q_{B}'}L^{r,2}}
    +
    \||x|^{1/2}a\|_{\ell^{p_{B}}L^{2n}}
  \right]
  \|\phi\|_{L^{p_{B},1}}.
\end{equation*}
Summing up we get
\begin{equation}\label{eq:estII4}
  \|II_{4}\phi\|_{L^{q,\infty}}\lesssim
  \||x|^{3/2}b\|_{\ell^{p_{B}}L^{2n,2}}
  \left[
    \||x|^{1/2}a\|_{\ell^{q_{B}'}L^{r,2}}
    +
    \||x|^{1/2}a\|_{\ell^{p_{B}}L^{2n}}
  \right]
  \|\phi\|_{L^{p_{B},1}}.
\end{equation}
When $|z|\neq1$ we must include an additional factor
$|z|^{\mu_{1}}+|z|^{\mu_{2}}$.

Collecting the previous estimates we obtain
\begin{equation}\label{eq:finalest}
  \|R(z)\phi\|_{L^{q,\infty}}\lesssim
  C(a,b)(|z|^{\mu_{1}}+|z|^{\mu_{2}})\|\phi\|_{L^{p_{B},1}}
\end{equation}
where
\begin{equation*}
  C(a,b)=\alpha_{1}+\alpha_{2}+\beta_{1}+
  (\alpha_{1}+\alpha_{2}+\beta_{2})(\alpha_{2}+\beta_{3})
\end{equation*}
\begin{equation}\label{eq:can}
  \textstyle
  \alpha_{1}=\||x|^{1/2}a\|_{\ell^{q_{B}'}L^{r,2}},
  \quad
  \alpha_{2}=\||x|^{1/2}a\|_{\ell^{p_{B}}L^{2n,2}},
  \qquad
  \frac 1r=\frac{2}{n+1}-\frac{1}{2n},
\end{equation}
\begin{equation}\label{eq:cbn}
  \beta_{1}=\|b\|_{L^{n/2,1}},
  \quad
  \beta_{2}=
  \||x|^{1/2}b\|_{\ell^{q_{A}'}L^{2n/3,2}},
  \quad
  \beta_{3}=
  \||x|^{3/2}b\|_{\ell^{p_{B}}L^{2n,2}}.
\end{equation}
Note that $\alpha_{2},\beta_{1},\beta_{2}<\infty$
thanks to assumption \textbf{(H)},
while $\alpha_{1}<\infty$ by \textbf{(H)} and \eqref{eq:assA2},
which proves the estimate under \eqref{eq:assA2}.

If \eqref{eq:assA2FS} is assumed instead, we use
\eqref{eq:FSaR1} in every occurrence of
$\||x|^{1/2}a\,\partial R_{1}\phi\|_{\ell^{1}L^{2}}$.
These are precisely the $R_{1}$ pieces in the estimates of
$I_{1}$, $II_{1}$ and $II_{4}$; all terms containing $R_{2}$ or
$b$ are unchanged.  Thus \eqref{eq:finalest} is replaced by
\begin{equation}\label{eq:finalestFS}
  \|R(z)\phi\|_{L^{q,\infty}}\lesssim
  C_{FS}(a,b)|z|^{\mu_{2}}(1+|z|^{\sigma_{n}/2})
  \|\phi\|_{L^{p_{B},1}},
\end{equation}
where $C_{FS}(a,b)$ is controlled by the quantities in
\textbf{(H)} and by the norm in \eqref{eq:assA2FS}.  Since
$\mu_{2}=\theta(p_{B},q)$ and
$\sigma_{n}/2=(n-1)/(4n)$, this is the weak endpoint estimate
claimed in the Frank--Simon alternative.  The strong estimates
follow from it by the same duality and interpolation argument used
above.

\subsection{Proof of Theorem \ref{the:1}} \label{sub:the_1}

Writing as before $H=-\Delta+a \cdot \partial +b$
as in \eqref{eq:rewrH}, we use the standard decompositions
\begin{equation}\label{eq:dec2}
  R(z)=R_{0}-R_{0}(a \cdot \partial+b)R=
  R_{0}-R(a \cdot \partial +b)R_{0}.
\end{equation}
We shall first prove that $R(z)$ satisfies an estimate similar to
\eqref{eq:weiR0} (with $q=2$).
Recall that $\theta(p,2)=\frac n2(\frac 1p-\frac12)-1$.

\begin{lemma}[]\label{lem:gutR}
  Under Assumption \textbf{(H)}, the resolvent $R(z)=(H-z)^{-1}$
  satisfies for all $n\ge3$, $p\in(1,\infty)$ and
  $z\in\mathbb{C}\setminus[0,+\infty)$
  \begin{equation}\label{eq:gutR}
    \textstyle
    \||x|^{-\frac 12}R(z)\phi\| _{\ell^{\infty}L^{2}}
    \lesssim
    |z|^{\frac 14+\theta(p,2)}
    \|\phi\|_{L^{p}}
    \qquad\text{provided}\qquad 
    \frac 12+\frac{1}{n+1}\le \frac 1p\le \frac 12+\frac{3}{2n}.
  \end{equation}
\end{lemma}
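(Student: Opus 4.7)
My plan is to derive the estimate from the free-resolvent bound \eqref{eq:weiR0} by a perturbative expansion of $R$ in $V=a\cdot\partial+b$, controlled by the Agmon--H\"ormander estimate \eqref{eq:resolvestR}. Writing $R=-\Delta+V$ as in \eqref{eq:rewrH} and applying the resolvent identity $R=R_0-R_0VR$,
$$\||x|^{-1/2}R\phi\|_{\ell^\infty L^2} \le \||x|^{-1/2}R_0\phi\|_{\ell^\infty L^2} + \||x|^{-1/2}R_0VR\phi\|_{\ell^\infty L^2};$$
the first summand is exactly $|z|^{1/4+\theta(p,2)}\|\phi\|_{L^p}$ by \eqref{eq:weiR0}, and the second is the perturbative object.

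For the perturbation term I would apply the dual Agmon--H\"ormander inequality to the outermost $R_0$ (picking up a factor $|z|^{-1/2}$), then split $V=a\cdot\partial+b$ and use dyadic H\"older against the coefficients (finite $\||x|a\|_{\ell^1 L^\infty}$, $\||x|b\|_{\ell^1 L^\infty}$ by \textbf{(H)}), and finally \eqref{eq:resolvestR} on the remaining $R$-factor. For the $a\cdot\partial$ piece, the weighted derivative bound from Proposition \ref{pro:gutierr} and Proposition \ref{pro:estR2}---$\||x|^{-1/2}\partial R_0\phi\|_{\ell^\infty L^2}\lesssim|z|^{3/4+\theta(p,2)}\|\phi\|_{L^p}$---supplies an extra $|z|^{1/2}$ that compensates the $|z|^{-1/2}$ loss, yielding the net exponent $1/4+\theta(p,2)$. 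For the pure $b$ piece there is no derivative available to balance, so a naive first-order expansion falls short by $|z|^{-1/2}$.

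Closing this gap is the sole real difficulty. My preferred resolution is to iterate the resolvent identity once more, capturing the dangerous double-$b$ interaction inside the purely free composition $R_0bR_0bR_0$; this composition maps $L^p\to L^p$ uniformly in $z$, because the intermediate $R_0$ acts between $L^p$ and $L^{p^*}$ with $1/p^*=1/p-2/n$ along the Sobolev line $\theta=0$ (Hardy--Littlewood--Sobolev), while $b\in L^{n/2}$ by \textbf{(H)}. Applying \eqref{eq:weiR0} to the outermost $R_0$ then gives the right power. Alternatively, one may argue by Fredholm theory: $R_0V$ is compact on the weighted space defined by $\||x|^{-1/2}\cdot\|_{\ell^\infty L^2}$ thanks to Lemma \ref{lem:berndyad} and the decay of $a,b$, and the non-resonance hypothesis in \textbf{(H)} ensures $I+R_0V$ has trivial kernel; uniform invertibility for $|\Im z|\le 1$ follows by continuity, and the claim is obtained from $R=(I+R_0V)^{-1}R_0$ combined with \eqref{eq:weiR0}.
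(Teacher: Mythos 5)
Your approach does not close, and the gap is not where you think it is. The paper does \emph{not} estimate the outer $R_0$ by the Agmon--H\"ormander inequality (which indeed loses $|z|^{-1/2}$); instead it proves the \emph{dual} estimate $\|R(z)\phi\|_{L^{p'}}\lesssim |z|^{\frac14+\theta(p,2)}\||x|^{1/2}\phi\|_{\ell^1 L^2}$ using the decomposition $R=R_0-R_0(a\cdot\partial+b)R$, applying \eqref{eq:dualR0} (the dual of \eqref{eq:weiR0}, an $\ell^1(|x|^{1/2})L^2\to L^{p'}$ bound) to the outer $R_0$ and then \eqref{eq:resolvestR} to the inner $R$. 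Once you dualize, the $b$-piece is estimated by $\||x|^2 b\|_{\ell^1 L^\infty}\||x|^{-3/2}R\phi\|_{\ell^\infty L^2}$, and the \emph{first} term of \eqref{eq:resolvestR}, namely $\||x|^{-3/2}R\phi\|_{\ell^\infty L^2}\lesssim\||x|^{1/2}\phi\|_{\ell^1 L^2}$, carries \emph{no} power of $|z|$ at all. Your ``shortfall by $|z|^{-1/2}$'' comes from instead invoking the middle term $|z|^{1/2}\||x|^{-1/2}R\phi\|_{\ell^\infty L^2}\lesssim\||x|^{1/2}\phi\|_{\ell^1 L^2}$ with the weaker weight $|x|^{-1/2}$; the assumption $|x|^2b\in\ell^1L^\infty$ in \textbf{(H)} is exactly what licenses the stronger weight $|x|^{-3/2}$, and then there is nothing to compensate. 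Consequently the Born iteration and the Fredholm alternative you propose are both unnecessary (and the latter would essentially re-derive Lemma~\ref{lem:resolvestR}, which is already assumed here).

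A second independent problem is the intermediate estimate you claim, $\||x|^{-1/2}\partial R_0\phi\|_{\ell^{\infty}L^2}\lesssim|z|^{3/4+\theta(p,2)}\|\phi\|_{L^p}$ on the full range $\frac12+\frac1{n+1}\le\frac1p\le\frac12+\frac3{2n}$: it is not available. Proposition~\ref{pro:gutierr} gives it for $R_1$ when $\frac1p\ge\frac12+\frac1{n+1}$, but Proposition~\ref{pro:estR2} with $|\alpha|=1$, $\nu=1/2$ only covers $\frac1p\le\frac12+\frac1{2n}$, and since $\frac1{n+1}>\frac1{2n}$ for $n\ge2$ these ranges are disjoint. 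The paper even remarks, right after \eqref{eq:weiR0}, that no homogeneous-weight estimate for $\partial^\alpha R_0$ with $|\alpha|\ge1$ is obtained. (This obstruction disappears in the dualized argument, where the derivative lands on the inner $R$ and is absorbed by \eqref{eq:resolvestR}.) Finally, a small but telling slip: your first-order expansion writes $R=R_0-R_0VR$, yet the subsequent discussion of the $a\cdot\partial$ piece treats $\partial R_0$ as the inner factor, i.e.\ tacitly switches to $R=R_0-RVR_0$; fixing the decomposition does not by itself cure the two gaps above.
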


\begin{proof}[Proof of the Lemma]
  We use the dual estimate of \eqref{eq:weiR0}, that is
  \begin{equation}\label{eq:dualR0}
    \|R_{0}(z)\phi\|_{L^{p'}}\lesssim
    |z|^{\frac 14+\theta(p,2)}
    \||x|^{\frac 12}\phi\| _{\ell^{1}L^{2}}
  \end{equation}
  with $p$ as in \eqref{eq:gutR}, and we shall prove that
  the dual estimate of \eqref{eq:gutR} is valid. Clearly,
  it is sufficient to estimate the second term in \eqref{eq:dec2}.
  Applying \eqref{eq:dualR0} we get
  \begin{equation*}
    |z|^{-\frac 14-\theta(p,2)}
    \|R_{0}(z)(a \cdot \partial+b)R\phi\|
      _{L^{p'}}\lesssim
    \||x|^{\frac 12}(a \cdot \partial+b)R\phi\|_{\ell^{1}L^{2}}
  \end{equation*}
  and by H\"{o}lder
  \begin{equation*}
    \le
    \||x|a\|_{\ell^{1}L^{\infty}}
      \||x|^{-\frac 12}\partial R \phi\|_{\ell^{\infty}L^{2}}
    +\||x|^{2}b\|_{\ell^{1}L^{\infty}}
      \||x|^{-\frac 32}R \phi\|_{\ell^{\infty}L^{2}}.
  \end{equation*}
  Both norms of $a$ and $b$ are finite thanks to \textbf{(H)}.
  Recalling the resolvent estimate \eqref{eq:resolvestR},
  we conclude that the right hand side 
  is bounded by $C\||x|^{\frac 12}\phi\|_{\ell^{1}L^{2}}$
  as claimed.
\end{proof}

In order to prove Theorem \ref{the:1} we shall use the second
decomposition in \eqref{eq:dec2}.
Since $R_{0}$ satisfies \eqref{eq:URER0}
for the closed region $DEE'=\Delta_{1}(n)\subset \Delta(n)$,
we focus on the piece $R(a \cdot \partial +b)R_{0}$;
by duality and interpolation, it is sufficient to prove
the estimate on the closed segment $DE$.
Taking $q\in[q_{D},q_{E}]$ and using the dual of 
\eqref{eq:gutR} we have
\begin{equation*}
  \textstyle
  \|Ra \cdot \partial R_{0}\phi\|_{L^{q}}\lesssim
  |z|^{\mu_{2}}
  \||x|^{\frac 12}a \cdot \partial R_{0}\phi\|_{\ell^{1}L^{2}},
  \qquad
  \mu_{2}=\frac n2(\frac 12-\frac 1q)-\frac 34.
\end{equation*}
Splitting $R_{0}=R_{1}+R_{2}$, we have by H\"{o}lder
and \eqref{eq:estR1gut} with $|\alpha|=1$
\begin{equation*}
  \||x|^{\frac 12}a \cdot \partial R_{1}\phi\|_{\ell^{1}L^{2}}
  \le
  \||x|a\|_{\ell^{1}L^{\infty}}
  \||x|^{-\frac 12}\partial R_{1}\phi\|_{\ell^{\infty}L^{2}}
  \lesssim
  |z|^{\frac{n}{2}(\frac1{p_E}-\frac12)-1+\frac12+\frac14}\|\phi\|_{L^{p_{E}}}
\end{equation*}
while by H\"{o}lder and by \eqref{eq:estR2gut} with $|\alpha|=1$,
$\nu=\frac{1}{n+1}$
\begin{equation*}
  \||x|^{\frac 12}a \cdot \partial R_{2}\phi\|_{\ell^{1}L^{2}}
  \le
  \||x|^{\frac 12+\nu}a\|_{\ell^{1}L^{\infty}}
  \||x|^{-\nu}\partial R_{2}\phi\|_{\ell^{\infty}L^{2}}
  \lesssim |z|^{\frac12+\frac{\nu}{2}+\theta(p_E,2)}
  \|\phi\|_{L^{p_{E}}}
\end{equation*}
Summing up we have on the line $DE$
\begin{equation}\label{eq:firstG}
  \|Ra \cdot \partial R_{0}\phi\|_{L^{q}}\lesssim
  |z|^{\frac n2(\frac 1{p_E}-\frac 1q)-1}
  (1+|z|^{-\frac 14+\frac{1}{2(n+1)}})
  \|\phi\|_{L^{p_{E}}}.
\end{equation}
Consider next the term $RbR_{0}$, which we write in the form
\begin{equation*}
  RbR_{0}=R_{0}bR_{0}+R_{0}bRbR_{0}+ R_{0}a \cdot\partial R bR_{0}.
\end{equation*}
We have already estimated the terms
$I_{2}=R_{0}bR_{0}$ and $II_{2}=R_{0}bRbR_{0}$ in the proof of 
Theorem \ref{the:2}, see \eqref{eq:estI2} and 
\eqref{eq:estII2}; thus we know that $I_{2},II_{2}$ are bounded
on the entire quadrilateral $ABB'A'$ with a power of $|z|$
equal to $|z|^{\theta(p,q)}$. Thus in particular on
the triangle $DEE'$ we get
\begin{equation*}
  \|(R_{0}bR_{0}+R_{0}bRbR_{0})\phi\|_{L^{q}}\lesssim
  C_{0}(b)|z|^{\theta(p_{E},q)}
  \|\phi\|_{L^{p_{E}}}
\end{equation*}
where 
$C_{0}(b)=\|b\|_{L^{\frac n2,1}}+
  \||x|^{\frac 32}b\|_{\ell^{p_{B}}L^{2n,2}}
  \||x|^{\frac 12}b\|_{\ell^{q_{A}'}L^{2n/3,2}}$.
On the other hand by the dual of \eqref{eq:weiR0}
\begin{equation*}
  \textstyle
  \|R_{0}a \cdot \partial R bR_{0}\phi\|_{L^{q}}
  \lesssim
  |z|^{\theta(2,q)+\frac 14}
  \||x|^{\frac 12}a \cdot \partial R bR_{0}\phi\|_{\ell^{1}L^{2}},
  \qquad
\end{equation*}
and by H\"{o}lder and the resolvent estimate \eqref{eq:resolvestR}
\begin{equation*}
  \lesssim
  |z|^{\theta(2,q)+\frac 14}\||x|a\|_{\ell^{1}L^{\infty}}
  \||x|^{\frac 12}bR_{0}\phi\|_{\ell^{1}L^{2}}.
\end{equation*}
To estimate the last norm, we split
\begin{equation*}
  bR_{0}=b\chi^{2}(x)R_{1}+b(1-\chi^{2}(x))R_{1}+bR_{2}=
    Z_{1}+Z_{2}+Z_{3}.
\end{equation*}
where $\chi$ is a cutoff equal to 1 near $x=0$.
For the first piece $Z_{1}=b\chi^{2}R_{1}$ we may write
\begin{equation*}
  \||x|^{\frac 12}b\chi^{2}R_{1}\phi\|_{\ell^{1}L^{2}}\le 
  \||x|^{2-\delta}b\chi\|_{\ell^{1}L^{\infty}}
  \||x|^{-\frac 32+\delta}\chi R_{1}\phi\|_{\ell^{\infty} L^{2}}.
\end{equation*}
By Hardy's inequality \eqref{eq:hardy2} in the Appendix
with $\sigma_{2}=0$, $\sigma_{1}=\frac 12-\delta$
with $\delta>0$ small, we get
\begin{equation*}
  \||x|^{-\frac 32+\delta}\chi(x)R_{1}\phi\|
    _{\ell^{\infty} L^{2}}\lesssim
  \||x|^{-\frac 12+\delta}\partial(\chi(x)R_{1}\phi)\|
    _{\ell^{\infty} L^{2}}
\end{equation*}
and using \eqref{eq:estR1gut} with $|\alpha|=1$ and 0 we conclude 
\begin{equation*}
  \|Z_{1}\phi\|_{\ell^{1}L^{2}}\lesssim
  \||x|^{2-\delta}b\chi\|_{\ell^{1}L^{\infty}}
  |z|^{-\frac{1}{2(n+1)}}(|z|^{\frac 14}+|z|^{-\frac 14})
  \|\phi\|_{L^{p_{E}}}.
\end{equation*}
For $Z_{2}=b(1-\chi^{2})R_{1}$ we have simply,
with $c_{b}=\||x|b(1-\chi^{2})\|_{\ell^{1}L^{\infty}}$
\begin{equation*}
  \||x|^{\frac 12}Z_{2}\phi\|_{\ell^{1}L^{2}}
  \lesssim
  c_{b}
  \||x|^{-\frac 12}R_{1}\phi\|_{\ell^{\infty}L^{2}}\lesssim
  c_{b}
  |z|^{\frac 14-\frac{1}{2(n+1)}}\|\phi\|_{L^{p_{E}}}.
\end{equation*}
For $Z_{3}=bR_{2}$ we use \eqref{eq:estR2gut} with $\alpha=0$
and $\nu'=1+\frac{1}{n+1}$:
\begin{equation*}
  \||x|^{\frac 12}bR_{2}\|_{\ell^{1}L^{2}}\le 
  \||x|^{\frac 12+\nu'}b\|_{\ell^{1}L^{\infty}}
  \||x|^{-\nu'}R_{2}\phi\|_{\ell^{\infty}L^{2}}
  \lesssim
  \||x|^{\frac 12+\nu'}b\|_{\ell^{1}L^{\infty}}
  |z|^{\frac{\nu'}{2}+\theta(p_{E},2)}
  \|\phi\|_{L^{p_{E}}}.
\end{equation*}
Summing all the terms we get,
since $\mu_{2}-\frac{1}{2(n+1)}=\theta(p_{E},q)-\frac 14$,
\begin{equation}\label{eq:bR0}
  \|R(z)\phi\|_{L^{q}}
  \lesssim
  C(a,b)
  |z|^{\theta(p_{E},q)-\frac 14}(|z|^{\frac 14}+|z|^{-\frac 14})
  \|\phi\|_{L^{p_{E}}}
\end{equation}
where $C(a,b)$ is the sum of the quantities
\begin{equation*}
  \|(|x|+|x|^{\frac 12+\frac{1}{n+1}})a\|_{\ell^{1}L^{\infty}},
  \qquad
  \|b\|_{L^{\frac n2,1}}+
    \||x|^{\frac 32}b\|_{\ell^{p_{B}}L^{2n,2}}
    \||x|^{\frac 12}b\|_{\ell^{q_{A}'}L^{2n/3,2}}
\end{equation*}
\begin{equation*}
  \||x|^{2-\delta}b\chi\|_{\ell^{1}L^{\infty}}+
  \||x|b(1-\chi^{2})\|_{\ell^{1}L^{\infty}}+
  \||x|^{\frac 32+\frac{1}{n+1}}b\|_{\ell^{1}L^{\infty}}
\end{equation*}
which are all finite by assumption \eqref{eq:assdec}
and \eqref{eq:assdec2}.
By duality and interpolation we obtain \eqref{eq:garcia}.

\subsection{Proof of Remark \ref{rem:potV1}}\label{sec:rem}

By a simple modification of the proof of Theorem 1.2 in
\cite{DAncona20}, one can extend Lemma \ref{lem:resolvestR}
to more general operators
\begin{equation*}
  \widetilde{H}=H+V_{1}=
  (i\partial+A(x))^{2}+V(x)+V_{1}(x)
\end{equation*}
provided $\widetilde{H}$ is selfadjoint nonnegative,
nonresonant at 0, $A,V$ satisfying \eqref{eq:VAass}
while $V_{1}$ is real valued and satisfies for some $\delta>0$
\begin{equation}\label{eq:assV1}
  |V_{1}(x)|\lesssim \bra{x}^{-1-\delta}.
\end{equation}
Under these assumptions, the resolvent 
$\widetilde{R}(z)=(H+V_{1}-z)^{-1}$ satisfies 
the following estimate: for every $\epsilon_{0}>0$
there exists $C(\epsilon_{0})$ such that,
for all $|z|\ge \epsilon_{0}$, $|\Im z|\le1$,
\begin{equation}\label{eq:resolvestRtil}
  \||x|^{-\frac32}\widetilde{R}(z)\phi\|_{\ell^{\infty}L^{2}}+
  |z|^{1/2} \||x|^{-\frac 12}\widetilde{R}(z)\phi\|
    _{\ell^{\infty}L^{2}}+
  \||x|^{-\frac 12}\partial \widetilde{R}(z)\phi\|
    _{\ell^{\infty}L^{2}}\le C(\epsilon_{0})
  \||x|^{\frac 12}\phi\|_{\ell^{1}L^{2}}
\end{equation}

To prove \eqref{eq:resolvestRtil}, 
we first note that Theorem 2.1 of 
\cite{DAncona20} (the estimate for large frequencies)
does not require any modification since the allowed potential
may already decay like $|x|^{-1-\delta}$ for large $|x|$.
Concerning the low frequency estimate,
it is sufficient to modify the argument of
Lemma 3.1 in \cite{DAncona20} as follows: using the notations
of that paper, the operator
$K(z)=(W+iA \cdot \partial+i \partial \cdot A)R_{0}(z)$
must be replaced by $\widetilde{K}(z)=K(z)+V_{1}R_{0}(z)$.
The new operator $\widetilde{K}(z)$ is also compact from
$\dot Y^{*}$ to $\dot Y^{*}$, where $\dot Y^{*}$ is the space
with norm $\||x|^{\frac 12}\phi\|_{\ell^{1}L^{2}}$,
thanks to the estimate
\begin{equation*}
  \|V_{1}R_{0}(z)v\|_{\dot Y^{*}}\le
  \||x|V_{1}\|_{\ell^{1}L^{\infty}}\|R_{0}(z)v\|_{\dot Y}\lesssim
  \||x|V_{1}\|_{\ell^{1}L^{\infty}} \cdot|z|^{-1/2}
  \|v\|_{\dot Y^{*}}.
\end{equation*}
Clearly, this estimate can be applied only for $z\neq0$.
Then the proof of Lemmas 3.1--3.3 in \cite{DAncona20}
follows with minimal modifications,
and we obtain \eqref{eq:resolvestR} with a constant uniform
on $|z|\ge \epsilon_{0}$ for every $\epsilon_{0}>0$.
Using \eqref{eq:resolvestRtil} instead of \eqref{eq:resolvestR}
in the proof of Theorem \ref{the:1}, we obtain \eqref{eq:garcia2}.

\section{Restriction type estimates}\label{sec:restr}

By the Stone formula, the density of the spectral measure
for the selfadjoint operator $H$ can be expressed as
\begin{equation*}
  E'_{H}(\lambda)=
  \frac{1}{2\pi i} (R(\lambda+i0)-R(\lambda-i0))=
  \frac{1}{\pi} \Im R(\lambda+i0),
  \qquad
  \lambda\in \mathbb{R}
\end{equation*}
regarded as the limit of 
$\frac{1}{2\pi i} (R(\lambda+i\epsilon)-R(\lambda-i\epsilon))$
as $\epsilon \downarrow0$.
The operator $E'_{H}(\lambda)$ can be written in terms of the
unperturbed spectral measure $E'_{-\Delta}$ as follows.
Denote by $K(z)$ the operator
\begin{equation*}
  K(z)=WR_0(z)
  \qquad\text{where}\quad 
  W=a\cdot\partial+b
\end{equation*}
with $a(x),b(x)$ as in \eqref{eq:rewrH}.
If $I-K(z)$ is invertible, we have the 
\emph{Lippmann-Schwinger} representation of $R(z)$
\begin{equation}
   R(z)=R_0(z)(I-K(z))^{-1}.
\end{equation}
In \cite{DAncona20} the operator $I-K(z)$ is studied on the
space $\dot Y$ and its dual $\dot Y^{*}$, with norms
\begin{equation*}
  \|\phi\|_{\dot Y}=\||x|^{-\frac 12}\phi\|_{\ell^{\infty}L^{2}},
  \qquad
  \|\phi\|_{\dot Y^{*}}=\||x|^{\frac 12}\phi\|_{\ell^{1}L^{2}}.
\end{equation*}
In particular, in Theorem 3.5. of \cite{DAncona20} it is proved 
that under Assumption \textbf{(H)} the operator
$I-K(z)$ is bounded and invertible on $\dot Y^{*}$,
with $(I-K(z))^{-1}$ bounded uniformly for $z$ in bounded
subsets of $\{\Im z\ge0\}$ and of $\{\Im z\le0\}$.
This ensures that the limits $R(\lambda\pm i0)$ are well
defined as bounded operators from $\dot Y$ to $\dot Y^{*}$
(as implied by the resolvent estimate \eqref{eq:resolvestR}).

Starting from the trivial identity
\begin{equation*}
  (I+R_{0}(\overline{z})W)R(z)-R_{0}(\overline{z})=
  R(z)-R_{0}(\overline{z})(I-WR(z))
\end{equation*}
and using the Lippmann--Schwinger relations
\begin{equation*}
  R_{0}(\overline{z})=(I+R_{0}(\overline{z})W)R(\overline{z}),
  \qquad
  R(z)=R_{0}(z)(I-WR(z)),
\end{equation*}
we get
\begin{equation*}
  (I+R_{0}(\overline{z})W)(R(z)-R(\overline{z}))=
  (R_{0}(z)-R_{0}(\overline{z}))(I-WR(z))
\end{equation*}
which implies (see \cite{Mizutani20-b})
\begin{equation*}
  R(z)-R(\overline{z})=
  (I+R_{0}(\overline{z})W)^{-1}(R_{0}(z)-R_{0}(\overline{z}))
  (I-WR(z)).
\end{equation*}
Taking $z=\lambda+i \epsilon$ and letting $\epsilon \downarrow0$
we obtain the identity
\begin{equation}\label{eq:idEl}
  E'_{H}(\lambda)=
  (I+R_0(\lambda-i0)W)^{-1}E'_{-\Delta}(\lambda)
  (I-WR(\lambda+i0))
\end{equation}
which we shall use in the following.

\begin{theorem}
  Suppose $H$ satisfies the assumptions of Theorem \ref{the:2}
  and in addition
  $a|x|^{1/2}\in \ell^{\frac{2n}{n+3}}L^{\frac{2n}{3}}$.
  Then for any $\lambda>0$ we have the estimate
  \begin{equation}\label{eq:Restri}
    \textstyle
    \|E'_H(\lambda)\phi\|_{L^{p'}}
    \leq C (\lambda)\|\phi\|_{L^{p}},
    \qquad
    \frac 12+\frac{1}{n+1}\le\frac 1p\le \frac 12+\frac3{2n}
  \end{equation}
\end{theorem}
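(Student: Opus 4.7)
The plan is to use the Lippmann--Schwinger factorization, following the strategy of Mizutani \cite{Mizutani20-b} adapted to the magnetic setting. Starting from the identity \eqref{eq:idEl} and noting that the Lippmann--Schwinger equation $R=R_{0}-R_{0}WR$ gives the algebraic relation $(I+R_{0}(\lambda-i0)W)^{-1}=I-R(\lambda-i0)W$ (since $(I+R_{0}W)(I-RW)=I$), the spectral measure admits the symmetric factorization
$$E'_{H}(\lambda) = (I - R(\lambda-i0)W)\, E'_{-\Delta}(\lambda)\, (I - WR(\lambda+i0)).$$
Testing against $\psi\in L^{p}$ and using $R(\lambda-i0)=R(\lambda+i0)^{*}$, the bound \eqref{eq:Restri} reduces to (i) the Tomas--Stein restriction estimate $\|E'_{-\Delta}(\lambda)f\|_{L^{p'}}\lesssim\lambda^{\theta(p,p')}\|f\|_{L^{p}}$ in the full range $\frac{1}{p}\in[\frac{1}{2}+\frac{1}{n+1},1]$; and (ii) the $L^{p}\to L^{p}$ bound $\|\mathcal{T}(\lambda)\phi\|_{L^{p}}\lesssim C(\lambda)\|\phi\|_{L^{p}}$ for the operator $\mathcal{T}(\lambda)=I-WR(\lambda+i0)$, together with the symmetric bound on its adjoint (whose proof is identical since $W^{*}=-\overline{a}\cdot\partial+(\overline{b}-\partial\cdot\overline{a})$ has the same structural properties as $W$).

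Writing $W=a\cdot\partial+b$ as in \eqref{eq:rewrH} and splitting $\mathcal{T}\phi=\phi-a\cdot\partial R\phi-bR\phi$, the zeroth-order piece is controlled by dyadic H\"older,
$$\|bR\phi\|_{L^{p}} \lesssim \||x|^{1/2}b\|_{\ell^{p}L^{r}}\, \||x|^{-1/2}R\phi\|_{\ell^{\infty}L^{2}}, \qquad \frac{1}{r}=\frac{1}{p}-\frac{1}{2},$$
where the second factor is handled by Lemma \ref{lem:gutR} and the first is finite under Assumption \textbf{(H)} in the stated range of $p$. The first-order piece is treated analogously,
$$\|a\cdot\partial R\phi\|_{L^{p}} \lesssim \||x|^{1/2}a\|_{\ell^{p}L^{r}}\, \||x|^{-1/2}\partial R\phi\|_{\ell^{\infty}L^{2}};$$
at the upper endpoint $\frac{1}{p}=\frac{1}{2}+\frac{3}{2n}$ the exponents on $a$ become $\ell^{2n/(n+3)}L^{2n/3}$, matching exactly the additional hypothesis of the theorem, and intermediate values are reached by interpolation with the lower endpoint $\frac{1}{p}=\frac{1}{2}+\frac{1}{n+1}$ where \textbf{(H)} supplies the needed control.

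The main obstacle is to establish the analog of Lemma \ref{lem:gutR} for the derivative of the resolvent, namely
$$\||x|^{-1/2}\partial R(\lambda+i0)\phi\|_{\ell^{\infty}L^{2}} \lesssim C(\lambda)\|\phi\|_{L^{p}}$$
throughout the full range $\frac{1}{p}\in[\frac{1}{2}+\frac{1}{n+1},\frac{1}{2}+\frac{3}{2n}]$. For this we use the Lippmann--Schwinger identity $\partial R=\partial R_{0}-\partial R_{0}\cdot WR$: the free piece is handled by the Gutierrez-type weighted estimate \eqref{eq:th78gut} together with the refined bounds \eqref{eq:estR1gut} and \eqref{eq:bettgut} applied with $|\alpha|=1$, which extend the basic range by trading some weight for an extra power of $|z|$ via the $R_{0}=R_{1}+R_{2}$ decomposition of Section \ref{sec:free_case}; the perturbative piece is absorbed by iterating Lemma \ref{lem:gutR} and the resolvent estimate \eqref{eq:resolvestR}, exactly the scheme already deployed in the proof of Theorem \ref{the:1}. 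Composing the $L^{p}\to L^{p}$ bound on $\mathcal{T}(\lambda)$ with the Tomas--Stein bound on $E'_{-\Delta}(\lambda)$ and keeping track of the powers of $\lambda$ at each stage produces a continuous function $C(\lambda)>0$ for which \eqref{eq:Restri} holds.
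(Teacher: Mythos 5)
Your reduction via the algebraic identity $(I+R_0(\lambda-i0)W)^{-1}=I-R(\lambda-i0)W$ is a nice idea and is formally correct; it would, if it worked, replace the paper's Fredholm-theoretic Lemma \ref{B:I+R0W} by a direct estimate on $WR$ and its adjoint. However, the argument breaks down at the point you yourself flag as ``the main obstacle'': establishing $\||x|^{-1/2}\partial R(\lambda+i0)\phi\|_{\ell^{\infty}L^{2}}\lesssim C(\lambda)\|\phi\|_{L^{p}}$ at the endpoint $\frac1p=\frac12+\frac{3}{2n}$, i.e.\ $p=\frac{2n}{n+3}$.

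The free ingredients you invoke do not cover this endpoint when a derivative is present. Estimate \eqref{eq:estR1gut} with $|\alpha|=1$ does reach the full range, but it only controls the low-frequency piece $\partial R_1$. For $\partial R_2$ one must use \eqref{eq:estR2gut}, and with $\nu=\frac12$, $|\alpha|=1$, $q=2$, condition \eqref{eq:range1} becomes $\frac12-\frac1{2n}\le\frac1p\le\frac12+\frac1{2n}$, which excludes $\frac1p=\frac12+\frac{3}{2n}$. The same restriction appears in \eqref{eq:th78gut} and \eqref{eq:bettgut} with $|\alpha|=1$: they require $\frac1p\le\frac12+\frac1n$, again falling short. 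This is not a technical artifact: $\partial R_2\simeq\bra{D}^{-1}$ gains only one Sobolev derivative, so $\partial R_2: L^{2n/(n+3)}\to L^{2n/(n+1)}$, and $\frac{2n}{n+1}<2$ means one cannot reach $L^2$ on dyadic shells by H\"older, so the weighted $\ell^{\infty}L^{2}$ bound simply does not hold at that exponent. Your fallback of expanding $\partial R=\partial R_0-\partial R_0 WR$ and iterating \eqref{eq:resolvestR} returns the datum in the $\||x|^{1/2}\phi\|_{\ell^1 L^2}$ norm rather than $\|\phi\|_{L^{2n/(n+3)}}$, so it does not close either.

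The paper circumvents precisely this by refusing to route the $R_2$ piece through weighted norms: it writes $R=R_0(I-K(z))^{-1}$, proves (Lemma \ref{B:I+R0W}, via compactness of $WR_0$ and Fredholm theory) that $(I-K(z))^{-1}$ is bounded on the unweighted space $L^{2n/(n+3)}$, and then estimates the $R_2$ contribution by the plain H\"older--Sobolev chain $\|a\cdot\partial R^2\phi\|_{L^{2n/(n+3)}}\le\|a\|_{L^n}\|\partial R_2(I-K(z))^{-1}\phi\|_{L^{2n/(n+1)}}$ using \eqref{eq:estR2sob}. The weighted machinery (\eqref{eq:estR1gut} and Lemma \ref{lem:gutR}) is reserved for the $R_1$ and $bR$ pieces, where it does apply in the full range. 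To repair your argument you would need either an independent proof of $(I-K)^{-1}$ on $L^{2n/(n+3)}$ (which essentially is the paper's Lemma \ref{B:I+R0W}) or a different treatment of the $a\cdot\partial R_2$ term that avoids the weighted $\ell^{\infty}L^{2}$ norm.
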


\begin{proof}
  In the range
  \begin{equation*}
    \textstyle
    \frac 12+\frac{1}{n+1}\le\frac 1p\le \frac 12+\frac1n
  \end{equation*}
  estimate \eqref{eq:Restri} is a direct consequence of the
  definition 
  $E'_{H}(\lambda)=\frac 1\pi\Im R(\lambda+i0)$ and of
  \eqref{eq:UREH}. To obtain the full range \eqref{eq:Restri},
  by interpolation,
  it is then sufficient to prove the estimate for
  the endpoint value $p=\frac{2n}{n+3}$.
  Recalling the estimate for the free case \eqref{eq:restrfree}
  and the representation \eqref{eq:idEl}, we must only prove that
  $WR(\lambda+i0)$ extends to a bounded operator on
  $L^{\frac{2n}{n+3}}$ while $(I+R_{0}(\lambda-i0)W)^{-1}$
  extends to a bounded operator on $L^{\frac{2n}{n-3}}$.
  Here prove the first fact, while the second will
  be proved in the following Lemma.

  We split $WR(\lambda+i0)$ into three parts
  \begin{equation*}
     WR=a\cdot\partial R^1+a\cdot\partial R^2+bR
  \end{equation*}
  where, writing for simplicity $z=\lambda+i0$,
  \begin{equation}\label{eq:decomR}
    R^{1}=R_1(I-K(z))^{-1}, \quad
    R^{2}=R_2(I-K(z))^{-1}.
  \end{equation} 
  By Lemma 3.3 in \cite{DAncona20}, the operator 
  $(I-K(z))^{-1}$ is bounded on $Y^*$.
  Using the dual of \eqref{eq:estR1gut} we can write for
  $\frac{1}{2}+\frac{1}{n+1}\le\frac1p<1$ 
  and $\mu=\frac{|\alpha|}{2}+\frac 14+\theta(p,2)$
  \begin{align}\label{eq:estR1}
  \begin{split}
      \|\partial^\alpha R^1 \phi\|_{L^{p'}}&= 
      \|\partial R_1(I-K(z))^{-1}f\|_{L^{p'}}\\
    &\lesssim |z|^{\mu}
      \||x|^{\frac 12}(I-K(z))^{-1}\phi\|_{\ell^{1} L^2}\\
    &\lesssim |z|^{\mu}C(z)
      \||x|^{\frac 12}\phi\|_{\ell^{1} L^2}
  \end{split}
  \end{align}
  where $C(z)$ is the norm of $(I-K(z))^{-1}$ on $\dot Y^{*}$,
  while using the dual of \eqref{eq:estR2gut} with $\nu=\frac 12$
  we can write for
  $\frac{1}{2}-\frac{1}{2n}\leq 
    \frac{1}{p}\leq\frac{1}{2}+\frac{3}{2n}
    -\frac{|\alpha|}{n}$ 
  \begin{align}\label{eq:estR2}
  \begin{split}
     \|\partial^\alpha R^2\phi\|_{L^{p'}}&= 
     \|\partial R_2(I-K(z))^{-1}\phi\|_{L^{p'}}\\
    &\lesssim |z|^{\mu}
    \||x|^{\frac12}(I-K(z))^{-1}\phi\|_{\ell^{1} L^2}\\
    &\lesssim |z|^{\mu}C(z)
    \||x|^{\frac12}\phi\|_{\ell^{1} L^2}.
  \end{split}
  \end{align}
  Then by H\"older inequality and the dual of \eqref{eq:estR1} 
  we get
  \begin{equation*}
    \|a\cdot\partial R^1 \phi\|_{L^{\frac{2n}{n+3}}}\le 
    \|a|x|^{\frac 12}\|_{\ell^{\frac{2n}{n+3}}L^{\frac{2n}{3}}} 
    \||x|^{-\frac 12}\partial R^1\phi\|_{\ell^\infty L^2}
    \lesssim
    \|a|x|^{\frac 12}\|_{\ell^{\frac{2n}{n+3}}L^{\frac{2n}{3}}} 
    |z|^{\mu}\|\phi\|_{L^{\frac{2n}{n+3}}}.
  \end{equation*}
  For the term $a\cdot\partial R^2$, by Lemma \ref{B:I+R0W} 
  below we know that $(I-K(z))^{-1}$
  is a bounded operator on $L^{\frac{2n}{n+3}}$,
  with norm $C(z)$,
  therefore we can write, using \eqref{eq:estR2sob},
  \begin{equation}
    \begin{aligned}
     \|a\cdot\partial R^2 \phi\|_{L^{\frac{2n}{n+3}}}&
     \le \|a\|_{L^{n}}\|\partial R^2 \phi\|_{L^{\frac{2n}{n+1}}}
     = \|a\|_{L^{n}}
     \|\partial R_2(I-K(z))^{-1}\phi\|_{L^{\frac{2n}{n+1}}}\\
     &\lesssim  
     \|a\|_{L^{n}}\|(I-K(z))^{-1}\phi\|_{L^{\frac{2n}{n+3}}}\\
     &\le  \|a\|_{L^{n}}C(z)\|\phi\|_{L^{\frac{2n}{n+3}}}.
     \end{aligned}
  \end{equation}
  For the last term we have, using \eqref{eq:gutR},
  \begin{equation}
  \begin{aligned}
     \|bR \phi\|_{L^{\frac{2n}{n+3}}}&\le 
     \|b|x|^{1/2} \|_{\ell^{\frac{2n}{n+3}}L^{\frac{2n}{3}}}
     \||x|^{-1/2}R \phi\|_{\ell^\infty L^2}\\
     &\leq \|b|x|^{1/2} \|_{\ell^{\frac{2n}{n+3}}L^{\frac{2n}{3}}}
     \|\phi\|_{L^{\frac{2n}{n+3}}}.
  \end{aligned}
  \end{equation}
  Thus we see that $WR$ is a bounded operator on
  $L^{\frac{2n}{n+3}}$ as claimed.
  Combining this estimate with the following Lemma, the proof
  is concluded.
\end{proof}

\begin{lemma}\label{B:I+R0W}
  Let $W=a\cdot\partial +b$ such that 
  $b\in L^{\frac n2}\cap L^{\frac n2 +\epsilon}$,
  $a\in L^{n}\cap L^{n+\epsilon}$ and
  $a|x|^{1/2}\in \ell^{\frac{2n}{n+3}}L^{\frac{2n}{3}}$ for some
  $\epsilon>0$. Then the operator
  $I+R_0(z)W$ is bounded and invertible on $L^{\frac{2n}{n-3}}$
  for all $z\neq 0$, and $z \mapsto(I+R_0(z)W)^{-1}$ is a
  continuous map in the operator norm
  for $z$ in the upper (resp.~lower) complex half plane
  minus the origin.
\end{lemma}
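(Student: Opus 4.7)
The strategy is a Fredholm alternative argument, split into three steps: (i) show $R_{0}(z)W$ extends to a bounded operator on $L^{2n/(n-3)}$ for every $z\neq 0$ in the closed upper half plane (the lower one being symmetric), (ii) upgrade this to compactness by a smooth approximation of $a,b$, and (iii) prove injectivity of $I+R_{0}(z)W$, so that Fredholm theory yields a bounded inverse with continuous dependence on $z$.

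For Step (i) I split $R_{0}(z)W=R_{0}(z)a\cdot\partial+R_{0}(z)b$. The multiplication piece is routine: H\"older combined with $b\in L^{n/2}$ gives $b\phi\in L^{p_{B}}$ for $\phi\in L^{2n/(n-3)}$, and the Sobolev bound at the endpoint $A'$ of the line $AA'$, namely \eqref{eq:R0AAp}, yields $R_{0}(z)b:L^{2n/(n-3)}\to L^{2n/(n-3)}$ with constant uniform in $z$. For the gradient piece, formally
\[
  R_{0}(z)\,a\cdot\partial\phi
  =\partial\cdot R_{0}(z)(a\phi)-R_{0}(z)\bigl((\partial\cdot a)\phi\bigr);
\]
for smooth $a$ the first term is controlled by H\"older ($a\phi\in L^{2n/(n-1)}$) together with the one-derivative Sobolev bound $\partial R_{0}(z):L^{2n/(n-1)}\to L^{2n/(n-3)}$ (from Proposition \ref{pro:estR2} with $|\alpha|=1$ and the corresponding low-frequency $\partial R_{1}$ bound), while the divergence term---which is what prevents a naive extension---is handled using the weighted integrability $a|x|^{1/2}\in\ell^{2n/(n+3)}L^{2n/3}$ combined with the weighted estimates \eqref{eq:estR1gut} and \eqref{eq:estR2gut}; the resulting bound then extends to general $a\in L^{n}\cap L^{n+\epsilon}$ by density.

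For Step (ii), I would approximate $a,b$ by $C_{c}^{\infty}$ functions in their respective norms, exploiting the $\epsilon$-improvements $L^{n+\epsilon}$, $L^{n/2+\epsilon}$: each approximant $R_{0}(z)b_{k}$ and $R_{0}(z)a_{k}\cdot\partial$ then factors through a compact Sobolev embedding on the support of $a_{k},b_{k}$, hence is compact on $L^{2n/(n-3)}$, and the norm convergence from Step (i) transfers compactness to $R_{0}(z)W$.

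For Step (iii), if $f\in L^{2n/(n-3)}$ satisfies $(I+R_{0}(z)W)f=0$, then $f=-R_{0}(z)Wf$ and applying $-\Delta-z$ distributionally gives $(H-z)f=0$. For $\Im z\neq 0$, a bootstrap using the decay of $a,b$ places $f\in L^{2}$, whence selfadjointness of $H$ forces $f=0$. For $z=\lambda>0$ the identity $f=-R_{0}(\lambda\pm i0)Wf$ encodes the Sommerfeld outgoing (resp.\ incoming) radiation condition, and combined with the nonresonance of $H$ at zero and the absence of positive embedded eigenvalues---both ensured under Assumption \textbf{(H)} via the analysis of \cite{DAncona20}---this forces $f=0$. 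Fredholm alternative then delivers a bounded inverse of $I+R_{0}(z)W$ on $L^{2n/(n-3)}$, and continuity of the inverse in operator norm follows from operator-norm continuity of $z\mapsto R_{0}(z)W$ combined with continuity of inversion on the open set of invertible operators.

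I expect Step (i), specifically the bound on $R_{0}(z)\,a\cdot\partial$, to be the main obstacle: the integration-by-parts identity produces the term $R_{0}(z)(\partial\cdot a)\phi$ which cannot be controlled merely by $\|a\|_{L^{n}}$, and the weighted hypothesis on $a$ has been introduced precisely so that this term can be absorbed via the weighted resolvent estimates of Section \ref{sec:free_case}.
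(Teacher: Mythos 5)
Your plan diverges from the paper's in a way that creates two genuine gaps, both located in your Step (i). The paper does not estimate $R_{0}(z)\,a\cdot\partial$ directly; it passes to the dual and proves compactness of $WR_{0}=a\cdot\partial R_{0}+bR_{0}$ on $L^{2n/(n+3)}$, splitting $R_{0}=R_{1}+R_{2}$. Because the derivative then lands \emph{on the resolvent} and not on the potential, no integration by parts is needed, and the weighted hypothesis enters through the factorization
$a\cdot\partial R_{1}=\bigl(a|x|^{1/2}\bigr)\cdot\bigl(|x|^{-1/2}\psi(D)|x|^{1/2}\bigr)\cdot\bigl(|x|^{-1/2}\partial R_{1}\bigr)$,
where the last factor is controlled by the weighted restriction estimate \eqref{eq:estR1gut}, the middle factor is compact by Lemma \ref{lem:berndyad}, and the first is handled by the hypothesis $a|x|^{1/2}\in\ell^{2n/(n+3)}L^{2n/3}$. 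Your route instead introduces the identity $R_{0}(a\cdot\partial\phi)=\partial\cdot R_{0}(a\phi)-R_{0}\bigl((\partial\cdot a)\phi\bigr)$ and runs into two problems.

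First, the bound you invoke for the leading term, $\partial R_{0}(z):L^{2n/(n-1)}\to L^{2n/(n-3)}$ uniformly in $z$, is false. It holds for the high-frequency piece $\partial R_{2}$ by Proposition \ref{pro:estR2}, but not for $\partial R_{1}$: with $\tfrac1p=\tfrac{n-1}{2n}$, $\tfrac1q=\tfrac{n-3}{2n}$ one has $\tfrac1p-\tfrac1q=\tfrac1n<\tfrac{2}{n+1}$ for all $n\ge2$, so the pair lies strictly outside the pentagon $BB'X'ZX$ of Theorem \ref{the:estR1}, and the restriction-theoretic obstruction near the sphere $|\xi|=1$ prevents a uniform estimate there. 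There is no ``corresponding low-frequency $\partial R_{1}$ bound'' at this pair of exponents; recovering $R_{1}$ is precisely where the weighted $\ell^{\infty}L^{2}$ machinery is needed, as in the paper.

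Second, the term $R_{0}\bigl((\partial\cdot a)\phi\bigr)$ cannot be controlled by the hypotheses of the Lemma, since the stated conditions constrain only $a$ and $a|x|^{1/2}$, not $\partial\cdot a$. The weighted integrability of $a$ gives no information whatsoever about $\partial\cdot a$: an $L^{n}$ vector field can have a divergence that fails to lie in any Lebesgue space. Approximating $a$ by smooth fields does not rescue this, because the bound produced by your integration by parts depends on $\|\partial\cdot a_{k}\|$, which need not converge. Working with $WR_{0}$ rather than $R_{0}W$ avoids the issue entirely, which is exactly why the paper takes that route.

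Your Steps (ii) and (iii) are reasonable in outline and partly more explicit than the paper (which simply cites the invertibility on $\dot Y^{*}$ from \cite{DAncona20} to get injectivity, rather than running a bootstrap and a Sommerfeld-type argument), but they inherit the defect of Step (i). In particular, the approximation scheme in Step (ii) would need convergence in the weighted norm $\ell^{2n/(n+3)}L^{2n/3}$ for $a|x|^{1/2}$, not just in $L^{n}\cap L^{n+\epsilon}$, and this is not addressed. The fix is structural: prove compactness of $WR_{0}$ on $L^{2n/(n+3)}$ following the paper's three-factor decomposition for the $R_{1}$ piece, and Sobolev plus compact embedding for the $R_{2}$ piece, then transfer by duality.
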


\begin{proof}[Proof of Lemma \ref{B:I+R0W}]
We shall prove that $R_{0}W$ 
is a compact operator on 
$L^{\frac{2n}{n-3}}$ and use Fredholm theory.
Equivalently, we can prove that 
$WR_{0}=a \cdot \partial R_{0}+bR_{0}$ is compact on 
$L^{\frac{2n}{n+3}}$.

To prove compactness of $a \cdot\partial R_{0}$ we split
$R_0(z)=R_1(z)+R_2(z)$ and handle the two terms separately.
By Sobolev embedding, the operator $\partial R_{2}$ is bounded
from $L^{\frac{2n}{n+3}}$ into $L^{\frac{2n}{n+1}}$ and
by compact embedding it is compact from $L^{\frac{2n}{n+3}}$ 
into $L^{\frac{2n}{n+1}-\epsilon}(|x|<R)$ for any
$\epsilon>0$ small and any $R>0$. 
If $a\in L^{n+\epsilon}(|x|<R)$ for some $\epsilon>0$
by H\"{o}lder inequality this implies that, for $z\neq0$,
\begin{equation*}
    a \cdot \partial R_{2}:L^{\frac{2n}{n+3}}\to
    L^{\frac{2n}{n+3}}(|x|<R)
\end{equation*}
is a compact operator.
On the other hand we can write by Sobolev embedding
\begin{equation*}
   \|a \cdot\partial R_{2}\phi\|_{L^{\frac{2n}{n+3}}(|x|>R)}\le
   \|a\|_{L^{n}(|x|>R)}
   \|\partial R_{2}\phi\|_{L^{\frac{2n}{n+1}}}
   \lesssim
   \epsilon(R)
   \|\phi\|_{L^{\frac{2n}{n+3}}}
\end{equation*} 
where $ \epsilon(R)=\|a\|_{L^{n}(|x|>R)}\to0$ as $R\to+\infty$.
Thus if $a\in L^{n}\cap L^{n+\epsilon}$ for some $\epsilon>0$
we conclude by a diagonal procedure that $a \cdot \partial R_{2}$
is a compact operator on $L^{\frac{2n}{n+3}}$.
Exactly the same argument shows that $bR_{2}$ is compact on
$L^{\frac{2n}{n+3}}$ provided 
$b\in L^{\frac n2}\cap L^{\frac n2 +\epsilon}$.

Consider now $a \cdot \partial R_{1}$. We can write
$\partial R_{1}=\psi(D)\partial R_{1}$
for a suitable $\psi\in \mathscr{S}$ so that
\begin{equation*}
    a \cdot\partial R_{1}=
    a|x|^{1/2}\cdot |x|^{-1/2}\psi(D)|x|^{1/2}\cdot
    |x|^{-1/2}\partial R_{1}
\end{equation*}
We see that
$a|x|^{1/2}$ is bounded from 
$\ell^{\infty}L^{2}(|x|<R)$ to $L^{\frac{2n}{n+3}}(|x|<R)$
provided $a$ satisfies 
$a\in \ell^{\frac{2n}{n+3}}L^{\frac{2n}{3}}$.
By Lemma \ref{lem:berndyad}, $|x|^{-1/2}\psi(D)|x|^{1/2}$ 
is a compact operator
from $\ell^{\infty}L^{2}$ to $\ell^{\infty}L^{2}(|x|<R)$. 
Finally, by estimate \eqref{eq:estR1gut} 
$|x|^{-1/2}\partial R_{1}$ is bounded from
$\ell^{\infty}L^{2}$ to $L^\frac{2n}{n+3}$.
We conclude that
\begin{equation*}
  a \cdot\partial R_{1}:L^\frac{2n}{n+3}\to
  L^\frac{2n}{n+3}(|x|<R)
\end{equation*}
is a compact operator. Since we have
\begin{equation*}
  \|a \cdot\partial R_{1}\phi\|_{L^{\frac{2n}{n+3}}(|x|>R)}\le
   \||x|^{1/2}a\|_{\ell^{\frac{2n}{n+3}}L^{\frac{2n}{3}}}
   \||x|^{-1/2}\partial R_{1}\phi\|_{\ell^{\infty}L^{2}}\le
   \epsilon(R)
   \|\phi\|_{L^{\frac{2n}{n+3}}}
\end{equation*}
and $\epsilon(R)\to 0$ as $R\to+\infty$, we see
that $a \cdot \partial R_{1}$ is a compact operator
on $L^{\frac{2n}{n+3}}$.
By a similar but simpler argument one checks that
$bR_{1}$ is also a compact operator on the same space provided
$b\in L^{\frac n2}$.

Summing all the pieces, we have proved that $R_{0}W$ is a compact
operator on $L^{\frac{2n}{n-3}}$ for all $z\neq0$.
The same estimates show that $z \mapsto R_{0}W$
is continuous in the operator norm of bounded operators on 
$L^{\frac{2n}{n-3}}$ on the closed upper and lower complex planes,
minus the origin. Since by assumption $I+R_{0}W$
is an injective operator, Fredholm theory ensures that
$I+R_{0}W$ is invertible with bounded inverse, and
$z \mapsto (I+R_{0}W)^{-1}$ is a continuous map in the
operator norm (see e.g.~Lemma 3.4 in \cite{DAncona20}).
\end{proof}

\section{Appendix}\label{sec:hardy}

We prove three lemmas here. The first one is a uniform 
estimate for the integral operator
\begin{equation*}
  T_{\mu}f(r)
  =
  A_{\mu}(r)\int_r^\infty B_{\mu}(s)f(s) s^{n-1}ds
  +
  B_{\mu}(r)\int_0^r A_{\mu}(s)f(s) s^{n-1}ds
\end{equation*}
where
\begin{equation*}
  A_{\mu}(r)=r^{-\frac{n-2}{2}}J_{\mu}(r),
  \qquad
  B_{\mu}(r)=r^{-\frac{n-2}{2}}H_{\mu}^{(1)}(r).
\end{equation*}
In the proof we use ideas from \cite{BarceloRuizVega97-a}
and \cite{FrankSimon17-a}. 

\begin{lemma}\label{lem:weak-scalar}
  We have, uniformly for $\mu\ge(n-2)/2$,
  \begin{equation}\label{eq:scalar-endpoint}
    \|T_{\mu}f\|_{L^{\frac{2n}{n-1},\infty}(r^{n-1}dr)}
    \lesssim
    \|f\|_{L^{\frac{2n}{n+1},1}(r^{n-1}dr)} .
  \end{equation}
\end{lemma}

\begin{proof}
  Write for brevity
  \begin{equation*}
  \textstyle
    p=\frac{2n}{n+1},
    \qquad
    q=p'=\frac{2n}{n-1},
    \qquad
    d\nu(r)=r^{n-1}\,dr.
  \end{equation*}
  It is enough to estimate the Volterra half
  \begin{equation*}
    T_\mu^+f(r)
    =
    A_\mu(r)\int_r^\infty B_\mu(s)f(s)\,d\nu(s).
  \end{equation*}
  Indeed, the second
  term in $T_{\mu}$ is the transpose of $T^{+}_{\mu}$, 
  up to complex conjugation of the Hankel function; since
  $|\overline{H_\mu^{(1)}}|=|H_\mu^{(1)}|$, the same estimate
  applies and Lorentz duality gives the bound for the transposed
  operator.  For $T_\mu^+$, Lorentz duality gives
  \begin{equation*}
    \left|\int_r^\infty B_{\mu}(s)f(s)\,d\nu(s)\right|
    \lesssim
    M_{\mu}^{+}(r)\|f\|_{L^{p,1}(d\nu)}
  \end{equation*}
  where $M_{\mu}^{+}$ is the outgoing tail function
  \begin{equation*}
    M_{\mu}^{+}(r)
    =
    \|B_{\mu}\mathbf{1}_{(r,\infty)}\|_{L^{q,\infty}(d\nu)} .
  \end{equation*}
  Hence it remains to prove
  \begin{equation}\label{eq:tail-plus}
    \sup_{\mu\ge(n-2)/2}
      \|A_{\mu}M_{\mu}^{+}\|_{L^{q,\infty}(d\nu)}
    <\infty .
  \end{equation}

  The standard asymptotics 
  $J_{\mu}\simeq_{\mu} r^{\mu}$, $H_{\mu}\simeq_{\mu} r^{-\mu}$
  (with $H_{0}\simeq \log r$) for small $r$,
  and $|J_{\mu}|+|H_{\mu}|\lesssim_{\mu} r^{-1/2}$ for large $r$,
  are sufficient to obtain \eqref{eq:tail-plus}
  for \emph{bounded} $\mu$.
  The problem is to get a uniform bound in $\mu$ for large $\mu$.
  Thus we can assume $\mu\ge \frac 12$, and in addition
  that $\mu$ is large enough for the intervals below to have the
  indicated order.  Set
  \begin{equation*}
    \rho=n-1-\frac{q(n-2)}2=\frac1{n-1}.
  \end{equation*}
  This is precisely the borderline value for which
  \begin{equation*}
    \frac q2=\rho+1 .
  \end{equation*}
  Besides standard estimates for Bessel functions,
  we use the sharper estimates from
  Proposition~A.2 in \cite{FrankSimon17-a},
  which quotes Barcelo--Ruiz--Vega \cite{BarceloRuizVega97-a}.
  Let $\alpha_0\in(0,1/2)$ be the constant in that Proposition,
  and decompose $(0,\infty)$ into
  \begin{align*}
    I_0&=(0,1),\\
    I_1&=(1,\mu\,\mathrm{sech}\,\alpha_0),\\
    I_2&=(\mu\,\mathrm{sech}\,\alpha_0,
      \mu-\mu^{1/3}),\\
    I_3&=(\mu-\mu^{1/3},\mu+\mu^{1/3}),\\
    I_4&=(\mu+\mu^{1/3},\infty).
  \end{align*}
  Put $R_\mu=\mu-\mu^{1/3}$.

  We first record the estimate obtained
  from the proof of \cite{FrankSimon17-a}, Lemma~A.3:
  \begin{equation}\label{eq:single-preturn}
    \|A_\mu\mathbf{1}_{I_1\cup I_2}\|_{L^{q,\infty}(d\nu)}
    \le C.
  \end{equation}
  On the turning interval $I_3$, one has
  \begin{equation*}
    |J_\mu(r)|+|H_\mu(r)|\le C\mu^{-1/3},
    \qquad |r-\mu|\le\mu^{1/3}.
  \end{equation*}
  Thus $|A_\mu|+|B_\mu|\lesssim
  \mu^{-(n-2)/2-1/3}$ on an interval of $d\nu$--measure
  $\simeq\mu^{n-2/3}$, and hence
  \begin{equation}\label{eq:single-turning}
    \|A_\mu\mathbf{1}_{I_3}\|_{L^{q,\infty}(d\nu)}
    +
    \|B_\mu\mathbf{1}_{I_3}\|_{L^{q,\infty}(d\nu)}
    \le C.
  \end{equation}
  On $I_4$ we use the standard oscillatory bound
  \begin{equation}\label{eq:postturn-bessel}
    |J_\mu(r)|+|H_\mu(r)|
    \le
    C r^{-1/4}(r-\mu)^{-1/4}.
  \end{equation}
  Split
  \begin{equation*}
    I_4=I_4^{\mathrm{near}}\cup I_4^{\mathrm{far}},
    \qquad
    I_4^{\mathrm{near}}=(\mu+\mu^{1/3},2\mu),
    \qquad
    I_4^{\mathrm{far}}=(2\mu,\infty).
  \end{equation*}
  On $I_4^{\mathrm{far}}$ one has $r-\mu\simeq r$, hence
  \begin{equation*}
    |A_\mu(r)|+|B_\mu(r)|
    \lesssim r^{-\frac{n-1}{2}} .
  \end{equation*}
  Since $\frac{n-1}{2}q=n$,
  \begin{equation*}
    \left\|
      r^{-\frac{n-1}{2}}\mathbf{1}_{(1,\infty)}
    \right\|_{L^{q,\infty}(r^{n-1}dr)}
    \le C.
  \end{equation*}
  The same bound then holds on any smaller tail $(R,\infty)$,
  $R\ge1$, by restriction.  On $I_4^{\mathrm{near}}$ write
  $\tau=r-\mu$.  Then
  $r\simeq\mu$, $d\nu(r)\simeq\mu^{n-1}d\tau$, and
  \eqref{eq:postturn-bessel} gives
  \begin{equation*}
    |A_\mu(r)|+|B_\mu(r)|
    \lesssim
    \mu^{-\frac{n-2}{2}-\frac14}\tau^{-1/4},
    \qquad
    \mu^{1/3}<\tau<\mu .
  \end{equation*}
  Therefore, using $q\le4$,
  \begin{align*}
    \left\|
      (|A_\mu|+|B_\mu|)
      \mathbf{1}_{I_4^{\mathrm{near}}}
    \right\|_{L^{q,\infty}(d\nu)}
    &\lesssim
    \mu^{-\frac{n-2}{2}-\frac14}
    \mu^{\frac{n-1}{q}}
    \left\|
      \tau^{-1/4}\mathbf{1}_{(\mu^{1/3},\mu)}
    \right\|_{L^{q,\infty}(d\tau)}\\
    &\lesssim
    \mu^{-\frac{n-2}{2}-\frac14}
    \mu^{\frac{n-1}{q}}
    \mu^{\frac1q-\frac14}
    =
    \mu^{-\frac{n-1}{2}+\frac nq}
    =1 .
  \end{align*}
  Hence
  \begin{equation}\label{eq:weak-postturn}
    \|A_\mu\mathbf{1}_{I_4}\|_{L^{q,\infty}(d\nu)}
    +
    \|B_\mu\mathbf{1}_{I_4}\|_{L^{q,\infty}(d\nu)}
    \le C .
  \end{equation}
  In particular, we have proved that
  \begin{equation}\label{eq:B-tail-after-R}
    \|B_\mu\mathbf{1}_{(R_\mu,\infty)}\|
      _{L^{q,\infty}(d\nu)}
    \le C .
  \end{equation}

  We now estimate $A_\mu M_\mu^+$ on the output intervals.
  On $I_0$ the bounds
  \begin{equation*}
    |J_\mu(r)|
    \le C\frac{(r/2)^\mu}{\Gamma(\mu+1)},
    \qquad
    |H_\mu^{(1)}(r)|
    \le C\frac{\Gamma(\mu)}{(r/2)^\mu}
  \end{equation*}
  control the part of $M_\mu^+$ in $(r,1)$; the remaining tail
  must be estimated separately.  We use the classical
  monotonicity for $\mu\ge1/2$ of
  \begin{equation*}
    s\mapsto s|H_\mu^{(1)}(s)|^2
  \end{equation*}
  which is nonincreasing.  Thus for $s\ge1$,
  \begin{equation*}
    |H_\mu^{(1)}(s)|
    \le s^{-1/2}|H_\mu^{(1)}(1)|
    \le C2^\mu\Gamma(\mu)s^{-1/2}.
  \end{equation*}
  Recalling that
  $\frac nq=\frac{n-1}{2}$ and  $r^{-\mu+1/2}\ge1$
  for $0<r\le1$, it follows that
  \begin{align*}
    M_\mu^+(r)
    &\le
    C2^\mu\Gamma(\mu)
    \|
      s^{-\mu-\frac{n-2}{2}}\mathbf{1}_{(r,1)}
    \|_{L^{q,\infty}(d\nu)}
    +
    C2^\mu\Gamma(\mu)
    \|
      s^{-\frac{n-1}{2}}\mathbf{1}_{(1,\infty)}
    \|_{L^{q,\infty}(d\nu)}\\
    &\le
    C\,2^\mu\Gamma(\mu)\,r^{-\mu+\frac12}.
  \end{align*}
  Multiplying by $A_\mu(r)$ gives
  \begin{equation}\label{eq:origin-model}
    |A_\mu(r)|M_\mu^+(r)
    \le C r^{-\frac{n-3}{2}}
  \end{equation}
  and the right hand side belongs to $L^{q,\infty}((0,1),d\nu)$,
  uniformly in $\mu$.

  We next estimate $A_\mu M_\mu^+$ on $I_1\cup I_2$. Put
  \begin{equation*}
    N_\mu^+(r)=
    \|B_\mu\mathbf{1}_{(r,R_\mu)}\|_{L^{q,\infty}(d\nu)} .
  \end{equation*}
  The tail of $M_\mu^+$ beyond $R_\mu$ is harmless by
  \eqref{eq:B-tail-after-R} and \eqref{eq:single-preturn}.
  Thus it remains to estimate $A_\mu N_\mu^+$.
  Monotonicity of $s|H_\mu(s)|^2$ gives, for $s\ge r$,
  \begin{equation}\label{eq:Hmon-tail}
    |B_\mu(s)|
    \le
    r^{1/2}|H_\mu(r)|s^{-\frac{n-1}{2}} .
  \end{equation}
  On $I_1$ this immediately implies
  $N_\mu^+(r)\le C r^{1/2}|H_\mu(r)|$, since
  $s^{-(n-1)/2}$ has bounded weak $L^q(d\nu)$ norm on every
  tail.  Using the product estimate from
  Proposition~A.2 in \cite{FrankSimon17-a},
  \begin{equation*}
    |J_\mu(r)H_\mu(r)|\le C\mu^{-1},
    \qquad r\le\mu\operatorname{sech}\alpha_0,
  \end{equation*}
  we obtain
  \begin{equation*}
    |A_\mu(r)|N_\mu^+(r)
    \le
    C\mu^{-1}r^{-\frac{n-3}{2}},
    \qquad r\in I_1.
  \end{equation*}
  Consequently
  \begin{equation*}
    \|A_\mu N_\mu^+\mathbf{1}_{I_1}\|_{L^{q,\infty}(d\nu)}
    \le
    C\mu^{-1}
    \left(\int_1^{C\mu}
      r^{n-1-\frac{q(n-3)}2}\,dr
    \right)^{1/q}
    \le C,
  \end{equation*}
  because $n-q(n-3)/2=q$.

  On $I_2$ the crude tail bound above loses a power. One must
  also use the finite length of the remaining pre--turning tail.
  Write $L=\mu-r$.  If $r<s<R_\mu$, then $r\simeq s\simeq\mu$
  and $s-r\le L$, so \eqref{eq:Hmon-tail} gives
  \begin{equation*}
    N_\mu^+(r)
    \le
    C\mu^{-\frac{n-2}{2}}
      \mu^{\frac{n-1}{q}}L^{1/q}|H_\mu(r)| .
  \end{equation*}
  The transition product estimate
  \begin{equation*}
    |J_\mu(r)H_\mu(r)|
    \le
    C\mu^{-1/2}(\mu-r)^{-1/2},
    \qquad
    \mu\operatorname{sech}\alpha_0<r<R_\mu,
  \end{equation*}
  therefore yields
  \begin{equation*}
    |A_\mu(r)|N_\mu^+(r)
    \le
    C\mu^{-n+\frac32+\frac{n-1}{q}}
    L^{\frac1q-\frac12}.
  \end{equation*}
  Since $d\nu(r)\simeq\mu^{n-1}dL$ on $I_2$ and
  $q\le4$,
  \begin{align*}
    \|A_\mu N_\mu^+\mathbf{1}_{I_2}\|_{L^{q,\infty}(d\nu)}
    &\le
    C
    \mu^{-n+\frac32+\frac{n-1}{q}}
    \mu^{\frac{n-1}{q}}
    \left\|
      L^{\frac1q-\frac12}
      \mathbf{1}_{(\mu^{1/3},C\mu)}
    \right\|_{L^{q,\infty}(dL)}\\
    &\le
    C
    \mu^{-n+1+\frac{2n}{q}}
    =C .
  \end{align*}
  This proves the pre--turning contribution to
  \eqref{eq:tail-plus}.

  In order to estimate $A_\mu M_\mu^+$ on $I_3$, 
  no monotonicity is needed and
  we use only the single function bounds already recorded.
  If $r\in I_3$, then
  \begin{equation*}
    M_\mu^+(r)
    \le
    \|B_\mu\mathbf{1}_{I_3}\|_{L^{q,\infty}(d\nu)}
    +
    \|B_\mu\mathbf{1}_{I_4}\|_{L^{q,\infty}(d\nu)}
    \le C
  \end{equation*}
  by \eqref{eq:single-turning} and \eqref{eq:weak-postturn}.
  Therefore, by \eqref{eq:single-turning},
  \begin{equation*}
    \|A_\mu M_\mu^+\mathbf{1}_{I_3}\|_{L^{q,\infty}(d\nu)}.
    \le C
  \end{equation*}

  Finally, to estimate $A_\mu M_\mu^+$ on $r\in I_4$, we write
  \begin{equation*}
    M_\mu^+(r)
    \le
    \|B_\mu\mathbf{1}_{I_4}\|_{L^{q,\infty}(d\nu)}
    \le C,
  \end{equation*}
  and hence, by \eqref{eq:weak-postturn},
  \begin{equation*}
    \|A_\mu M_\mu^+\mathbf{1}_{I_4}\|_{L^{q,\infty}(d\nu)}.
    \le C
  \end{equation*}
  Combining \eqref{eq:origin-model}, the pre--turning estimates,
  \eqref{eq:single-turning}, and \eqref{eq:weak-postturn} proves
  \eqref{eq:tail-plus}, and this concludes the proof.
\end{proof}

The second lemma is a simple tool to handle mixed norms of
spherical harmonics decompositions:

\begin{lemma}[Hilbert-valued Lorentz summation]\label{lem:hilbert}
  Let $1<p\le2$, $q=p'$, and let $S_j$ be linear operators
  satisfying
  \begin{equation*}
    \|S_j f\|_{L^{q,\infty}(Y)}
    \le C\|f\|_{L^{p,1}(X)}
  \end{equation*}
  with $C$ independent of $j$.  Then the diagonal operator
  $S(\{f_j\})=\{S_jf_j\}$ satisfies
  \begin{equation*}
    \bigl\|
      \bigl(\sum_j |S_jf_j|^2\bigr)^{1/2}
    \bigr\|_{L^{q,\infty}(Y)}
    \le
    C_p C
    \bigl\|
      \bigl(\sum_j |f_j|^2\bigr)^{1/2}
    \bigr\|_{L^{p,1}(X)} .
  \end{equation*}
\end{lemma}

\begin{proof}
  By Lorentz duality, each $S_j$ satisfies
  \begin{equation*}
    |\langle S_j f,g\rangle|
    \le
    C\|f\|_{L^{p,1}(X)}\|g\|_{L^{p,1}(Y)} .
  \end{equation*}
  Therefore, for finite sequences,
  \begin{align*}
    \bigl|\sum_j\langle S_jf_j,g_j\rangle\bigr|
    &\le
    C
    \bigl(\sum_j\|f_j\|_{L^{p,1}}^2\bigr)^{1/2}
    \bigl(\sum_j\|g_j\|_{L^{p,1}}^2\bigr)^{1/2}.
  \end{align*}
  Since $1<p\le2$, the Lorentz space $L^{p,1}$ is
  $p$-convex, and in particular
  \begin{equation*}
    \bigl(\sum_j\|f_j\|_{L^{p,1}}^2\bigr)^{1/2}
    \le
    C_p
    \bigl\|
      \bigl(\sum_j|f_j|^2\bigr)^{1/2}
    \bigr\|_{L^{p,1}} .
  \end{equation*}
  The same estimate applies to $g_j$. A second use of Lorentz
  duality for the corresponding $\ell^2$-valued spaces gives the
  claim.
\end{proof}

The last lemma is a Hardy type estimate in weighted dyadic spaces.

\begin{lemma}[]\label{lem:hardy}
  Let $n\ge3$, $\sigma_{1}<\frac n2-1$, and
  $2\sigma_{2}<n-2-2\sigma_{1}$.
  Then we have the estimates
  \begin{equation}\label{eq:hardy1}
    \||x|^{-\sigma_{1}-1}\bra{x}^{-\sigma_{2}}\phi\|_{L^{2}}
    \lesssim
    \||x|^{-\sigma_{1}}\bra{x}^{-\sigma_{2}}\partial \phi\|_{L^{2}},
  \end{equation}
  \begin{equation}\label{eq:hardy2}
    \||x|^{-\sigma_{1}-1}\bra{x}^{-\sigma_{2}}\phi\|
      _{\ell^{\infty}L^{2}}
    \lesssim
    \||x|^{-\sigma_{1}}\bra{x}^{-\sigma_{2}}\partial \phi\|
      _{\ell^{\infty}L^{2}}.
  \end{equation}
\end{lemma}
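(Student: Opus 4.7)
I would prove both estimates by integration by parts against the radial vector field $V(x) = F(x)\,x$ with $F(x) = |x|^{-2\sigma_1-2}\bra{x}^{-2\sigma_2}$. A direct computation gives
\[
\nabla\cdot V = (n-2-2\sigma_1-2\sigma_2)\,F + 2\sigma_2\,|x|^{-2\sigma_1-2}\bra{x}^{-2\sigma_2-2},
\]
where the first coefficient is strictly positive by the hypothesis $2\sigma_2 < n-2-2\sigma_1$. Integrating $\int|\phi|^2\nabla\cdot V = -2\,\text{Re}\int V\cdot\bar\phi\,\nabla\phi$ and applying Cauchy--Schwarz yields \eqref{eq:hardy1}, once the second summand of $\nabla\cdot V$ is treated: for $\sigma_2\ge 0$ it has the favourable sign on the LHS and may simply be discarded, leaving the positive coefficient $n-2-2\sigma_1-2\sigma_2$; for $\sigma_2<0$ one uses $\bra{x}^{-2}\le 1$ to bound the second summand by a multiple of $F$ and absorbs on the left, which works precisely thanks to the remaining positive coefficient $n-2-2\sigma_1>0$ (the assumption $\sigma_1<\tfrac{n}{2}-1$).

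For the dyadic version \eqref{eq:hardy2} I would pass to polar coordinates, setting $\Phi(r) = \|\phi(r\cdot)\|_{L^2(S^{n-1})}$ and $\Psi(r) = \|\nabla\phi(r\cdot)\|_{L^2(S^{n-1})}$, so that $|\Phi'(r)|\le\Psi(r)$ by Minkowski. With $a = n-2\sigma_1-2 > 0$, one has $\||x|^{-\sigma_1-1}\bra{x}^{-\sigma_2}\phi\|_{L^2(C_j)}^2 = \int_{2^j}^{2^{j+1}} r^{a-1}\bra{r}^{-2\sigma_2}\Phi(r)^2\,dr$ and analogously for the right-hand side with $r^{a+1}\bra{r}^{-2\sigma_2}\Psi^2$. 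Using the representation $\Phi(r) = -\int_r^\infty \Phi'(s)\,ds$ (valid under the mild assumption that $\phi$ decays at infinity, without which the inequality is trivially $\infty\le\infty$), I bound $|\Phi(r)|\le\int_r^\infty\Psi(s)\,ds$, split the tail dyadically, and apply Cauchy--Schwarz on each shell $[2^k,2^{k+1}]$ with the weight $s^{a+1}\bra{s}^{-2\sigma_2}$. Each piece is bounded by $c_k\,M$ where $M$ is the RHS of \eqref{eq:hardy2} and
\[
c_k := \Big(\int_{2^k}^{2^{k+1}} s^{-a-1}\bra{s}^{2\sigma_2}\,ds\Big)^{1/2} \simeq 2^{-ka/2}\bra{2^k}^{\sigma_2}.
\]

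The hypothesis $2\sigma_2 < a$ makes $\sum_{k\ge j}c_k$ a convergent geometric series of sum $\lesssim 2^{-ja/2}\bra{2^j}^{\sigma_2}$, yielding the pointwise estimate $\Phi(r)\lesssim 2^{-ja/2}\bra{2^j}^{\sigma_2}\,M$ on $[2^j,2^{j+1}]$; inserting this into the LHS of \eqref{eq:hardy2} the powers of $2^j$ and $\bra{2^j}$ cancel exactly and the claim follows. The main technical obstacle is the dyadic bookkeeping across the transition $r\simeq 1$, since $\bra{r}$ behaves differently in the two regimes $r\ge 1$ and $r\le 1$; one must verify that $\sum_{k\ge j}c_k \lesssim 2^{-ja/2}\bra{2^j}^{\sigma_2}$ holds uniformly for $j\in\mathbb{Z}$, and this is precisely where the sharp condition $2\sigma_2 < n-2-2\sigma_1$ enters, securing geometric decay of $c_k$ as $k\to+\infty$.
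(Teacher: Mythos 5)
Your proof is correct, and it takes a genuinely different route on both steps. For \eqref{eq:hardy1} the paper's argument is indirect: it invokes the scalar Hardy inequality $\||x|^{-\sigma_{1}-1}u\|_{L^{2}}\le\tfrac{2}{n-2-2\sigma_{1}}\||x|^{-\sigma_{1}}\partial u\|_{L^{2}}$ applied to $u=\bra{x}^{-\sigma_{2}}\phi$, expands $\partial u$ by the Leibniz rule, and then absorbs the error term $\sigma_{2}\bra{x}^{-\sigma_{2}}|x|^{-1}|\phi|$ into the left-hand side, which is where the condition $2\sigma_{2}<n-2-2\sigma_{1}$ is used. Your direct integration by parts against $V=Fx$ reaches the same conclusion without invoking the scalar Hardy inequality as a black box, and it also treats the two signs of $\sigma_{2}$ cleanly (the paper's Leibniz-plus-absorption argument as written is really tuned to $\sigma_{2}\ge0$, which is the only case it actually uses). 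For \eqref{eq:hardy2} the paper's argument is very short: \eqref{eq:hardy1} is a weighted $\ell^{2}L^{2}$ estimate, one writes it at two slightly shifted values $\sigma_{1}\pm\epsilon$ and applies the Bergh--L\"ofstr\"om real interpolation identity $(\ell^{2}(2^{-ja_{0}})L^{2},\ell^{2}(2^{-ja_{1}})L^{2})_{1/2,\infty}=\ell^{\infty}(2^{-ja})L^{2}$ exactly as in the proof of Lemma~\ref{lem:berndyad}. Your polar-coordinate reduction with the dyadic Cauchy--Schwarz on the tail integral $\int_{r}^{\infty}\Psi$ is more elementary and more explicit, and it makes transparent where the sharp condition $a>2\sigma_{2}$ enters (geometric decay of the $c_{k}$ and the uniform-in-$j$ bound $\sum_{k\ge j}c_{k}\lesssim 2^{-ja/2}\bra{2^{j}}^{\sigma_{2}}$, which indeed survives the transition across $j=0$); the trade-off is more bookkeeping compared with the one-line interpolation step. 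Both routes are valid, and they hinge on the same two hypotheses $\sigma_{1}<\tfrac{n}{2}-1$ and $2\sigma_{2}<n-2-2\sigma_{1}$.
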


\begin{proof}%[Proof of ...]
  We recall the well known Hardy estimate
  \begin{equation*}
    \textstyle
    \||x|^{-\sigma_{1}-1}u\|_{L^{2}}\le
    \frac{2}{n-2-2 \sigma_{1}}\||x|^{-\sigma_{1}}\partial u\|_{L^{2}},
    \qquad
    \sigma_{1}<\frac n2-1.
  \end{equation*}
  We apply this to $u=\bra{x}^{-\sigma_{2}}\phi$; since
  \begin{equation*}
    \textstyle
    |\partial u|=|\bra{x}^{-\sigma_{2}}\partial\phi-
      \sigma_{2} \bra{x}^{-\sigma_{2}-2}x \phi|\le 
    \bra{x}^{-\sigma_{2}}|\partial\phi|+
    \sigma_{2} \bra{x}^{-\sigma_{2}}|x|^{-1}|\phi|,
  \end{equation*}
  we obtain
  \begin{equation*}
    \textstyle
    \||x|^{-\sigma_{1}-1}\bra{x}^{-\sigma_{2}}\phi\|_{L^{2}}\le
    \frac{2}{n-2-2 \sigma_{1}}
    \||x|^{-\sigma_{1}}\bra{x}^{-\sigma_{2}}\partial \phi\|_{L^{2}}+
    \frac{2 \sigma_{2}}{n-2-2 \sigma_{1}}
    \||x|^{-\sigma_{1}-1}\bra{x}^{-\sigma_{2}}\phi\|_{L^{2}}
  \end{equation*}
  and absorbing the last term at the LHS we get \eqref{eq:hardy1}.
  Interpolating between two instances of \eqref{eq:hardy1} with
  two close different values of $\sigma_{1}$ as in the proof of
  Lemma \ref{lem:berndyad}, we get \eqref{eq:hardy2}.
\end{proof}

% b_f_pos
%%% >>> BIBLATEX:
% \printbibliography
%%% >>> BIBTEX: (cancellare \usepackage{biblatex})
% \bibliography{magneticURE.bib}
% \bibliographystyle{abbrv}
%%% >>> entrambe si possono usare contemporaneamente a:

\end{document}